\documentclass[a4paper,12pt]{article}
\usepackage[left=2cm,right=2cm, top=2cm,bottom=3cm,bindingoffset=0cm]{geometry}
\usepackage{cmap}
\usepackage[T2A]{fontenc}
\usepackage[cp1251]{inputenc}
\usepackage{verbatim}
\usepackage{amsmath}
\usepackage{amsthm}
\usepackage{amssymb}
\usepackage{delarray}
\usepackage{cite}
\usepackage{hyperref}
\usepackage{mathrsfs}
\usepackage{tikz}
\usetikzlibrary{patterns}
\usepackage{caption}
\newcommand{\la}{\lambda}
\newcommand{\ga}{\gamma}

\DeclareCaptionLabelSeparator{dot}{. }
\captionsetup{justification=centering,labelsep=dot}

\theoremstyle{plain}

\numberwithin{equation}{section}

\newtheorem{thm}{Theorem}[section]
\newtheorem{lem}[thm]{Lemma}
\newtheorem{prop}[thm]{Proposition}
\newtheorem{cor}[thm]{Corollary}

\theoremstyle{definition}

\newtheorem{ip}[thm]{Inverse Problem}

\newtheorem{notation}{Notation}

\theoremstyle{remark}

\sloppy \allowdisplaybreaks

\begin{document}
\begin{center}
{\large\bf Inverse Sturm-Liouville problem with polynomials in the boundary \\[0.2cm] condition and multiple eigenvalues}
\\[0.2cm]
{\bf Egor E. Chitorkin, Natalia P. Bondarenko} \\[0.2cm]
\end{center}

\vspace{0.5cm}

{\bf Abstract.} In this paper, the inverse Sturm-Liouville problem with distribution potential and with polynomials of the spectral parameter in one of the boundary conditions is considered. We for the first time prove local solvability and stability of this inverse problem in the general non-self-adjoint case, taking possible splitting of multiple eigenvalues into account. The proof is based on the reduction of the nonlinear inverse problem to a linear equation in the Banach space of continuous functions on some circular contour. Moreover, we introduce the generalized Cauchy data, which will be useful for investigation of partial inverse Sturm-Liouville problems with polynomials in the boundary conditions. Local solvability and stability of recovering the potential and the polynomials from the generalized Cauchy data are obtained. Thus, the results of this paper include the first existence theorems for solution of the inverse Sturm-Liouville problems with polynomial dependence on the spectral parameter in the boundary conditions in the case of multiple eigenvalues. In addition, our stability results can be used for justification of numerical methods.

\medskip

{\bf Keywords:} inverse Sturm-Liouville problems; polynomials in the boundary conditions; multiple eigenvalues; local solvability; stability; generalized Cauchy data.

\medskip

{\bf AMS Mathematics Subject Classification (2020):} 34A55 34B07 34B09 34B24 34L40    

\vspace{1cm}

\section{Introduction} \label{sec:intro}

This paper deals with the Sturm-Liouville problem
\begin{gather} \label{eqv}
-y'' + q(x) y = \lambda y, \quad x \in (0, \pi), \\ \label{bc}
y^{[1]}(0) = 0, \quad r_1(\la)y^{[1]}(\pi) + r_2(\la)y(\pi) = 0,
\end{gather}
where $q(x)$ is a complex-valued distribution potential of the class $W_2^{-1}(0,\pi)$, that is, $q = \sigma'$, $\sigma \in L_2(0,\pi)$,
$\la$ is the spectral parameter, $r_1(\la)$ and $r_2(\la)$ are relatively prime polynomials, $y^{[1]} := y' - \sigma(x) y$ is the so-called quasi-derivative. 

We study the inverse problem that consists in the recovery of $\sigma(x)$, $r_1(\lambda)$, and $r_2(\lambda)$ from the spectral data.
This paper aims to prove local solvability and stability of the inverse problem in the general case of complex-valued potential and multiple eigenvalues.


Inverse problems of the spectral analysis consist in recovering a specific differential operator, such as the Sturm-Liouvillle operator, from information about its spectrum. Such problems find their applications in different fields of science and engineering, particularly, in quantum and classical mechanics, geophysics, electronics, chemistry, nanotechnology.

Inverse spectral theory for the Sturm-Liouville operators with constant coefficients in the boundary conditions has been comprehensively developed (see the monographs \cite{Mar77, Lev84, FY01, Krav20}). There are also numerous studies on eigenvalue problems with polynomials which depend on the spectral parameter in the boundary conditions. Such problems arise in practical applications such as investigation of diffusion processes or of electrical circuit problems (see \cite{Ful77, Ful80} and references therein).

Inverse Sturm-Liouville problems with polynomials of the spectral parameter in the boundary conditions have been investigated in \cite{Chu01, BindBr021, BindBr022, ChFr, FrYu, FrYu12, YangWei18, Gul19, Gul20-ams, Gul23, Chit}. Some of these studies focus on self-adjoint problems that contain rational Herglotz-Nevanlinna functions in the boundary conditions (see \cite{BindBr021, BindBr022, YangWei18, Gul19, Gul20-ams}). Obviously, such problems can be reduced to problems with polynomial dependence on the spectral parameter. In contrast to the previously mentioned studies, this paper focuses on the general non-self-adjoint case. A constructive solution for the inverse non-self-adjoint Sturm-Liouville problem with boundary conditions containing polynomials of the spectral parameter has been developed by Freiling and Yurko \cite{FrYu, FrYu12} using the method of spectral mappings. However, Freiling and Yurko considered only regular (integrable) potentials.

In recent years, spectral analysis of differential operators with singular coefficients being generalized functions (distributions) has garnered significant interest among mathematicians (ss \cite{Gul19, SavShkal03, Hry03, Sav10, FrIgYu, Dj, BondTamkang, ChitBond} and other studies). Some properties of the spectrum and solutions of differential equations with singular coefficients have been explored in \cite{SavShkal03, Hry03, Sav10}. The most comprehensive results in the inverse problem theory have been obtained for the Sturm-Liouville operators with singular potentials (see \cite{SavShkal03, Hry03, Sav10, FrIgYu, Dj, BondTamkang, ChitBond}). Specifically, Hryniv and Mykytyuk \cite{Hry03} extended several classical results to the case of potentials from $W^{-1}_2$. Savchuk and Shkalikov proved the uniform stability of the inverse Sturm-Liouville problems with potentials of $W^{\alpha}_2$, where $\alpha > -1$ (see \cite{Sav10}). The method of spectral mappings was transferred to the case of distribution potentials in \cite{FrIgYu, BondTamkang}. In \cite{ChitBond}, the ideas of this method have been developed by the authors for constructive solution of an inverse problem with polynomials in the boundary conditions.

Investigation of local solvability and stability for various classes of inverse Sturm-Liouville problems was carried out in \cite{Borg46, BSY13, BondButTr17, BK19, Bond20, BondGaidel, GMXA23, ChitBondStab} and other studies. Local solvability is a crucial property of inverse problems, particularly in the cases when proving global solvability is challenging. Stability plays a vital role in justifying numerical methods. Guliyev \cite{Gul19} made a significant contribution to investigation of the global solvability for inverse problems with polynomials in boundary conditions in the self-adjoint case. However, there was a lack of such studies for the non-self-adjoint case. In \cite{ChitBondStab}, the authors have proved local solvability and stability for the case of simple eigenvalues. However, in general, eigenvalues of the problem \eqref{eqv}--\eqref{bc} can be multiple. Moreover, multiple eigenvalues can split into smaller groups under perturbations of the spectrum, and such splitting causes significant difficulties in investigation of local solvability and stability. For the standard Sturm-Liouville problem in the non-self-adjoint case, these issues were considered in \cite{BSY13} but only for multiple eigenvalues remaining unperturbed. Recently, Buterin and Kuznetsova \cite{BK19} generalized the classical result of Borg \cite{Borg46} for the two spectra inverse problem to the non-self-adjoint case, taking splitting of multiple eigenvalues into account. Local solvability and stability for the inverse problem by the generalized spectral data (eigenvalues and generalized weight numbers) have been investigated by Bondarenko \cite{Bond20} for the Sturm-Liouville operators and by Bondarenko and Gaidel \cite{BondGaidel} for quadratic differential pencils. The approach of \cite{Bond20, BondGaidel} relies on the fact that the number of multiple eigenvalues is finite. Therefore, it is convenient to consider some rational function on a circular contour in the complex $\la$-plane instead of a finite part of the spectral data.

The goal of this paper is to prove local solvability and stability for the inverse Sturm-Liouville problem with polynomials in the boundary condition in the most general case, taking splitting of multiple eigenvalues into account. Our approach is based on the constructive solution of the inverse problem by the method of spectral mappings from our previous study \cite{ChitBond}, which develops the ideas of \cite{FY01, FrYu, BondTamkang}. This method reduces an inverse problem to a linear equation in a suitable Banach space. In this paper, we first consider small perturbations of a finite part of the spectral data, which can contain multiplicities. Then, we get the main linear equation in the space of continuous functions on some contour. Investigation of properties for the main equation and its solution leads to local solvability and stability of the inverse problem. An important role in our technique is played by reconstruction formulas for the coefficient $\sigma(x)$ and especially for the polynomials $r_1(\la)$ and $r_2(\la)$. For some special cases, such reconstruction formulas were derived in \cite{ChitBond, ChitBondStab}. In this paper, we obtain reconstruction formulas for the case of multiple eigenvalues, which requires a new complicated technique. Finally, we combine our results for a finite perturbation of the spectral data with the previous results of \cite{ChitBondStab} for simple eigenvalues, and so arrive at local solvability and stability in the general case.

In addition, we consider the inverse problem by the so-called generalized Cauchy data. For the classical Sturm-Liouville equation, Rundell and Sacks \cite{RS92} discovered that, for numerical reconstruction of the classical Sturm-Liouville operator, it is convenient to use the Cauchy data of the corresponding wave equation, which are equivalent to the Weyl function or the two spectra. Later on, those reconstruction techniques were developed for the inverse transmission eigenvalue problem (see \cite{MPS94, MSS97}).
Local solvability and stability for the inverse problem by the Cauchy data has been proved in \cite{BondAn}. That result and its generalizations were applied to investigation of partial inverse problems \cite{KuzCauchy, Bond23}, the inverse transmission eigenvalue problem \cite{BCK20}, and the Regge-type problem \cite{XuBondStab}. In this paper, we generalize the notion of Cauchy data to the case of polynomials in the boundary condition (see Section~\ref{sec:main} for details) and prove local solvability and stability for recovering $\sigma(x)$, $r_1(\la)$, and $r_2(\la)$ from the generalized Cauchy data. For the proof, we use the generalized Cauchy data to construct the spectral data (eigenvalues and weight numbers) and then apply our main result on local solvability and stability for the inverse spectral problem. In the future, we plan to use the results of this paper for studying partial inverse problems and the Sturm-Liouville problems with analytical dependence on the spectral parameter in one of the boundary conditions, which were considered in \cite{Chit}.

It is worth explaining why we consider the polynomial dependence on the spectral parameter in the boundary conditions \eqref{bc} only at the right endpoint. Inverse problems with polynomials in the both boundary conditions can be studied (see, e.g., \cite{FrYu}) but then the polynomials at one of the endpoints have to be given a priori. In \cite{ChitBond}, the authors have shown how the inverse problem with polynomials in the both boundary conditions can be transformed to the inverse problem with polynomials at a single endpoint. Therefore, we carry out our further research for the latter case. Furthermore, we consider a singular potential $q(x)$ of the class $W_2^{-1}(0,\pi)$ in order to simplify the technical work with spectral data asymptotics and with the generalized Cauchy data. Analogous results can be obtained for a regular potential $q \in L_2(0,\pi)$ but this requires additional complicated calculations.

The paper is organized as follows. In Section~\ref{sec:main}, the main results and the proof strategy are provided. Section~\ref{sec:prelim} contains some preliminaries. In Section~\ref{sec:maineq}, we present the main equation of the inverse problem and obtain some features of its solution. In Section~\ref{sec:rec}, we derive reconstruction formulas for the polynomials in the case of multiple eigenvalues. In Section~\ref{sec:solvstab}, the theorem on local solvability and stability of the inverse spectral problem is proved. Section~\ref{sec:cauchy} is devoted to local solvability and stability of the inverse problem by the generalized Cauchy data.

\section{Main results and proof strategy} \label{sec:main}

Denote by $L = L(\sigma, r_1, r_2)$ the boundary value problem \eqref{eqv}-\eqref{bc}. 
Due to the standard regularization approach of \cite{SavShkal03},
we understand equation \eqref{eqv} in the following equivalent sense:
\begin{equation} \notag
-(y^{[1]})' - \sigma(x)y^{[1]} - \sigma^2(x)y = \la y, \quad x \in (0,\pi),
\end{equation}
where $y, y^{[1]} \in AC[0,\pi]$.
Obviously, the polynomials of the boundary conditions \eqref{bc} can be expressed in the form
$$
r_1(\la) = 	\sum \limits_{n=0}^{K_1} c_n \la^n, \quad  r_2(\la) = \sum \limits_{n=0}^{K_2} d_n \la^n, \quad K_1, K_2 \ge 0.
$$

We will write $(r_1, r_2)\in\mathcal R_p $ if $K_1 = K_2 = p$ and $c_p = 1$. Note that the higher coefficients of $r_2(\lambda)$ can be equal to zero. Throughout this paper, we assume that $(r_1, r_2)\in\mathcal R_p$. The opposite case $K_2 < K_1$ can be considered similarly.

Let us define $S(x, \la)$ and $\varphi(x, \la)$ as the solutions of equation~\eqref{eqv} under the initial conditions:
\begin{equation} \label{initphi}
S(0, \la) = 0, \quad S^{[1]}(0, \la) = 1, \quad \varphi(0, \la) = 1, \quad \varphi^{[1]}(0, \la) = 0.
\end{equation}
Clearly, the functions $S(x, \la)$, $S^{[1]}(x, \la)$, $\varphi(x, \la)$ and $\varphi^{[1]}(x, \la)$ are entire analytic in $\la$ for each fixed $x \in [0,\pi]$.

Let us also introduce the auxiliary boundary value problem $L_0$ for equation~\eqref{eqv} with the boundary conditions
$$
y(0) = 0, \quad r_1(\la)y^{[1]}(\pi) + r_2(\la)y(\pi) = 0.
$$

It can be shown by simple calculations that the spectra of the problems $L$ and $L_0$ coincide with the zeros of the characteristic functions
\begin{gather}\label{charfun}
\Delta_1(\la) = r_1(\la)\varphi^{[1]}(\pi, \la) + r_2(\la)\varphi(\pi, \la),\\ \notag
\Delta_0(\la) = r_1(\la)S^{[1]}(\pi, \la) + r_2(\la)S(\pi, \la), 
\end{gather}
respectively.

Define the Weyl function of the boundary value problem $L$:
\begin{equation} \label{fracD01}
M(\la) = -\dfrac{\Delta_0(\la)}{\Delta_1(\la)}.
\end{equation}
From this representation, we can see that $M(\la)$ is uniquely determined by the spectra of the problems $L$ and $L_0$.

It has been proved in \cite{ChitBond} that the spectrum of the boundary value problem $L$ is a countable set of eigenvalues. They can be numbered as $\{\la_n\}_{n \ge 1}$ basing on their asymptotic behavior:
\begin{gather}\label{la_asymp}
\rho_n = \sqrt{\la_n} = n - p - 1 + \varkappa_n, \quad \{ \varkappa_n\} \in l_2.
\end{gather}

The asymptotics \eqref{la_asymp} implies that the eigenvalues are simple for sufficiently large indices $n$. However, a finite number of eigenvalues in general can be multiple.
Let $I = \{1\}\cup\{n > 1 \colon \la_n \neq \la_{n-1}\}$ be the set of the indices of all the distinct eigenvalues, and let $m_n$, $n \in I$, be their multiplicities. 
Then, the Weyl function $M(\la)$ is meromorphic in $\la$ and all its singularities are the poles $\{ \la_n \}_{n \in I}$. Due to the results of \cite{ChitBond}, the following representation holds:
\begin{gather} \label{double_sum}
M(\la) = \sum_{n \in I} \sum_{k=0}^{m_k-1} \frac{\alpha_{n+k}}{(\la - \la_n)^{k+1}},
\end{gather}
where the coefficients $\{ \alpha_n \}_{n \ge 1}$ are called the weight numbers.
We refer to the numbers $\{\la_n, \alpha_n\}_{n \ge 1}$ as \textit{the spectral data} of the problem $L$ and study the following inverse problem.

\begin{ip}\label{ip:main3}
	Given the spectral data $\{ \la_n, \alpha_n \}_{n \ge 1}$, find $\sigma(x)$, $r_1(\la)$, and $r_2(\la)$.
\end{ip}

Obviously, Inverse Problem~\ref{ip:main3} is equivalent to the reconstruction of $\sigma$, $r_1$, and $r_2$ from the Weyl function $M(\la)$ or from the two spectra of the boundary value problems $L$ and $L_0$. The uniqueness for solution of Inverse Problem~\ref{ip:main3} has been proved in \cite{ChitBond}.

In addition to $L$, let us consider another boundary value problem $\tilde L = L(\tilde \sigma, \tilde r_1, \tilde r_2)$ of the same form but with different coefficients. We agree that, if a symbol $\ga$ denotes an object related to $L$, then the symbol $\tilde \ga$ with a tilde will denote the corresponding object related to $\tilde L$, and $\hat \ga = \ga - \tilde \ga$.

\begin{prop}[\cite{ChitBond}] \label{prop:uniq}
If $M(\la) = \tilde M(\la)$, then $\sigma(x) = \tilde\sigma(x)$ a.e. on $(0, \pi)$ and $r_j (\la) \equiv \tilde r_j (\la)$, j = 1, 2.
\end{prop}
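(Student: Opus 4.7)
The plan is to follow a Weyl-solution/matrix-of-Wronskians argument in the style of the method of spectral mappings, adapted to distribution potentials and $\lambda$-dependent boundary conditions. First I would introduce the Weyl solution
$$
\Phi(x,\la) = S(x,\la) + M(\la)\,\varphi(x,\la).
$$
By \eqref{charfun} and \eqref{fracD01} a direct substitution gives $r_1(\la)\Phi^{[1]}(\pi,\la) + r_2(\la)\Phi(\pi,\la) = \Delta_0(\la) + M(\la)\Delta_1(\la) \equiv 0$, so $\Phi$ satisfies the second boundary condition; the pair $\{\varphi,\Phi\}$ is a fundamental system with Wronskian $\varphi\Phi^{[1]} - \varphi^{[1]}\Phi \equiv 1$. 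Defining $\tilde\Phi$ in the same way from $\tilde\sigma$, $\tilde r_1$, $\tilde r_2$ and $\tilde M$, I would set
$$
P(x,\la) = \begin{pmatrix} \varphi & \Phi \\ \varphi^{[1]} & \Phi^{[1]} \end{pmatrix}(x,\la)\begin{pmatrix} \tilde\varphi & \tilde\Phi \\ \tilde\varphi^{[1]} & \tilde\Phi^{[1]} \end{pmatrix}^{-1}(x,\la).
$$
Using the hypothesis $M = \tilde M$ and substituting $\Phi = S + M\varphi$, $\tilde\Phi = \tilde S + M\tilde\varphi$, every entry of $P$ simplifies to a polynomial expression in $\varphi,\varphi^{[1]},S,S^{[1]}$ and their tilded analogues, in which the meromorphic factor $M$ cancels; e.g.\ $P_{11} = \varphi\tilde S^{[1]} - S\tilde\varphi^{[1]}$ and $P_{12} = S\tilde\varphi - \varphi\tilde S$. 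Thus each $P_{jk}(x,\cdot)$ is entire in $\la$.

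Next I would control the growth of $P$ at infinity. The standard asymptotics of $\varphi,S$ and their quasi-derivatives for $W_2^{-1}$ potentials (available from \cite{SavShkal03,Hry03} and used in \cite{ChitBond}) give, for fixed $x\in[0,\pi]$, bounds of the form $|\varphi(x,\la)|, |\tilde S^{[1]}(x,\la)| \le Ce^{|\mathrm{Im}\,\rho|x}$ and $|S(x,\la)|, |\tilde\varphi^{[1]}(x,\la)|\cdot|\rho|^{-1} \le Ce^{|\mathrm{Im}\,\rho|x}/|\rho|$. The leading oscillatory terms $\cos^2(\rho x)+\sin^2(\rho x)=1$ in $P_{11}$ and the cancellation in $P_{12}$ force $P_{11}(x,\la)\to 1$ and $P_{12}(x,\la)\to 0$ as $|\la|\to\infty$; analogous estimates apply to $P_{21}$ and $P_{22}$. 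Since the $P_{jk}$ are entire and bounded, Liouville's theorem yields $P(x,\la)\equiv I$ for every $x$. Hence $\varphi(x,\la)\equiv\tilde\varphi(x,\la)$ and $\Phi(x,\la)\equiv\tilde\Phi(x,\la)$, which combined with $M=\tilde M$ gives $S\equiv\tilde S$. Since $\varphi$ and $S$ solve $-y''+qy=\la y$ with the same initial data in both problems, the potentials coincide and $\sigma(x)=\tilde\sigma(x)$ a.e.\ on $(0,\pi)$.

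Once $\sigma=\tilde\sigma$ is established, the functions $\varphi,S,\varphi^{[1]},S^{[1]}$ and their tildes agree identically. Equating $M(\la)$ with $\tilde M(\la)$ via \eqref{charfun}--\eqref{fracD01} and cross-multiplying yields, after using the Wronskian identity $\varphi(\pi,\la)S^{[1]}(\pi,\la) - \varphi^{[1]}(\pi,\la)S(\pi,\la) \equiv 1$,
$$
r_1(\la)\tilde r_2(\la) - \tilde r_1(\la) r_2(\la) \equiv 0.
$$
Because $(r_1,r_2),(\tilde r_1,\tilde r_2)\in\mathcal R_p$ consist of coprime polynomials with $r_1,\tilde r_1$ monic of common degree $p$, the divisibility $r_1\mid \tilde r_1 r_2$ together with $\gcd(r_1,r_2)=1$ forces $r_1\equiv\tilde r_1$ and then $r_2\equiv\tilde r_2$.

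The technically hardest step is the boundedness argument in the second paragraph: one must extract sharp enough asymptotics of $\varphi,S,\varphi^{[1]},S^{[1]}$ for $W_2^{-1}$ potentials so that the oscillatory pieces cancel and the $l_2$-type remainders do not ruin uniform boundedness on $\mathbb C$. Everything else (the algebraic cancellations defining $P$, the Liouville step, and the recovery of $r_1,r_2$) is essentially formal once those asymptotics are in hand.
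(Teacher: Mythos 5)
The paper does not actually prove this proposition --- it is imported from \cite{ChitBond}, where it is established by essentially the argument you outline: the Weyl solution $\Phi = S + M\varphi$, the matrix $P(x,\la)$ with Wronskian-type entries in which $M$ cancels, Liouville's theorem, and then a polynomial identity plus coprimality to recover $r_1, r_2$. Your algebra is right throughout: the cancellation giving $P_{11}=\varphi\tilde S^{[1]}-S\tilde\varphi^{[1]}$ and $P_{12}=S\tilde\varphi-\varphi\tilde S$, the normalization $\langle\varphi,\Phi\rangle\equiv 1$, and the final step $r_1\tilde r_2-\tilde r_1 r_2\equiv 0$ (obtained from $\Delta_0\tilde\Delta_1\equiv\tilde\Delta_0\Delta_1$ and $\varphi S^{[1]}-\varphi^{[1]}S\equiv 1$ at $x=\pi$) combined with $\gcd(r_1,r_2)=1$ and monicity are all correct.

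The step that does not go through as written is the application of Liouville's theorem. Boundedness of $P_{11}$ and $P_{12}$ on all of $\mathbb C$ cannot be read off from the entire representations $\varphi\tilde S^{[1]}-S\tilde\varphi^{[1]}$ and $S\tilde\varphi-\varphi\tilde S$: each product there is of order $e^{2|\mathrm{Im}\,\rho|x}$, and the identity $\cos^2\rho x+\sin^2\rho x=1$ only removes the leading term, leaving remainders of order $e^{2|\mathrm{Im}\,\rho|x}/|\rho|$, which are unbounded off the real axis. So the statement ``$P_{11}\to 1$ and $P_{12}\to 0$ as $|\la|\to\infty$'' holds along the real axis but not in the whole plane, and boundedness is exactly what Liouville needs. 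The standard repair is to use the \emph{other} representation, $P_{11}=\varphi\tilde\Phi^{[1]}-\Phi\tilde\varphi^{[1]}$ and $P_{12}=\Phi\tilde\varphi-\varphi\tilde\Phi$, together with the decay of the Weyl solution, $|\Phi(x,\la)|\le C|\rho|^{-1}e^{-|\mathrm{Im}\,\rho|\,x}$ and $|\Phi^{[1]}(x,\la)|\le C e^{-|\mathrm{Im}\,\rho|\,x}$, valid for $\la$ outside small disks around the eigenvalues of both problems; these estimates must be verified for the $\mathcal R_p$ boundary condition, where $\Phi=-\psi/\Delta_1$ with $|\psi(x,\la)|\le C|\rho|^{2p}e^{|\mathrm{Im}\,\rho|(\pi-x)}$ and $|\Delta_1(\la)|\ge C|\rho|^{2p+1}e^{|\mathrm{Im}\,\rho|\pi}$ there, so the same decay survives. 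This yields $|P_{11}|\le C$ and $|P_{12}|\le C|\rho|^{-1}$ away from the excluded disks, hence everywhere by the maximum principle, and then Liouville plus the real-axis asymptotics gives $P_{11}\equiv 1$, $P_{12}\equiv 0$. You correctly flagged this as the hard step, but the bounds you quote are not the ones that close it.
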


Since the Weyl function is uniquely determined by the spectral data, Proposition~\ref{prop:uniq} implies the uniqueness for solution of Inverse Problem~\ref{ip:main3}.

\begin{notation} \label{not:GammaN}
Let $N$ be such an index that $\tilde{m}_n = 1$ for $n \ge N$ 
and that the eigenvalues $\{ \tilde \la_n \}_{n = 1}^{N-1}$ of $\tilde L$ lie inside the contour 
$$
\Gamma_N = \Bigg\{ \la \in \mathbb C \colon |\la| = \bigg(N-p-\dfrac{3}{2}\bigg)^2 \Bigg\}.
$$
\end{notation}

Introduce the function
\begin{gather} \label{mn_func}
M_N(\la) = \sum\limits_{n \in I, n < N} \sum\limits_{k=0}^{m_k-1} \dfrac{\alpha_{n+k}}{(\la-\la_n)^{k+1}}.
\end{gather}

Put $\rho_n := \sqrt{\la_n}$, $\arg \rho_n \in \left( -\tfrac{\pi}{2}, \tfrac{\pi}{2} \right]$, $n \ge 1$.

The first of our main results is the following theorem on local solvability and stability of Inverse Problem~\ref{ip:main3}.

\begin{thm} \label{stability_thm}
Let $\tilde L = L(\tilde \sigma, \tilde r_1, \tilde r_2)$ be a fixed boundary value problem of form \eqref{eqv}-\eqref{bc} with $\tilde \sigma \in L_2(0,\pi)$, $(\tilde r_1, \tilde r_2) \in \mathcal R_p$, and let the index $N$ be defined as above. 
Then, there exists $\delta_0 > 0$ depending on problem $\tilde L$ such that, for any complex numbers $\{\la_n, \alpha_n\}_{n \ge 1}$ satisfying the condition
	\begin{equation} \label{estde}
	\delta := \max\Bigg\{\max\limits_{\la \in \Gamma_N}|\hat M_N(\la)|,\bigg(\sum\limits_{n=N}^\infty(|\tilde \rho_n - \rho_n|+|\tilde\alpha_n - \alpha_n|)^2\bigg)^{\frac{1}{2}}\Bigg\} \le \delta_0,
	\end{equation}
	where $\rho_n := \sqrt{\la_n}$, $\tilde \rho_n := \sqrt{\tilde \la_n}$,
	there exist a complex-valued function $\sigma(x) \in L_2(0, \pi)$ and polynomials $(r_1, r_2)\in\mathcal R_p$, which are the solution of Inverse Problem~\ref{ip:main3} for the data $\{\la_n, \alpha_n\}_{n \ge 1}$. Moreover,
	\begin{equation} \label{stab}
	\|\sigma - \tilde\sigma\|_{L_2(0, \pi)} \le C\delta, \quad |c_n - \tilde c_n| \le C\delta, \quad |d_n - \tilde d_n| \le C\delta, \quad n = \overline{0, p},
	\end{equation}
	where the constant $C$ depends only on $\tilde L$.
\end{thm}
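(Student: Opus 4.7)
The plan is to reduce Inverse Problem~\ref{ip:main3} to the main linear equation in the Banach space of continuous functions on $\Gamma_N$, whose construction was announced in the introduction and will be carried out in Section~\ref{sec:maineq}, and then combine the contour analysis with the previously established result of \cite{ChitBondStab} for the simple‑eigenvalue tail. The crucial point of the formulation is that the perturbation measure $\delta$ splits into two pieces that match this dichotomy: the quantity $\max_{\la\in\Gamma_N}|\hat M_N(\la)|$ controls the finite rational piece $M_N$ uniformly on a fixed contour that encloses all (possibly multiple) small eigenvalues of $\tilde L$, while the $l_2$ piece controls the simple tail for $n\ge N$. This is the right notion of smallness because multiple eigenvalues may split under perturbation, so that individual $\hat\la_n$ and $\hat\alpha_n$ with $n<N$ cannot be assumed small, whereas the rational function $M_N$ on $\Gamma_N$ (which encircles all split pieces) still depends continuously on the data.

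First I would fix the model problem $\tilde L$, define the transformation that maps $\tilde L$ into a triple $(\sigma, r_1, r_2)$ associated with the prescribed data $\{\la_n, \alpha_n\}_{n\ge 1}$ via the method of spectral mappings, and rewrite this transformation as a linear equation of the form $(I+\hat R)\phi = \tilde\phi$ in $C(\Gamma_N)$ (combined with the analogous linear equation on the tail indices $n\ge N$). A standard computation, performed analogously to \cite{BondTamkang, ChitBondStab}, should show that $\|\hat R\|\le C\delta$ and $\|\tilde\phi - \phi_0\|\le C\delta$, where the constants depend only on $\tilde L$. Choosing $\delta_0$ so that $C\delta_0 < 1$, the Neumann series yields a unique solution $\phi$ with $\|\phi-\tilde\phi\|\le C\delta$.

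Next I would apply the reconstruction formulas for $\sigma$, $r_1$, $r_2$ to produce the candidate solution. For the $L_2(0,\pi)$‑part of $\sigma$ and the polynomial coefficients, the formulas (to be derived in Section~\ref{sec:rec} and combined with those of \cite{ChitBondStab} for the simple tail) have the schematic form of a contour integral over $\Gamma_N$ of quantities built from $\phi$ plus an $l_2$‑sum over $n\ge N$. Estimating in the respective norms gives the bounds in \eqref{stab} directly from the bound on $\hat\phi$. Finally, the standard verification step of the method of spectral mappings, adapted to polynomial boundary conditions as in \cite{ChitBond}, shows that the reconstructed triple $(\sigma, r_1, r_2)$ really has the prescribed spectral data, i.e., it solves Inverse Problem~\ref{ip:main3}.

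The main obstacle will be the reconstruction formulas for $r_1(\la)$ and $r_2(\la)$ in the presence of multiple eigenvalues, carried out in Section~\ref{sec:rec}. In previous works the extraction of the polynomial coefficients $\{c_n, d_n\}$ from the spectral data relied on asymptotic or residue arguments that implicitly assumed simplicity, and, when applied naively under splitting, the residue weights $\alpha_{n+k}$ blow up. The technical core of the proof is therefore to express $c_n, d_n$ through contour integrals over $\Gamma_N$ that remain stable under splitting, and to show that these integrals depend continuously on $\phi$ in the $C(\Gamma_N)$‑norm rather than on the individual $\la_n, \alpha_n$ for $n<N$. Once these new reconstruction formulas are established, the Banach‑space argument above is routine, and the combination with \cite{ChitBondStab} for the tail completes the proof of local solvability and the stability estimates \eqref{stab}.
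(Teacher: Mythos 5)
Your plan coincides with the paper's actual proof: a finite perturbation handled via the main equation $\tilde\varphi = (E+\tilde Q(x))\varphi$ in $C(\Gamma_N)$ with $\|\tilde Q(x)\|\le C\delta_1$ and a Neumann-series inversion, reconstruction of $\sigma$, $r_1$, $r_2$ by contour-integral formulas whose derivation for multiple eigenvalues is indeed the technical core (Section~\ref{sec:rec}, Lemmas~\ref{lem:cz} and~\ref{degree}), and then the tail of simple eigenvalues handled by the result of \cite{ChitBondStab}. You also correctly identify why $\max_{\la\in\Gamma_N}|\hat M_N|$ is the right smallness measure for the finite part under splitting.

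One point you treat as automatic deserves care: the paper does not solve a single combined equation but composes two perturbations, first passing from $\tilde L$ to an intermediate problem $\check L$ by perturbing only the data with $n<N$, and then applying the simple-eigenvalue theorem with $\check L$ as the base problem. Since the admissible radius $\delta_0$ and the constant $C$ in that tail theorem a priori depend on the base problem, which is now $\check L$ rather than $\tilde L$, one must argue that they can be chosen uniformly for all $\check L$ whose coefficients are $C\delta_1$-close to those of $\tilde L$; the paper invokes this uniformity explicitly from the proof in \cite{ChitBondStab}. Without this remark the composition does not immediately yield constants depending only on $\tilde L$, so you should add it to complete the argument.
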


Lemma A1 from \cite{BondGaidel} implies that, for the function $M_N(\la)$, which is built by formula \eqref{mn_func} and satisfies the condition \eqref{estde} for sufficiently small $\delta_0$, all its poles $\{\la_n\}_{n = 1}^{N-1}$ lie inside the contour $\Gamma_N$.

The proof of Theorem~\ref{stability_thm} is divided into two parts. In the first one, we perturb only finite part of the spectral data with multiple eigenvalues. We develop a constructive method for solution, which is based on the ideas of \cite{FrYu, BondTamkang, ChitBond}. In the case of a finite perturbation, the inverse problem is reduced to a linear equation in the Banach space $C(\Gamma_N)$ of the continuous functions on the contour $\Gamma_N$ with the norm 
$$
\| f \|_{C(\Gamma_N)} = \max_{\la \in \Gamma_N} |f(\la)|.
$$
The linear main equation is considered for each fixed $x \in [0,\pi]$ and has the form
$$
\tilde \varphi(x, \la) = (E + \tilde Q(x)) \varphi(x, \la),
$$
where $\tilde Q(x)$ is a linear integral operator in $C(\Gamma_N)$, which is rigorously defined in Section~\ref{sec:maineq}, and $E$ is the identity operator.

We show that, if $\| \hat M_N \|_{C(\Gamma_N)}$ is sufficiently small, then the operator $(E + \tilde Q(x))$ has a bounded inverse operator, so the main equation is uniquely solvable. Furthermore, we derive the formulas for the reconstruction of $\sigma(x)$, $r_1(\la)$, and $r_2(\la)$ by using the solution $\varphi(x, \la)$ of the main equation:

\begin{thm} \label{mainthm}
	Let $N$ be a fixed positive integer.
	Suppose that the spectral data of two boundary value problems $L$ and $\tilde L$ fulfill the conditions
	$\la_n = \tilde \la_n$, $\alpha_n = \tilde\alpha_n$ for $n \ge N$ and $\la_n, \tilde \la_n \in \mbox{int}\,\Gamma_N$ for $n < N$. Then, the following relations hold:
	\begin{align} \label{sigma_series}
	\sigma(x) = \tilde \sigma(x) - \frac{1}{2\pi i}\oint_{\Gamma_N}{\big(2\tilde\varphi(x, \mu)\varphi(x, \mu) - 1\big)\hat M_N(\mu)}d\mu,
	\end{align}
	\begin{align} \label{r1_series}
	r_1(\la) = \prod\limits_{k = 1}^{N-1} \dfrac{\la-\la_{k}}{\la-\tilde \la_{k}} \Bigg(\tilde r_1(\la) - \dfrac{1}{2\pi i}\oint_{\Gamma_N}{\dfrac{\tilde r_1(\la)\tilde\varphi^{[1]}(\pi, \mu) + \tilde r_2(\la)\tilde\varphi(\pi, \mu)}{\la - \mu}\varphi(\pi, \mu)\hat M_N(\mu)}d\mu \Bigg),
	\end{align}
	\begin{align}\notag 
	r_2(\la) &= \prod\limits_{k = 1}^{N-1} \dfrac{\la-\la_{k}}{\la-\tilde \la_{k}}\Bigg(\tilde r_2(\la) - \tilde r_1(\la)\frac{1}{2\pi i}\oint_{\Gamma_N}{\big(\tilde\varphi(\pi, \mu)\varphi(\pi, \mu) - 1\big)\hat M_N(\mu)}d\mu\\ \label{r2_series}
	&+\dfrac{1}{2\pi i}\oint_{\Gamma_N}{\dfrac{\tilde r_1(\la)\tilde\varphi^{[1]}(\pi, \mu) + \tilde r_2(\la)\tilde\varphi(\pi, \mu)}{\la - \mu}\varphi^{[1]}(\pi, \mu)\hat M_N(\mu)}d\mu \Bigg).
	\end{align}
\end{thm}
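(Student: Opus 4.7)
The approach follows the method of spectral mappings from \cite{FrYu, BondTamkang, ChitBond}, adapted to the case of multiple eigenvalues. The key observation is that, under the hypotheses of the theorem, the Weyl-function difference satisfies $M(\la) - \tilde M(\la) = \hat M_N(\la)$: it is a rational function with all poles inside $\Gamma_N$, since the ``outer'' parts of the representations \eqref{double_sum} for $M$ and $\tilde M$ are identical. Consequently, every residue sum over the inner eigenvalues appearing in a classical Yurko-type reconstruction formula can be rewritten as a contour integral over $\Gamma_N$ against $\hat M_N(\mu)\,d\mu$. The proof of Theorem~\ref{mainthm} will invoke the main equation $\tilde\varphi(x,\la) = (E+\tilde Q(x))\varphi(x,\la)$ from Section~\ref{sec:maineq} (itself built from a Lagrange-type bilinear identity between $\varphi$ and $\tilde\varphi$ combined with contour integration against $\hat M_N$) and then translate its solution into the explicit formulas \eqref{sigma_series}--\eqref{r2_series}.

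To obtain \eqref{sigma_series} I would differentiate the product $\tilde\varphi(x,\la)\varphi(x,\la)$ in $x$, use the two regularized Sturm--Liouville equations to produce a $(\sigma-\tilde\sigma)$-term plus a total derivative, and then integrate the resulting identity against $\hat M_N(\mu)\,d\mu$ over $\Gamma_N$ after closing the relation via the main equation. The subtracted ``$-1$'' inside the integrand cancels the non-decaying part of $\tilde\varphi(x,\mu)\varphi(x,\mu)$ as $|\mu|\to\infty$ along $\Gamma_N$, so that the contour integral captures exactly the required residue sum, while the factor $2$ comes from the symmetric bilinear structure of the identity.

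For \eqref{r1_series}--\eqref{r2_series} I would first establish the factorization
$$
\frac{\Delta_1(\la)}{\tilde\Delta_1(\la)} = \prod_{k=1}^{N-1}\frac{\la-\la_k}{\la-\tilde\la_k},
$$
which holds because $\Delta_1$ and $\tilde\Delta_1$ are entire functions of the same exponential type whose zeros coincide for $n\ge N$; this explains the prefactor common to both formulas. Then I would substitute $\Delta_1(\la) = r_1(\la)\varphi^{[1]}(\pi,\la)+r_2(\la)\varphi(\pi,\la)$ together with its analogue for $\tilde\Delta_1$, replace $\tilde\varphi(\pi,\la)$ and $\tilde\varphi^{[1]}(\pi,\la)$ via the main equation at $x=\pi$, and decouple $r_1$ from $r_2$ by matching asymptotic coefficients in $\la$, using the normalization $c_p=1$ and the fact that $\varphi(\pi,\la)$ and $\varphi^{[1]}(\pi,\la)$ carry different leading powers of $\rho$. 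The main obstacle will be precisely this decoupling, combined with the bookkeeping of higher-order residues produced by multiple eigenvalues through the Jordan-type sums in \eqref{double_sum}; this is the new technical ingredient compared to the simple-eigenvalue treatment of \cite{ChitBondStab} and is the content of the dedicated Section~\ref{sec:rec}.
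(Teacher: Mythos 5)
Your overall framing is right (finite perturbation, $\hat M = \hat M_N$ rational with poles inside $\Gamma_N$, residue sums rewritten as contour integrals, the prefactor coming from $\Delta_1/\tilde\Delta_1 = \prod_{k<N}(\la-\la_k)/(\la-\tilde\la_k)$), but the mechanism you propose for $r_1$ and $r_2$ diverges from the paper's and leaves the hardest step unaddressed. The paper does not decouple $r_1$ from $r_2$ by matching asymptotic coefficients in the single identity $r_1\varphi^{[1]}(\pi,\la)+r_2\varphi(\pi,\la)=\Delta_1(\la)$. Instead it uses the analogue of the main equation for the Weyl solution, i.e.\ \eqref{series_phi} evaluated at $x=\pi$ together with its quasi-derivative, and the fact that $\tilde\Phi(\pi,\la)$ and $\tilde\Phi^{[1]}(\pi,\la)$ are $\mp\tilde r_1(\la)/\tilde\Delta_1(\la)$, $\pm\tilde r_2(\la)/\tilde\Delta_1(\la)$. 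This yields directly $r_2(\la)/r_1(\la)=Y(\la)/Z(\la)$, where $Z$ and $Y$ are exactly the bracketed expressions in \eqref{r1_series} and \eqref{r2_series}; the mixed $\la$--$\mu$ integrands $\tilde r_1(\la)\tilde\varphi^{[1]}(\pi,\mu)+\tilde r_2(\la)\tilde\varphi(\pi,\mu)$ come from the kernel $\tilde E(\pi,\la,\mu)$ and would not emerge from an asymptotic-matching argument, which at best pins down leading coefficients rather than producing these closed-form expressions.

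The genuine gap is the step you yourself label ``the main obstacle'' and then skip: one must prove that $Z$ and $Y$ vanish at each $\tilde\la_n$ inside $\Gamma_N$ with multiplicity $\tilde m_n$, so that after multiplication by $\prod_{k<N}(\la-\la_k)/(\la-\tilde\la_k)$ one gets entire functions, and then that these are polynomials with $(g_1,g_2)\in\mathcal R_p$ (a degree count on the residue sums). Without this vanishing the right-hand sides of \eqref{r1_series}--\eqref{r2_series} would have poles at the $\tilde\la_k$ and the theorem would be false as stated. In the paper this is Lemma~\ref{lem:cz}, whose proof in the multiple-eigenvalue case is the new technical core of Section~\ref{sec:rec}: it rests on Lemma~\ref{lem:f}, namely that $f(\la)=r_1(\la)-M(\la)\Delta_1(\la)\varphi(\pi,\la)$ has a zero of order at least $m_n$ at each eigenvalue, which in turn uses the structure of the chains of associated functions $\varphi^{\langle j\rangle}(\cdot,\la_n)$, $\psi^{\langle j\rangle}(\cdot,\la_n)$ and the explicit form of the highest-order weight number $\alpha_{n+m_n-1}$. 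Your proposal contains no substitute for this argument, and your route via the factorization of $\Delta_1/\tilde\Delta_1$ does not avoid it, since you would still need to verify that your candidate $(r_1,r_2)$ coincides with the specific integral expressions of the theorem. (A minor additional point: on the fixed finite contour $\Gamma_N$ there is no issue of decay as $|\mu|\to\infty$, so your explanation of the ``$-1$'' in \eqref{sigma_series} is not the right one; it is a normalization inherited from the derivation of the reconstruction formula in \cite{BondTamkang, ChitBond} and it does contribute, since $\oint_{\Gamma_N}\hat M_N(\mu)\,d\mu\neq 0$ in general.)
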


For the zero model problem with the coefficients $\tilde \sigma(x) \equiv 0$, $\tilde r_1(\la) = \la^{p}$, $\tilde r_2(\la) = 0$, the reconstruction formulae have been obtained in \cite{ChitBond}. Those formulae consist of two parts: the first one is specified by a finite part of spectrum with multiple eigenvalues and has an integral form, and the second one is specified by the infinite part of spectrum with simple eigenvalues and has a form of an infinite sum. Reconstruction formulae in the case of simple eigenvalues and an arbitrary model problem have been obtained in \cite{ChitBondStab}. Thus, the cases of the previous studies \cite{ChitBond} and \cite{ChitBondStab} are different from the situation considered in this paper. The derivation of the formulas \eqref{r1_series} and \eqref{r2_series} for the polynomials in our case is more technically complicated, so we provide the details in Section~\ref{sec:rec}.
Using the reconstruction formulas of Theorem~\ref{mainthm}, we prove the estimates \eqref{stab} and so conclude the proof of Theorem~\ref{thm_mult} for a finite perturbation of the spectral data.

In the second part of the proof, we perturb an infinite part of the spectral data with simple eigenvalues and apply Theorem 6.1 from \cite{ChitBondStab}. Finally, we combine the results of the two parts and arrive at the local solvability and stability of Inverse Problem~\ref{ip:main3} in the general case.

Our next result is concerned with the inverse problem by the generalized Cauchy data. Using the integral representations for $S(x, \la)$, $S^{[1]}(x, \la)$, $\varphi(x, \la)$, $\varphi^{[1]}(x, \la)$ from \cite{ChitBond} and direct calculations, which are based on ideas from \cite{Chit}, we prove the following lemma.

\begin{lem}
	The following representations hold
	\begin{align}\label{delta_0}
	\Delta_0(\la) & = \rho^{2p}\cos\rho\pi + \rho^{2p}\int\limits_{0}^{\pi} {J(t)\cos\rho t} \, dt + \sum\limits_{n=0}^{p-1}D_n\rho^{2n}, \\ \label{delta_1}
	\Delta_1(\la) & = - \rho^{2p+1}\sin\rho\pi + \rho^{2p+1}\int\limits_{0}^{\pi} {G(t)\sin\rho t} \, dt + \sum\limits_{n=0}^{p}C_n\rho^{2n},
	\end{align}
	where $\rho = \sqrt{\la}$, $J(t)$, $G(t) \in L_2(0, \pi)$ and $C_i$, $D_j$ for $i = \overline{0, p}$, $j = \overline{0,p-1}$ are complex numbers.
\end{lem}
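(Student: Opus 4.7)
The plan is to substitute the transformation-operator integral representations of $S(\pi,\la)$, $S^{[1]}(\pi,\la)$, $\varphi(\pi,\la)$, $\varphi^{[1]}(\pi,\la)$ furnished by \cite{ChitBond} directly into \eqref{charfun} and then expand. For the distribution potential $\sigma \in L_2(0,\pi)$, each of these solutions at $x=\pi$ decomposes as a trigonometric principal part ($\cos\rho\pi$ or $\sin\rho\pi/\rho$) plus an integral on $(0,\pi)$ with $L_2$-kernel against $\cos\rho t$ or $\sin\rho t$. Multiplying by $r_1(\la) = \rho^{2p} + \sum_{n=0}^{p-1} c_n\rho^{2n}$ and $r_2(\la) = \sum_{n=0}^p d_n\rho^{2n}$ expresses $\Delta_0$ and $\Delta_1$ as finite linear combinations of $\rho^{2k}\cos\rho\pi$, $\rho^{2k+1}\sin\rho\pi$ and $\rho^{2k}$ times $L_2$-kernel trigonometric integrals, $k \le p$. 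The principal contributions $\rho^{2p}\cos\rho\pi$ in $\Delta_0$ (from $\la^p\,S^{[1]}(\pi,\la)$) and $-\rho^{2p+1}\sin\rho\pi$ in $\Delta_1$ (from $\la^p\,\varphi^{[1]}(\pi,\la)$) already match \eqref{delta_0}--\eqref{delta_1}.

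Every remaining cross term must be recast so that its outer $\rho$-power becomes $\rho^{2p}$ (for $\Delta_0$) or $\rho^{2p+1}$ (for $\Delta_1$), the surplus being absorbed into $\sum D_n\rho^{2n}$ or $\sum C_n\rho^{2n}$. For stray terms $\rho^{2k}\cos\rho\pi$ and $\rho^{2k+1}\sin\rho\pi$ with $k<p$, I would invoke Taylor's formula with integral remainder, for instance
\[
\rho^{2k+1}\sin\rho\pi = \sum_{j=0}^{p-k-1}\frac{(-1)^j \pi^{2j+1}\rho^{2k+2j+2}}{(2j+1)!} + \frac{(-1)^{p-k}\rho^{2p+1}}{(2p-2k-1)!}\int_0^\pi(\pi-t)^{2p-2k-1}\sin\rho t\,dt,
\]
together with its cosine analogue; these identities split each stray term exactly into a polynomial of degree at most $p$ in $\rho^2$ plus an integral with the required outer factor and polynomial-in-$t$ (hence $L_2$) kernel. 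Cross terms $\rho^{2k}\int_0^\pi K(t)\cos\rho t\,dt$ (or with $\sin\rho t$) with $K \in L_2$ and $k<p$ are handled by iterated integration by parts antidifferentiating $K$: each pair of integrations raises the outer $\rho$-power by two at the cost of boundary contributions of the form $\rho^{2m}\cos\rho\pi$, $\rho^{2m+1}\sin\rho\pi$, which are then reduced by the Taylor formulas above. After finitely many steps all cosine kernels collect into a single $J \in L_2(0,\pi)$ and all sine kernels into $G \in L_2(0,\pi)$.

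The principal difficulty is bookkeeping: verifying step by step that the outer $\rho$-powers correctly collapse to the required $\rho^{2p}$ or $\rho^{2p+1}$, that every boundary contribution has polynomial degree at most $p$ in $\rho^2$ (so that $\sum D_n\rho^{2n}$ and $\sum C_n\rho^{2n}$ are genuinely finite sums with complex coefficients $D_j, C_i$), and that each accumulated kernel remains in $L_2(0,\pi)$. Since Taylor's integral remainder has a polynomial-in-$t$ kernel and antidifferentiation of $L_2$-functions only improves regularity, all three conditions are preserved throughout the reduction; the explicit values of $J$, $G$, $C_i$, $D_j$ can then be read off in terms of $c_n$, $d_n$, $\sigma$, and the transformation-operator kernels from \cite{ChitBond}.
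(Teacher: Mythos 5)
Your proposal follows the same route the paper itself indicates (the paper gives no detailed proof, only the remark that the lemma follows by substituting the integral representations of $S(\pi,\la)$, $S^{[1]}(\pi,\la)$, $\varphi(\pi,\la)$, $\varphi^{[1]}(\pi,\la)$ from \cite{ChitBond} into \eqref{charfun} and performing direct calculations), and your Taylor-remainder identities plus iterated integration by parts are precisely those direct calculations, correctly producing $L_2$ kernels and the stated outer powers $\rho^{2p}$ and $\rho^{2p+1}$. The one point to tighten is the degree bound: for $\Delta_0(\la)$ the polynomial residue must have degree at most $p-1$ in $\rho^2$ (not merely $p$ as you state generically), which does hold because the only degree-$p$ trigonometric term $\rho^{2p}\cos\rho\pi$ is retained as the principal part and every stray term $\rho^{2k}\cos\rho\pi$ or $\rho^{2k'+1}\sin\rho\pi$ entering $\Delta_0$ has $k,k'\le p-1$.
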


We call the collection $\{ G(t), J(t), C_0, \dots, C_p, D_0, \dots, D_{p-1}\}$ the generalized Cauchy data for the problem $L$ because of the following reason. Consider the classical Sturm-Liouville equation \eqref{eqv} with an integrable potential $q$. Denote by $S(x, \la)$ the solution of \eqref{eqv} under the initial conditions $S(0, \la) = 0$, $S'(0, \la) = 1$. Then, this solution can be represented in terms of the transformation operator (see, e.g., \cite{Mar77}):
$$
S(x, \la) = \frac{\sin \rho x}{\rho} + \frac{1}{\rho} \int_0^{\pi} \mathscr K(x, t) \sin \rho t \, dt.
$$
Consequently, one can easily derive the following representations:
\begin{align} \label{reprS}
S(\pi, \la) & = \frac{\sin \rho \pi}{\rho} - \frac{\omega \cos \rho t}{\rho^2} + \frac{1}{\rho^2}\int_0^{\pi} \mathcal K(t) \cos \rho t \, dt, \\ \label{reprSp}
S'(\pi, \la) & = \cos \rho \pi + \frac{\omega \sin \rho \pi}{\rho} + \frac{1}{\rho} \int_0^{\pi} \mathcal N(t) \sin \rho t \, dt,
\end{align}
where $\omega = \frac{1}{2} \int_0^{\pi} q(x) \, dx$, $\mathcal K(t) := \mathscr K_t(\pi, t)$, $\mathcal N(t) := \mathscr K_x(\pi, t)$.

The pair of the functions $\{ \mathcal K(t), \mathcal N(t)\}$ is called the Cauchy data (see, e.g., \cite{BondAn}), because the Sturm-Liouville equation \eqref{eqv} with the Dirichlet boundary conditions $y(0) = y(\pi) = 0$ is closely related to the Cauchy problem
\begin{gather*}
u_{tt} - u_{xx} + q(x) u = 0, \quad 0 \le |t| \le x \le \pi, \\
u(\pi, t) = \mathscr K(\pi, t), \quad u_x(\pi, t) = \mathscr K_x(\pi, t), \quad -\pi \le t \le \pi,
\end{gather*}
where $\mathscr K(x, t) = -\mathscr K(x, t)$ for $t < 0$. The initial data $\{ {\mathscr K(\pi, t), \mathscr K_x(\pi, t)}\}$ of this Cauchy problem are related in an obvious way to the functions $\{ \mathcal K(t), \mathcal N(t)\}$.

Clearly, the relations \eqref{delta_0} and \eqref{delta_1} are analogs of \eqref{reprS} and \eqref{reprSp} for the case of polynomials depending on the spectral parameter in the boundary condition. Thus, the coefficients $\{ G(t), J(t), C_0, \dots, C_p, D_0, \dots, D_{p-1}\}$ play an analogous role in  \eqref{delta_0} and \eqref{delta_1} as the Cauchy data $\{ \mathcal K(t), \mathcal N(t)\}$.

Thus, in this paper, we consider the following problem.

\begin{ip}\label{ip:c}
	Given the generalized Cauchy data $\{ G(t), J(t), C_0, \dots, C_p, D_0, \dots, D_{p-1}\}$, find $\sigma(x)$, $r_1(\la)$, and $r_2(\la)$.
\end{ip}

Note that, using the generalized Cauchy data, one can construct the functions $\Delta_0(\la)$ and $\Delta_1(\la)$ via \eqref{delta_0} and \eqref{delta_1}, respectively, and then find $M(\la)$ by \eqref{fracD01}. Therefore, the uniqueness for solution of Inverse Problem~\ref{ip:c} readily follows from Proposition~\ref{prop:uniq}.

In this paper, we prove the following theorem on local solvability and stability of Inverse Problem~\ref{ip:c}.

\begin{thm} \label{cauchy_thm}
	Let $\tilde\sigma \in L_2(0, \pi)$, $(\tilde r_1, \tilde r_2) \in \mathcal R_p$ and let $\{\tilde G(t), \tilde J(t), \tilde C_0, \dots, \tilde C_{p}, \tilde D_0, \dots, \tilde D_{p-1}\}$ be the generalized Cauchy data for the problem $\tilde L = L(\tilde \sigma, \tilde r_1, \tilde r_2)$. Then, there exists $\varepsilon > 0$ such that, for any $G, J \in L_2(0, \pi)$, $C_0, \dots, C_{p}, D_0, \dots, D_{p-1} \in \mathbb C$ satisfying the condition
	\begin{align} \nonumber
	\delta := \max\{&\|G(t)-\tilde G(t)\|_{L_2(0, \pi)}, \|J(t)-\tilde J(t)\|_{L_2(0, \pi)}, |C_0 - \tilde C_0|, \dots, |C_{p} - \tilde C_{p}|,\\ \label{difCauchy}
	& |D_0 - \tilde D_0|, \dots, |D_{p-1} - \tilde D_{p-1}|\} \le \varepsilon,
	\end{align}
	there exist a complex-valued function $\sigma \in L_2(0, \pi)$ and polynomials $(r_1, r_2)\in\mathcal R_p$, which are the solution of Inverse Problem~\ref{ip:c} by the generalized Cauchy data $\{G(t), J(t), C_0, \dots, C_{p}, D_0, \dots, D_{p-1}\}$. Moreover,
	\begin{equation} \notag
	\|\sigma - \tilde\sigma\|_{L_2(0, \pi)} \le C\delta, \quad |c_n - \tilde c_n| \le C\delta, \quad |d_n - \tilde d_n| \le C\delta, \quad n = \overline{0, p},
	\end{equation}
	where the constant $C$ depends only on $\tilde L$.
\end{thm}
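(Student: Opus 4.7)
The plan is to reduce Inverse Problem~\ref{ip:c} to Inverse Problem~\ref{ip:main3} and then invoke Theorem~\ref{stability_thm}. Starting from the given generalized Cauchy data, I define $\Delta_0(\la)$ and $\Delta_1(\la)$ as the right-hand sides of \eqref{delta_0} and \eqref{delta_1}, and set $M(\la):=-\Delta_0(\la)/\Delta_1(\la)$. Linearity of these representations in the data yields the uniform estimates $|\hat\Delta_j(\la)|\le C\delta\,|\rho|^{2p+j}e^{|\mathrm{Im}\,\rho|\pi}$ for $j=0,1$, where $\rho=\sqrt{\la}$ and $\hat\Delta_j:=\Delta_j-\tilde\Delta_j$. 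These four estimates drive everything below.

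Fix $N$ as in Notation~\ref{not:GammaN} applied to $\tilde L$. A Rouché argument on $\Gamma_N$ and on small circles around each simple zero $\tilde\la_n$ with $n\ge N$, combined with the standard lower bound for $|\tilde\Delta_1(\la)|$ away from its zeros, extracts zeros $\{\la_n\}_{n\ge 1}$ of $\Delta_1$ such that exactly $N-1$ of them (counted with multiplicities) lie inside $\Gamma_N$, each $\la_n$ with $n\ge N$ is simple and close to $\tilde\la_n$, and
\[
\sum_{n\ge N}|\rho_n-\tilde\rho_n|^2 \le C\delta^2.
\]
Define the weight numbers $\{\alpha_n\}$ through the partial-fraction expansion \eqref{double_sum} of $M(\la)$ at its poles; for $n\ge N$ the residue identity $\alpha_n=-\Delta_0(\la_n)/\Delta_1'(\la_n)$, together with the above estimates and standard asymptotic analysis, produces $\sum_{n\ge N}|\alpha_n-\tilde\alpha_n|^2\le C\delta^2$.

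To verify \eqref{estde}, observe that $|\tilde\Delta_1(\la)|\ge c>0$ on $\Gamma_N$, so for $\delta$ small also $|\Delta_1(\la)|\ge c/2$ on $\Gamma_N$, and the identity
\[
M(\la)-\tilde M(\la)=-\frac{\hat\Delta_0(\la)\tilde\Delta_1(\la)-\tilde\Delta_0(\la)\hat\Delta_1(\la)}{\Delta_1(\la)\tilde\Delta_1(\la)}
\]
gives $\|M-\tilde M\|_{C(\Gamma_N)}\le C\delta$. The tails $M-M_N$ and $\tilde M-\tilde M_N$ are analytic inside $\Gamma_N$ and, by the $l_2$-bounds just obtained, their difference is $O(\delta)$ uniformly on $\Gamma_N$. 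Hence $\|\hat M_N\|_{C(\Gamma_N)}\le C\delta$, so \eqref{estde} is satisfied with $C\delta$ in place of $\delta$. Theorem~\ref{stability_thm} then supplies $\sigma\in L_2(0,\pi)$ and $(r_1,r_2)\in\mathcal R_p$ with the desired stability estimates. For consistency one must check that $L=L(\sigma,r_1,r_2)$ realizes the prescribed Cauchy data: its characteristic functions $\Delta_j^L$ share zeros and multiplicities with $\Delta_j$ and have the same leading asymptotics (fixed by $c_p=1$), so $\Delta_j^L/\Delta_j$ is entire, bounded, hence identically $1$; comparing the representations \eqref{delta_0}, \eqref{delta_1} for $L$ with the prescribed data then identifies the coefficients term by term.

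The main technical obstacle is the second step: extracting from pointwise estimates on $\hat\Delta_j$ an $l_2$-type perturbation bound for the weight numbers while controlling the possible splitting of eigenvalues inside $\Gamma_N$. For $n\ge N$ the residue calculus is routine, but inside $\Gamma_N$ one must avoid spurious $\delta^{-1}$ factors when distinct $\la_n$ cluster near a multiple $\tilde\la_n$. This is precisely why the hypothesis of Theorem~\ref{stability_thm} is phrased as a $C(\Gamma_N)$-bound on $\hat M_N$ rather than as individual bounds on residues, and the plan exploits this by estimating $M-\tilde M$ directly through the quotient identity above, bypassing the individual pole-by-pole analysis.
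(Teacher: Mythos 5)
Your proposal follows essentially the same route as the paper: construct $\Delta_0,\Delta_1$ and $M=-\Delta_0/\Delta_1$ from the given data, use Rouch\'e's theorem together with residue and Bessel-inequality estimates to verify the hypothesis \eqref{estde} of Theorem~\ref{stability_thm} (this is exactly the content of the paper's Lemma~\ref{lem:Cauchy}, carried out there in three steps), and then invoke that theorem. The only notable difference is that you make explicit the final consistency check that the reconstructed problem realizes the prescribed Cauchy data, which the paper leaves implicit; your argument via the entire, bounded quotient $\Delta_j^{L}/\Delta_j$ being identically $1$ is sound.
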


The proof of Theorem~\ref{cauchy_thm} is based on the reduction of the inverse problem by the generalized Cauchy data to the one by the spectral data $\{ \hat M_N(\la) \}_{\la \in \Gamma_N} \cup \{ \la_n, \alpha_n\}_{n \ge N}$ and on the application of Theorem~\ref{stability_thm}.

\section{Preliminaries} \label{sec:prelim}

Throughout the paper, we use the same symbol $C$ for various positive constants independent of $x$, $\la$, $n$, etc. Derivatives with respect to $\la$ are denoted as $f^{\langle j \rangle}(\la) = \dfrac{d^j}{d^j\la} f(\la)$.

Let $\psi(x, \la)$ and $\Phi(x, \la)$ be the solutions of equation \eqref{eqv} satisfying the initial conditions
\begin{equation} \label{initpsi}
\psi(\pi, \la) = r_1(\la), \quad \psi^{[1]}(\pi, \la) = -r_2(\la),
\end{equation}
and the boundary conditions
\begin{gather}\label{bcPhi}
\Phi^{[1]}(0,  \la) = 1, \quad r_1(\la)\Phi^{[1]}(\pi, \la) + r_2(\la)\Phi(\pi, \la)=0,
\end{gather}
respectively. Obviously, for each fixed $x \in [0,\pi]$, the functions $\psi(x, \la)$ and $\psi^{[1]}(x, \la)$ are entire in $\la$. The function $\Phi(x, \la)$ is called the Weyl solution of the problem $L$. One can easily show that 
\begin{gather} \label{Mpsi}
M(\la) = \Phi(0, \la) = -\dfrac{\psi(0,\la)}{\Delta_1(\la)}.
\end{gather}

Consequently, the functions $\Phi(x, \la)$, $\Phi^{[1]}(x, \la)$, and $M(\la)$ are meromorphic in $\la$. All their singularities are poles that coincide with the eigenvalues $\{ \la_n \}_{n \in I}$.

Each eigenvalue $\la_n$ of the boundary value problem $L$ has exactly one corresponding eigenfunction
\begin{equation} \label{yn}
y_n(x) = \psi(x, \la_n) = \beta_n \varphi(x, \la_n),
\end{equation}
where $\beta_n$ is a non-zero constant. If $m_n > 1$, then $\{ \varphi^{\langle j \rangle}(x, \la_n) \}_{j = 0}^{m_n-1}$ and $\{ \psi^{\langle j \rangle}(x, \la_n) \}_{j = 0}^{m_n-1}$ are the chains of associated functions (see \cite{Nai68}). Relying on these facts, we prove the following auxiliary lemma.

\begin{lem} \label{lem:f}
Let $\la_n$ be an eigenvalue of the problem $L$ of multiplicity $m_n$. Then, the function $f(\la) := r_1(\la) - M(\la) \Delta_1(\la) \varphi(\pi, \la)$ has a zero $\la = \la_n$ of multiplicity at least $m_n$.
\end{lem}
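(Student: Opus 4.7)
\medskip

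\noindent\textbf{Proof proposal.}
The plan is to reduce $f(\la)$ to an explicit product of $\Delta_1(\la)$ with an entire function; the claim then follows because $\Delta_1$ has a zero of multiplicity $m_n$ at $\la_n$. First, I would invoke the identity $M(\la)\Delta_1(\la) = -\Delta_0(\la)$, which holds as an equality of \emph{entire} functions: the pole of $M$ of order at most $m_n$ at $\la_n$ is cancelled by the zero of $\Delta_1$ of order $m_n$, while away from the spectrum the identity is immediate from \eqref{fracD01}. Consequently $f(\la)$ is entire in $\la$ and can be rewritten purely in terms of $r_1(\la)$, $\Delta_0(\la)$, and $\varphi(\pi,\la)$, eliminating the meromorphic factor $M(\la)$ from the analysis.

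The key algebraic input is the decomposition of the Jost-type solution $\psi(x,\la)$ in the basis $\{\varphi(x,\la),S(x,\la)\}$ of solutions to \eqref{eqv}. Expanding $\psi(x,\la) = A(\la)\varphi(x,\la) + B(\la)S(x,\la)$ and matching initial conditions at $x = 0$ gives $A(\la) = \psi(0,\la)$ and $B(\la) = \psi^{[1]}(0,\la)$. Evaluating the Wronskians $W[\varphi,\psi]$ and $W[S,\psi]$ at both $x = 0$ and $x = \pi$ and using \eqref{initpsi} then shows $\psi(0,\la) = \Delta_0(\la)$ and $\psi^{[1]}(0,\la) = -\Delta_1(\la)$. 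Setting $x = \pi$ and using $\psi(\pi,\la) = r_1(\la)$ yields the key identity
\[
r_1(\la) \;=\; \Delta_0(\la)\,\varphi(\pi,\la) \;-\; \Delta_1(\la)\,S(\pi,\la).
\]

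Substituting this relation into the reformulation of $f(\la)$ from the first step should collapse $f(\la)$ to a product of $\Delta_1(\la)$ with an entire factor proportional to $S(\pi,\la)$, modulo a sign check against the paper's conventions. Since $S(\pi,\la)$ is entire in $\la$ and $\Delta_1$ vanishes to order $m_n$ at $\la_n$, the desired factorization, and hence the conclusion, follows. The main technical obstacles I anticipate are (i) justifying that $M(\la)\Delta_1(\la)$ is genuinely entire so that $f(\la)$ is unambiguously defined at $\la_n$, and (ii) scrupulous sign bookkeeping in the Wronskian computations, since an incorrect cancellation at the $r_1(\la)$-level would destroy the result. A more conceptual alternative, suggested by the remark preceding the lemma, would be to differentiate $f(\la)$ up to order $m_n - 1$ at $\la_n$ and verify by induction that $f^{\langle j\rangle}(\la_n) = 0$ for $j = 0,\ldots,m_n-1$, exploiting the chain structure of $\{\varphi^{\langle j\rangle}(x,\la_n)\}$ and $\{\psi^{\langle j\rangle}(x,\la_n)\}$ as associated functions in the sense of \cite{Nai68}.
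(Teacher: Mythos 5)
Your route is genuinely different from the paper's. The paper works with $f(\la) = r_1(\la) + \psi(0,\la)\varphi(\pi,\la)$, uses the proportionality $\psi(x,\la_n) = \beta_n\varphi(x,\la_n)$ of \eqref{yn} to get $f(\la_n)=0$, and then treats the derivatives through the chain structure of the associated functions $\{\varphi^{\langle j\rangle}(x,\la_n)\}$ and $\{\psi^{\langle j\rangle}(x,\la_n)\}$ --- and it only writes out the case $m_n = 2$, asserting that the general case is ``similar but more technically complicated.'' Your argument is instead purely algebraic: the Wronskian identities $\psi(0,\la)=\Delta_0(\la)$ and $\psi^{[1]}(0,\la)=-\Delta_1(\la)$ and the expansion of $\psi$ in the basis $\{\varphi,S\}$ are all correct, and the resulting identity $r_1(\la) = \Delta_0(\la)\varphi(\pi,\la) - \Delta_1(\la)S(\pi,\la)$ reduces the lemma to the observation that $\Delta_1$ vanishes to order $m_n$ at $\la_n$. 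This is uniform in the multiplicity and dispenses with the associated-function machinery entirely. (Your ``more conceptual alternative'' at the end --- differentiating $f$ up to order $m_n-1$ and exploiting the chains --- is essentially the paper's own proof.)

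However, the sign check you deferred is exactly where the argument lives or dies, and with the paper's stated conventions it dies. From \eqref{fracD01} one has $M(\la)\Delta_1(\la) = -\Delta_0(\la)$, hence $-M(\la)\Delta_1(\la)\varphi(\pi,\la) = +\Delta_0(\la)\varphi(\pi,\la)$, and substituting your key identity gives
\[
f(\la) \;=\; r_1(\la) + \Delta_0(\la)\varphi(\pi,\la) \;=\; 2\,\Delta_0(\la)\varphi(\pi,\la) - \Delta_1(\la)S(\pi,\la) .
\]
The cancellation at the $r_1$-level does not occur, and $f(\la_n) = 2\Delta_0(\la_n)\varphi(\pi,\la_n) = 2\beta_n\varphi(\pi,\la_n)$, which is nonzero unless $r_1(\la_n)=0$. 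What your method actually proves is the lemma for $r_1(\la) + M(\la)\Delta_1(\la)\varphi(\pi,\la) = r_1(\la) - \Delta_0(\la)\varphi(\pi,\la) = -\Delta_1(\la)S(\pi,\la)$, i.e.\ with the opposite sign in front of $M\Delta_1\varphi(\pi,\cdot)$. The paper's own verification of $f(\la_n)=0$ (via $\psi(0,\la_n)=\beta_n$ and $r_1(\la_n)=\beta_n\varphi(\pi,\la_n)$) requires the same sign flip, so your computation has exposed an inconsistency in the paper's conventions rather than a defect of your strategy; but a complete proof must commit to the sign for which the factorization holds instead of leaving it as a pending check.
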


\begin{proof}
Let us prove the lemma for $m_n = 2$. In the general case, the arguments are similar but more technically complicated.

Using \eqref{Mpsi}, we get $f(\la) = r_1(\la) + \psi(0,\la) \varphi(\pi, \la)$. Then, the relation $f(\la_n) = 0$ trivially follows from \eqref{yn} and the initial conditions \eqref{initphi}, \eqref{initpsi} for $\varphi(x, \la)$ and $\psi(x, \la)$.

Next, $\varphi^{\langle 1 \rangle}(x, \la_n)$ and $\dfrac{1}{\beta_n}\psi^{\langle 1 \rangle}(x, \la_n)$ are the associated functions corresponding to the same eigenfunction $\varphi(x, \la_n)$. Hence, $\varphi^{\langle 1 \rangle}(x, \la_n) - \dfrac{1}{\beta_n}\psi^{\langle 1 \rangle}(x, \la_n)$ is an eigenfunction, so
$$
\varphi^{\langle 1 \rangle}(x, \la_n) - \dfrac{1}{\beta_n}\psi^{\langle 1 \rangle}(x, \la_n) = c_n\psi(x,\la_n) = c_n\beta_n\varphi(x, \la_n),
$$ 
where $c_n$ is a constant.
Then, using \eqref{initphi}, \eqref{initpsi}, and \eqref{yn}, we obtain
\begin{multline*}
f^{\langle 1 \rangle}(\la_n) = r_1^{\langle 1 \rangle}(\la_n) - \varphi(\pi, \la_n)\psi^{\langle 1 \rangle}(0, \la_n) - \psi(\pi, \la_n)\varphi^{\langle 1 \rangle}(0, \la_n) \\
= r_1^{\langle 1 \rangle}(\la_n) - r_1(\la_n)\big( \varphi^{\langle 1 \rangle}(0, \la_n) - c_n\beta_n\varphi(0, \la_n) \big) - c_n\beta_n\psi(\pi, \la_n) - \psi^{\langle 1 \rangle}(\pi, \la_n) = 0.
\end{multline*}

Thus, $\la_n$ is at least a double zero of $f(\la)$.
\end{proof}

\section{Main equation} \label{sec:maineq}

In this section, we reduce Inverse Problem~\ref{ip:main3} to the linear main equation in the Banach space $C(\Gamma_N)$ of continuous functions, basing on the results of \cite{ChitBond}. Furthermore, we study the properties of the operator $\tilde Q(x)$ in the main equation and of the solution $\varphi(x, \la)$, which are used in the next sections for proving the local solvability of the inverse problem.

Let us consider two boundary value problems $L = L(\sigma, r_1, r_2)$ and $\tilde L = L(\tilde\sigma, \tilde r_1, \tilde r_2)$. Assume that $\sigma, \tilde \sigma \in L_2(0,\pi)$, $(r_1, r_2), (\tilde r_1, \tilde r_2) \in \mathcal R_p$, and $p = \tilde p$. Note that the quasi-derivatives corresponding to $L$ and $\tilde L$ are different: $y^{[1]} = y' - \sigma y$ and $\tilde y^{[1]} = y' - \tilde \sigma y$, respectively.

Introduce the notations
\begin{equation*} 
\def\arraystretch{1.5}
\begin{array}{c}
\la_{n0} = \la_n, \quad \la_{n1} = \tilde \la_n, \quad \rho_{n0} = \rho_n, \quad \rho_{n1} = \tilde \rho_n, \quad \alpha_{n0} = \alpha_n, \quad \alpha_{n1} = \tilde \alpha_n, \\
I_0 = I, \quad I_1 = \tilde I, \quad m_{n0} = m_n, \quad m_{n1} = \tilde m_n, \\
\varphi_{n+j, i}(x) = \varphi^{\langle j \rangle}(x, \la_{ni}), \quad \tilde \varphi_{n+j, i}(x) = \tilde \varphi^{\langle j \rangle}(x, \la_{ni}), \quad n \in I_i, \quad j=\overline{0, m_{ni}}.
\end{array} 
\end{equation*}

In this section, we assume that $\la_{n0} = \la_{n1}$ and $\alpha_{n0} = \alpha_{n1}$ for $n \ge N$ and $\{ \la_n \}_{n = 1}^{N-1}, \, \{ \tilde \la_n \}_{n = 1}^{N-1} \subset \mbox{int} \, \Gamma_N$ for some index $N$. Thus, we consider a finite perturbation of the spectral data.
Then, the results of \cite{ChitBond} imply the following proposition for the solutions $\varphi(x, \la)$ and $\Phi(x, \la)$.

\begin{prop}[\cite{ChitBond}]
	The following representations hold:
	\begin{gather} \label {series_varphi}
	\varphi(x, \la) = \tilde \varphi(x, \la) - \dfrac{1}{2\pi i} \oint_{\Gamma_N} {\tilde D(x, \la, \mu)\varphi(x, \mu) \hat M_N(\mu) d\mu},\\ \label {series_phi}
	\Phi(x, \la) = \tilde \Phi(x, \la) - \dfrac{1}{2\pi i} \oint_{\Gamma_N} {\tilde E(x, \la, \mu)\varphi(x, \mu) \hat M_N(\mu) d\mu},
	\end{gather}
	where
	\begin{equation} \notag
	\tilde D(x, \la, \mu) = \frac { \tilde \varphi(x, \la)\tilde \varphi^{[1]}(x, \mu) - \tilde \varphi^{[1]}(x, \la)\tilde \varphi(x, \mu)}{\la-\mu},
	\end{equation}
	\begin{equation} \notag
	 \tilde E(x, \la, \mu) = \frac { \tilde \Phi(x, \la) \tilde \varphi^{[1]}(x, \mu) - \tilde \Phi^{[1]}(x, \la) \tilde \varphi(x, \mu)}{\la-\mu}.
	\end{equation}
\end{prop}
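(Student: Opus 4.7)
The plan is to derive both representations by the contour integration technique of the method of spectral mappings, following the same pattern used in \cite{FrYu, BondTamkang}. A preparatory step is to rewrite the kernels $\tilde D(x, \la, \mu)$ and $\tilde E(x, \la, \mu)$ as functions with removable apparent singularities at $\mu = \la$. Using Lagrange's identity for equation \eqref{eqv} with the initial conditions \eqref{initphi}, one obtains
\[
\tilde D(x, \la, \mu) = \int_0^x \tilde\varphi(t, \la)\, \tilde\varphi(t, \mu)\, dt,
\]
so $\tilde D(x, \la, \cdot)$ is entire; the analogous identity with $\tilde \Phi$ in place of one $\tilde \varphi$ factor shows that $\tilde E(x, \la, \cdot)$ is meromorphic with its only singularities inherited from those of $\tilde\Phi$, namely $\{\tilde \la_n\}_{n \in \tilde I}$.

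Next, I would consider the contour integral
\[
\mathcal J_\varphi(x, \la) := \frac{1}{2\pi i}\oint_{\Gamma_N} \tilde D(x, \la, \mu)\, \varphi(x, \mu)\, \hat M_N(\mu)\, d\mu
\]
and evaluate it in two ways. By the residue theorem, since $\tilde D$ and $\varphi$ are entire in $\mu$, the integral reduces to a finite sum of residues at the poles of $\hat M_N$ lying inside $\Gamma_N$, which by \eqref{mn_func} are exactly $\{\la_n\}_{n \in I,\, n < N}$ and $\{\tilde\la_n\}_{n \in \tilde I,\, n < N}$. On the other hand, using the decomposition $\Phi(x, \mu) = S(x, \mu) + M(\mu)\varphi(x, \mu)$ (and its analogue for $\tilde L$), one expresses $\hat M_N(\mu) \varphi(x, \mu)$ in terms of $\Phi$, $\tilde\Phi$, $\varphi$, $\tilde\varphi$ modulo functions entire in $\mu$, and then the integrand's dependence on $\mu$ is precisely what is needed so that the contour integral equals $\tilde\varphi(x, \la) - \varphi(x, \la)$. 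Equating the two evaluations, together with the cancellation of residues at indices $n \ge N$ guaranteed by the assumption $\la_n = \tilde\la_n$, $\alpha_n = \tilde\alpha_n$ there, yields \eqref{series_varphi}. Formula \eqref{series_phi} is obtained by the same procedure with $\tilde D$ replaced by $\tilde E$, the only extra point being that the poles of $\tilde \Phi$ and of $\tilde E(x, \la, \cdot)$ in $\mu$ must be grouped with the poles of $\hat M_N$ at the same points so that the net contribution reproduces $\tilde \Phi(x, \la) - \Phi(x, \la)$ after rearrangement.

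The main obstacle is the presence of multiple eigenvalues. At a pole $\la_n$ of $\hat M_N$ of order $m_n$, the residue of the integrand must be extracted by a Leibniz-type expansion that couples the weight numbers $\{\alpha_{n+k}\}_{k=0}^{m_n-1}$ with the chain of associated functions $\{\varphi^{\langle j \rangle}(x, \la_n)\}_{j=0}^{m_n-1}$, arising as derivatives in $\mu$ of $\varphi(x, \mu)$ and of $\tilde D(x, \la, \mu)$. Verifying that the resulting double sum, together with the mirror double sum over the poles $\{\tilde\la_n\}$, collapses algebraically to $\tilde\varphi(x, \la) - \varphi(x, \la)$ is the technically delicate point; once this identification is performed, the stated representations follow, the remaining work being only routine uniformity estimates on the compact contour $\Gamma_N$.
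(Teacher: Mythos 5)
Your preparatory step is sound: the Lagrange identity does give $\tilde D(x,\la,\mu)=\int_0^x\tilde\varphi(t,\la)\tilde\varphi(t,\mu)\,dt$, so $\tilde D$ is entire in both spectral variables and the only interior poles of the integrand in \eqref{series_varphi} are those of $\hat M_N$ (the paper itself uses this identity in the proof of Lemma~\ref{Q_est}). Note, however, that the paper does not prove this proposition at all — it is imported from \cite{ChitBond} — so your sketch must stand on its own, and it has a genuine gap at its central step. Your ``second evaluation'' of $\mathcal J_\varphi$ is asserted rather than derived: you state that after substituting $\Phi=S+M\varphi$ the integrand's dependence on $\mu$ ``is precisely what is needed'' for the integral to equal $\tilde\varphi(x,\la)-\varphi(x,\la)$, and later that the residue double sums ``collapse algebraically'' to this difference. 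That collapse \emph{is} the content of the proposition, and nothing in your argument supplies an independent expression for $\varphi-\tilde\varphi$ against which the residue sum could be checked; as written the scheme is circular. The missing mechanism is the matrix of spectral mappings: one writes $\varphi(x,\la)=P_{11}(x,\la)\tilde\varphi(x,\la)+P_{12}(x,\la)\tilde\varphi^{[1]}(x,\la)$ with $P_{11}=\varphi\tilde\Phi^{[1]}-\Phi\tilde\varphi^{[1]}$ and $P_{12}=\Phi\tilde\varphi-\varphi\tilde\Phi$, checks that under the hypotheses $\la_n=\tilde\la_n$, $\alpha_n=\tilde\alpha_n$ for $n\ge N$ the principal parts of $P_{11}$, $P_{12}$ at the common poles cancel, so that $P_{11}-1$ and $P_{12}$ are analytic outside $\Gamma_N$ and vanish at infinity, and then applies Cauchy's integral formula in the exterior of $\Gamma_N$. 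Substituting the Wronskian expressions, using $\Phi=S+M\varphi$ and discarding the entire parts produces exactly the kernel $\tilde D(x,\la,\mu)\varphi(x,\mu)\hat M_N(\mu)$; replacing $\hat M$ by $\hat M_N$ costs nothing because their difference is entire under the stated hypotheses. This step, or an equivalent substitute, cannot be omitted.

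There is also a secondary error in your analytic bookkeeping for \eqref{series_phi}. The function $\tilde E(x,\la,\cdot)$ is not ``meromorphic with singularities inherited from $\tilde\Phi$ at $\{\tilde\la_n\}$'': the Lagrange identity gives $\tilde E(x,\la,\mu)=-(\la-\mu)^{-1}+\int_0^x\tilde\Phi(t,\la)\tilde\varphi(t,\mu)\,dt$, so as a function of $\mu$ its only singularity is a simple pole at $\mu=\la$, while the poles at $\tilde\la_n$ live in the variable $\la$. Hence your plan to ``group the poles of $\tilde\Phi$ and of $\tilde E(x,\la,\cdot)$ in $\mu$ with the poles of $\hat M_N$'' does not describe the actual residue computation: for $\la$ outside $\Gamma_N$ the interior poles are again only those of $\hat M_N$, and for $\la$ inside one must instead account for the extra pole at $\mu=\la$.
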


Recall that $C(\Gamma_N)$ is the space of the continuous functions on the contour $\Gamma_N$ with the norm $\|f\|_{C(\Gamma_N)} = \max\limits_{\la \in \Gamma_N} |f(\la)|$. For each fixed $x \in  [0,\pi]$, define a linear operator $\tilde Q(x) \colon C(\Gamma_N) \to C(\Gamma_N)$ as follows: 
\begin{gather}\label{operator_Q}
(\tilde Q(x) f)(\la) = \dfrac{1}{2\pi i}\oint\limits_{\Gamma_N}{\tilde D(x, \la, \mu)\hat M_N(\mu)f(\mu) \, d\mu}, \quad f \in C(\Gamma_N).
\end{gather}

Then, the relation \eqref{series_varphi} can be rewritten as
\begin{gather}\label{main_varphi}
\tilde\varphi(x, \la) = (E+\tilde Q(x))\varphi(x, \la),
\end{gather}
where $E$ is the identity operator in $C(\Gamma_N)$. Equation \eqref{main_varphi} is called the main equation of Inverse Problem~\ref{ip:main3}. Indeed, if we know the problem $\tilde L$ and the spectral data $\{ \la_n, \alpha_n\}_{n \ge 1}$ of the problem $L$ such that $\la_n = \tilde \la_n$, $\alpha_n = \tilde \alpha_n$ for $n \ge N$, then we can construct the function $\tilde \varphi(x, \la)$ and the operator $\tilde Q(x)$, find $\varphi(x, \la)$ by solving equation \eqref{main_varphi}, and use $\varphi(x, \la)$ to obtain the coefficients $\sigma$, $r_1$, and $r_2$ of the problem $L$. In order to prove the unique solvability of the main equation \eqref{main_varphi}, we need the following lemma.

\begin{lem} \label{Q_est}
$\tilde Q(x)$ is continuous with respect to $x \in [0,\pi]$ in the space of linear bounded operators from $C(\Gamma_N)$ to $C(\Gamma_N)$. Moreover,
the following estimate holds:
\begin{equation} \label{estQ}
\|\tilde Q(x)\|_{C(\Gamma_N) \to C(\Gamma_N)} \le C\delta_1, \quad x \in [0,\pi],
\end{equation}
where $\delta_1 = \max\limits_{\la \in \Gamma_N} |\hat M_N(\la)|$ and $C$ depends only on the problem $\tilde L$.
\end{lem}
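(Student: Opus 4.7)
The plan is to exploit the fact that, despite its appearance, the kernel $\tilde D(x,\la,\mu)$ is jointly continuous (and in fact analytic in $(\la,\mu)$) on $[0,\pi]\times\Gamma_N\times\Gamma_N$, and then to reduce the lemma to a routine $ML$-type estimate for the contour integral. First I would verify the removable-singularity property: the numerator
$$ \tilde\varphi(x,\la)\tilde\varphi^{[1]}(x,\mu)-\tilde\varphi^{[1]}(x,\la)\tilde\varphi(x,\mu) $$
vanishes identically on the diagonal $\la=\mu$ for every fixed $x\in[0,\pi]$. Since $\tilde\varphi(x,\cdot)$ and $\tilde\varphi^{[1]}(x,\cdot)$ are entire for each fixed $x$, the numerator is jointly entire in $(\la,\mu)$, so the quotient by $\la-\mu$ extends to a jointly entire function. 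More concretely, using the Hadamard-type identity
$$ \frac{F(\la)-F(\mu)}{\la-\mu}=\int_0^1 F'\bigl(\mu+t(\la-\mu)\bigr)\,dt, $$
applied to $F(\la)=\tilde\varphi(x,\la)\tilde\varphi^{[1]}(x,\mu)-\tilde\varphi^{[1]}(x,\la)\tilde\varphi(x,\mu)$ (viewed as a function of $\la$ for fixed $\mu,x$), one represents $\tilde D(x,\la,\mu)$ as an integral of a jointly continuous integrand, which gives joint continuity in $(x,\la,\mu)$.

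Next, on the compact set $[0,\pi]\times\Gamma_N\times\Gamma_N$ the continuous function $\tilde D$ attains a finite maximum $M_0$ that depends only on $\tilde L$ and on $N$ (hence, since $N$ is fixed by $\tilde L$, only on $\tilde L$). The poles $\{\tilde\la_n\}_{n=1}^{N-1}$ of $\tilde M_N$ (and, by hypothesis, of $M_N$) lie strictly inside $\Gamma_N$, so $\hat M_N$ is continuous on $\Gamma_N$ and $\delta_1$ is finite. With $\ell=\mathrm{len}(\Gamma_N)$ a constant depending only on $\tilde L$, for any $f\in C(\Gamma_N)$ and any $\la\in\Gamma_N$ the triangle inequality for contour integrals gives
$$ |(\tilde Q(x)f)(\la)|\le\frac{1}{2\pi}\oint_{\Gamma_N}|\tilde D(x,\la,\mu)|\,|\hat M_N(\mu)|\,|f(\mu)|\,|d\mu|\le\frac{M_0\ell}{2\pi}\,\delta_1\,\|f\|_{C(\Gamma_N)}, $$
which is the desired bound \eqref{estQ} with $C=M_0\ell/(2\pi)$.

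For the continuity of $x\mapsto\tilde Q(x)$ in the operator norm, I would apply uniform continuity of $\tilde D$ on the compact set $[0,\pi]\times\Gamma_N\times\Gamma_N$: for every $\varepsilon>0$ there exists $\eta>0$ such that $|\tilde D(x,\la,\mu)-\tilde D(x',\la,\mu)|<\varepsilon$ whenever $|x-x'|<\eta$, uniformly in $(\la,\mu)$. The same $ML$-estimate then yields
$$ \|\tilde Q(x)-\tilde Q(x')\|_{C(\Gamma_N)\to C(\Gamma_N)}\le\frac{\ell}{2\pi}\,\delta_1\,\varepsilon, $$
so the map $x\mapsto\tilde Q(x)$ is (uniformly) continuous on $[0,\pi]$.

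The only non-routine point is the removable-singularity step that turns $\tilde D$ from a formal quotient into a manifestly continuous kernel; once this is justified, everything else is standard estimation of a contour integral against a bounded kernel times $\hat M_N$ times $f$. I would therefore present the Hadamard integral representation explicitly in the write-up, since it both establishes the continuity and provides the uniform bound $M_0$ directly in terms of $\max|\tilde\varphi|$, $\max|\tilde\varphi^{[1]}|$, and $\max|\partial_\la\tilde\varphi|$, $\max|\partial_\la\tilde\varphi^{[1]}|$ on the compact set $[0,\pi]\times\Gamma_N$.
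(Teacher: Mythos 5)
Your proof is correct, and its overall architecture (establish joint continuity and boundedness of the kernel $\tilde D$ on the compact set $[0,\pi]\times\Gamma_N\times\Gamma_N$, then conclude by an $ML$-estimate of the contour integral, with uniform continuity of $\tilde D$ in $x$ giving operator-norm continuity) coincides with the paper's. The one place where you diverge is the treatment of the apparent singularity at $\la=\mu$: you argue generically, noting that the numerator vanishes on the diagonal and invoking the Hadamard-type representation $\frac{F(\la)-F(\mu)}{\la-\mu}=\int_0^1 F'\bigl(\mu+t(\la-\mu)\bigr)\,dt$, whereas the paper uses the Sturm--Liouville-specific identity
\begin{equation*}
\tilde D(x,\la,\mu)=\int_0^x \tilde\varphi(t,\la)\,\tilde\varphi(t,\mu)\,dt,
\end{equation*}
which follows from Lagrange's identity for equation \eqref{eqv} and exhibits $\tilde D$ at once as jointly continuous and entire in $(\la,\mu)$, with the bound $M_0$ expressed directly through $\max|\tilde\varphi|$ on $[0,\pi]\times\mathrm{int}\,\Gamma_N$. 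Your route buys generality (it needs nothing about the differential equation, only entirety of $\tilde\varphi(x,\cdot)$, $\tilde\varphi^{[1]}(x,\cdot)$ and joint continuity of their $\la$-derivatives in $(x,\la)$, plus convexity of the disk bounded by $\Gamma_N$ so the segment from $\mu$ to $\la$ stays in a compact set); the paper's identity is shorter and is in any case the standard device of the method of spectral mappings. Both yield the constant $C$ depending only on $\tilde L$ since $N$ and $\Gamma_N$ are determined by $\tilde L$.
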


\begin{proof}
By definition, we have
\begin{equation} \label{defnorm}
\|\tilde Q(x)\|_{C(\Gamma_N) \to C(\Gamma_N)} = \sup\limits_{f \in C(\Gamma_N), f \neq 0} \dfrac{\|\tilde Q(x) f\|_{C(\Gamma_N)}}{\|f\|_{C(\Gamma_N)}}.
\end{equation}

The function $\tilde D(x, \la, \mu)$ can be represented in the following integral form:
$$
\tilde D(x, \la, \mu) = \int\limits_{0}^{x} \tilde\varphi(t, \la)\tilde\varphi(t, \mu)dt,
$$
so this function is entire in $\la$ and $\mu$. 
Moreover, $\tilde\varphi(x, \la)$ and $\tilde D(x, \la, \mu)$ are continuous over the sets of their variables. This together with \eqref{operator_Q} and \eqref{defnorm} imply the continuity of $\tilde Q(x)$ by $x \in [0,\pi]$.
Furthermore, we obtain
$$
\|\tilde Q(x) f\|_{C(\Gamma_N)} \le C\max\limits_{\la, \mu \in \Gamma_N}|\tilde D(x, \la, \mu)|\max\limits_{\mu \in \Gamma_N}|\hat M_N(\la)|\max\limits_{\la \in \Gamma_N} |f(\la)| \le C\delta_1\max\limits_{\la \in \Gamma_N} |f(\la)|.
$$

Combining the latter estimate with \eqref{defnorm}, we arrive at \eqref{estQ}
\end{proof}

\begin{cor} \label{unique_solv}
If $\delta_1 > 0$ is sufficiently small, then the operator $(E + \tilde Q(x))$ has a bounded inverse operator in $C(\Gamma_N)$ and so the main equation \eqref{main_varphi} is uniquely solvable for each fixed $x \in [0,\pi]$.
\end{cor}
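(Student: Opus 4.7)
The plan is to invoke the Neumann series in the Banach algebra of bounded linear operators on $C(\Gamma_N)$, with Lemma~\ref{Q_est} supplying the required norm bound. Since $C(\Gamma_N)$ (continuous complex-valued functions on a compact contour, with the sup norm) is a Banach space, the space $\mathcal{B}(C(\Gamma_N))$ of bounded linear operators on it is a Banach algebra, and any operator of the form $E + A$ with $\|A\| < 1$ is invertible with $(E+A)^{-1} = \sum_{k=0}^{\infty}(-A)^{k}$ and $\|(E+A)^{-1}\| \le (1-\|A\|)^{-1}$.

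First, I would fix the constant $C$ from the estimate \eqref{estQ} of Lemma~\ref{Q_est} and impose $\delta_1 < 1/C$, or more conservatively $\delta_1 \le \delta_1^*$ for some $\delta_1^* < 1/C$ depending only on $\tilde L$. Then, uniformly in $x \in [0,\pi]$,
\[
\|\tilde Q(x)\|_{C(\Gamma_N) \to C(\Gamma_N)} \le C\delta_1 < 1.
\]
This immediately yields that $(E + \tilde Q(x))$ has a bounded inverse in $C(\Gamma_N)$, defined by the norm-convergent Neumann series, with the uniform bound $\|(E + \tilde Q(x))^{-1}\| \le (1 - C\delta_1)^{-1}$ for every $x \in [0,\pi]$.

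Consequently, for each fixed $x$ the main equation \eqref{main_varphi}, which reads $(E + \tilde Q(x))\varphi(x,\cdot) = \tilde\varphi(x,\cdot)$ in $C(\Gamma_N)$, has the unique solution $\varphi(x,\cdot) = (E + \tilde Q(x))^{-1}\tilde\varphi(x,\cdot)$. The continuity of $\tilde Q(x)$ in $x$ established in Lemma~\ref{Q_est} is not needed for the existence and uniqueness at a single $x$, but it guarantees that the inverse operator $(E + \tilde Q(x))^{-1}$ also depends continuously on $x \in [0,\pi]$, which will be useful in later sections.

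There is essentially no serious obstacle: the statement is an immediate corollary of the operator-norm estimate \eqref{estQ}, and the entire argument reduces to a standard Neumann-series invocation once $\delta_1$ is taken small enough that $C\delta_1 < 1$.
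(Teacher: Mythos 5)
Your proof is correct and matches the paper's intended argument: the corollary is stated without explicit proof precisely because it follows from the estimate \eqref{estQ} of Lemma~\ref{Q_est} via the standard Neumann-series inversion once $C\delta_1 < 1$. Your additional remark on the uniform bound for $(E+\tilde Q(x))^{-1}$ and its continuity in $x$ is consistent with how the paper later uses $\tilde P(x) = (E+\tilde Q(x))^{-1}$ in Lemma~\ref{lem:varphi}.
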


\begin{lem} \label{lem:varphi}
Under the conditions of Corollary~\ref{unique_solv},	
the solution $\varphi(x, \la)$ of the main equation \eqref{main_varphi} is continuous with respect to $x \in [0,\pi]$ in the space $C(\Gamma_N)$.
Furthermore, $|\varphi(x, \la)| \le C$ for $x \in [0,\pi]$ and $\la \in \Gamma_N$, where $C$ does not depend on $x$ and $\la$.
\end{lem}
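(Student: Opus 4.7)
The plan is to read both claims directly off the identity
\[
\varphi(x,\la) = (E+\tilde Q(x))^{-1}\tilde\varphi(x,\la),
\]
which holds pointwise in $x \in [0,\pi]$ by Corollary~\ref{unique_solv}. Everything reduces to controlling three objects as $x$ varies: the free term $\tilde\varphi(x,\cdot)$ viewed as an element of $C(\Gamma_N)$, the operator $\tilde Q(x)$, and its resolvent $(E+\tilde Q(x))^{-1}$.

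First, I would establish the uniform bound. Since $\tilde\varphi(x,\la)$ is entire in $\la$ for each fixed $x$ and continuous in $x$ for each fixed $\la$ (it solves the regularized form of \eqref{eqv} with the initial conditions \eqref{initphi} and $\tilde\sigma \in L_2(0,\pi)$), it is jointly continuous on the compact set $[0,\pi]\times\Gamma_N$, so $M := \max_{x\in[0,\pi],\,\la\in\Gamma_N}|\tilde\varphi(x,\la)| < \infty$. Shrinking $\delta_1$ further (if necessary) so that $C\delta_1 \le \tfrac12$ in Lemma~\ref{Q_est}, a Neumann-series argument yields $\|(E+\tilde Q(x))^{-1}\|_{C(\Gamma_N)\to C(\Gamma_N)} \le 2$ uniformly in $x \in [0,\pi]$. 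Combining these gives $|\varphi(x,\la)| \le \|\varphi(x,\cdot)\|_{C(\Gamma_N)} \le 2M$ for all $x \in [0,\pi]$ and $\la \in \Gamma_N$.

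Second, for continuity in $x$, I fix $x_0 \in [0,\pi]$ and decompose
\begin{align*}
\varphi(x,\cdot) - \varphi(x_0,\cdot) &= (E+\tilde Q(x))^{-1}\bigl[\tilde\varphi(x,\cdot)-\tilde\varphi(x_0,\cdot)\bigr] \\
&\quad + \bigl[(E+\tilde Q(x))^{-1}-(E+\tilde Q(x_0))^{-1}\bigr]\tilde\varphi(x_0,\cdot).
\end{align*}
The first summand has norm at most $2\|\tilde\varphi(x,\cdot)-\tilde\varphi(x_0,\cdot)\|_{C(\Gamma_N)}$, which tends to zero as $x\to x_0$ by uniform continuity of $\tilde\varphi$ on $[0,\pi]\times\Gamma_N$. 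For the second, the standard resolvent identity
\[
(E+\tilde Q(x))^{-1}-(E+\tilde Q(x_0))^{-1} = -(E+\tilde Q(x))^{-1}\bigl[\tilde Q(x)-\tilde Q(x_0)\bigr](E+\tilde Q(x_0))^{-1},
\]
together with the uniform bound $\|(E+\tilde Q(\cdot))^{-1}\| \le 2$ and the operator-norm continuity of $x\mapsto \tilde Q(x)$ provided by Lemma~\ref{Q_est}, drives this term to zero as well.

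I do not expect a serious obstacle: the lemma is essentially a statement about a parameter-dependent linear equation, and every ingredient—joint continuity of $\tilde\varphi$, operator-norm continuity and smallness of $\tilde Q(x)$, and Neumann invertibility of $E+\tilde Q(x)$—is already in hand. The only point that deserves care is verifying that $x \mapsto \tilde\varphi(x,\cdot)$ is genuinely continuous into $C(\Gamma_N)$ rather than merely pointwise in $\la$, and this is immediate from compactness of $\Gamma_N$ and the joint continuity of $\tilde\varphi$.
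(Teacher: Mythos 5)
Your proposal is correct and follows essentially the same route as the paper: both read the claims off $\varphi(x,\cdot)=(E+\tilde Q(x))^{-1}\tilde\varphi(x,\cdot)$, use the operator-norm continuity of $\tilde Q(x)$ from Lemma~\ref{Q_est}, and control the inverse by a Neumann-series/resolvent-identity perturbation argument (the paper writes the series $\tilde P(x)-\tilde P(x_0)=\sum_k(\tilde Q(x_0)-\tilde Q(x))^k(\tilde P(x_0))^{k+1}$, which is your resolvent identity expanded). The only cosmetic difference is that you obtain the uniform bound on the inverse directly from smallness of $\delta_1$, while the paper deduces it from continuity of $\tilde P(x)$ on the compact interval $[0,\pi]$.
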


\begin{proof}
Put $\tilde P(x) = (E+\tilde Q(x))^{-1}$. Due to Corollary~\eqref{unique_solv}, we get that $\tilde P(x)$ is bounded for each $x \in [0, \pi]$. 
Fix $x_0 \in [0,\pi]$. By Lemma~\ref{Q_est},
the operator $\tilde Q(x)$ is continuous at the point $x = x_0$, so there exists $\varepsilon > 0$ such that, for each $x \in [0, \pi]$ satisfying $|x-x_0|<\varepsilon$, the following estimate holds: 
$$
\|\tilde Q(x_0) - \tilde Q(x)\| \le \dfrac{1}{2 \|\tilde P(x_0)\|}. 
$$
Then
$$
\tilde P(x) - \tilde P(x_0) = \sum\limits_k(\tilde Q(x_0) - \tilde Q(x))^k(\tilde P(x_0))^{k+1}.
$$
Hence
$$
\|\tilde P(x) - \tilde P(x_0)\| \le 2\|\tilde P(x_0)\|^2\|\tilde Q(x_0) - \tilde Q(x)\| \to 0,
$$
as $x \to x_0$. Since the point $x_0$ can be arbitrary, we conclude that 
$\tilde P(x)$ is continuous with respect to $x \in [0,\pi]$ in the space of linear operators from $C(\Gamma_N)$ to $C(\Gamma_N)$ 
and $\|\tilde P(x)\| \le C$. Note that the assertion of this lemma holds for the solution $\tilde \varphi(x, \la)$ corresponding to the problem $\tilde L$. Using the relation $\varphi(x, \la) = \tilde P(x) \tilde \varphi(x, \la)$, we arrive at the same assertion for $\varphi(x, \la)$.
\end{proof}

\section{Reconstruction formulas} \label{sec:rec}

In this section, we prove Theorem~\ref{mainthm} on the reconstruction formulas, which can be used for the recovery of $\sigma(x)$, $r_1(\la)$, and $r_2(\la)$ from the solution $\varphi(x, \la)$ of the main equation \eqref{main_varphi}. The formula \eqref{sigma_series} for $\sigma(x)$ is obtained analogously to the previous studies \cite{BondTamkang, ChitBond}, so here we mostly focus on construction of the polynomials. The reconstruction formulas \eqref{r1_series} and \eqref{r2_series} are deduced from the relation \eqref{series_varphi} for $\Phi(x, \la)$.

As in the previous section, we suppose that $\la_{n0} = \la_{n1}$, $\alpha_{n0} = \alpha_{n1}$ for $n \ge N$ and $\la_{nj} \in \mbox{int} \, \Gamma_N$ for $n < N$, $j = 0, 1$. Let us represent the right boundary condition \eqref{bcPhi} for $\tilde\Phi(x, \la)$ and $\Phi(x, \la)$ in the following form, supposing that $\tilde\Phi(\pi, \la) \neq 0$ and $\Phi(\pi, \la) \neq 0$:
\begin{equation} \label{fracPhi}
\dfrac{\tilde r_2(\la)}{\tilde r_1(\la)} = - \dfrac{\tilde\Phi^{[1]}(\pi, \la)}{\tilde\Phi(\pi, \la)}, \quad \dfrac{r_2(\la)}{r_1(\la)} = - \dfrac{\Phi^{[1]}(\pi, \la)}{\Phi(\pi, \la)}.
\end{equation}

Taking the quasi-derivative of $\Phi(x, \la)$ and due the \eqref{series_phi}, we can get that
\begin{align} \nonumber
\Phi^{[1]}(\pi, \la) = & \tilde\Phi^{[1]}(\pi, \la) + \tilde\Phi(\pi, \la)\dfrac{1}{2\pi i}\oint\limits_{\Gamma_N} {(\tilde\varphi(\pi, \mu)\varphi(\pi, \mu) - 1)\hat M_N(\mu)}d\mu - \\ \label{quasiPhi}
&\dfrac{1}{2\pi i}\oint\limits_{\Gamma_N} {\tilde E(\pi, \la, \mu)\varphi^{[1]}(\pi, \la)\hat M_N(\mu)}d\mu.
\end{align}

Using \eqref{series_phi}, \eqref{fracPhi}, and \eqref{quasiPhi}, we obtain
\begin{equation} \label{fracr12}
\dfrac{r_2(\la)}{r_1(\la)} = \dfrac{Y(\la)}{Z(\la)},
\end{equation}
where
\begin{align} \label{Z_series}
	Z(\la) := \tilde r_1(\la) - \dfrac{1}{2\pi i}\oint_{\Gamma_N}{\dfrac{\tilde r_1(\la)\tilde\varphi^{[1]}(\pi, \mu) + \tilde r_2(\la)\tilde\varphi(\pi, \mu)}{\la - \mu}\varphi(\pi, \mu)\hat M_N(\mu)}d\mu,
	\end{align}
	\begin{align}\notag 
	Y(\la) & := \tilde r_2(\la) - \tilde r_1(\la)\frac{1}{2\pi i}\oint_{\Gamma_N}{\big(\tilde\varphi(\pi, \mu)\varphi(\pi, \mu) - 1\big)\hat M_N(\mu)}d\mu\\ \notag
	&+\dfrac{1}{2\pi i}\oint_{\Gamma_N}{\dfrac{\tilde r_1(\la)\tilde\varphi^{[1]}(\pi, \mu) + \tilde r_2(\la)\tilde\varphi(\pi, \mu)}{\la - \mu}\varphi^{[1]}(\pi, \mu)\hat M_N(\mu)}d\mu.
	\end{align}

However, the functions $Z(\la)$ and $Y(\la)$ in general are not polynomials. For obtaining polynomials of the desired degrees ($\deg r_1 = p$, $\deg r_2 \le p$), we have to multiply $Z(\la)$ and $Y(\la)$ by the product $\prod\limits_{k = 1}^{N-1} \dfrac{\la - \la_k}{\la - \tilde \la_k}$. In order to justify this multiplication, we need the following lemma. 

\begin{lem}\label{lem:cz}
$Y^{\langle k \rangle}(\la_{n1})=Z^{\langle k \rangle}(\la_{n1})=0$, $n \in I$, $n < N$, $k = \overline{0, m_{n1}-1}$.
\end{lem}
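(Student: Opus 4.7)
The plan is to derive closed-form rational expressions
$$Z(\la) = \frac{r_1(\la)\tilde\Delta_1(\la)}{\Delta_1(\la)}, \qquad Y(\la) = \frac{r_2(\la)\tilde\Delta_1(\la)}{\Delta_1(\la)},$$
from which the claim is immediate: $\tilde\Delta_1(\la)$ has a zero of order $m_{n1}$ at $\la_{n1}=\tilde\la_n$, so both $Z$ and $Y$ vanish there to order at least $m_{n1}$, provided $\Delta_1(\la_{n1})\neq 0$.

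The first step is to compute the Weyl solution at $x=\pi$ in closed form. Writing $\Phi(x,\la) = M(\la)\varphi(x,\la) + S(x,\la)$ (valid since $\Phi(0,\la)=M(\la)$ and $\Phi^{[1]}(0,\la)=1$), substituting $M(\la) = -\Delta_0(\la)/\Delta_1(\la)$, and using the Wronskian identity $\varphi S^{[1]} - \varphi^{[1]} S \equiv 1$ together with the definitions \eqref{charfun}, a short computation yields
$$\Phi(\pi,\la) = -\frac{r_1(\la)}{\Delta_1(\la)}, \qquad \Phi^{[1]}(\pi,\la) = \frac{r_2(\la)}{\Delta_1(\la)},$$
and the analogous formulas for $\tilde L$. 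The second step is to factor, using the boundary condition $\tilde r_1(\la)\tilde\Phi^{[1]}(\pi,\la) + \tilde r_2(\la)\tilde\Phi(\pi,\la) = 0$,
$$\tilde r_1(\la)\tilde E(\pi,\la,\mu) = \tilde\Phi(\pi,\la) \cdot \frac{\tilde r_1(\la)\tilde\varphi^{[1]}(\pi,\mu) + \tilde r_2(\la)\tilde\varphi(\pi,\mu)}{\la - \mu}.$$
Multiplying \eqref{series_phi} at $x=\pi$ by $\tilde r_1(\la)$ and pulling $\tilde\Phi(\pi,\la)$ outside the integral yields the identity $\tilde r_1(\la)\Phi(\pi,\la) = \tilde\Phi(\pi,\la)Z(\la)$; applying the same manipulation to \eqref{quasiPhi} gives $\tilde r_1(\la)\Phi^{[1]}(\pi,\la) = -\tilde\Phi(\pi,\la)Y(\la)$. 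Substituting the closed forms from the first step into these two identities produces the claimed rational representations for $Z$ and $Y$.

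Once these are in hand, the conclusion is read off directly from the zero of $\tilde\Delta_1$ of order $m_{n1}$ at $\la_{n1}$. The main subtlety I anticipate is the interpretation of the integral in \eqref{Z_series}: the integrand has a pole at $\mu=\la$ when $\la$ lies inside $\Gamma_N$, so the naive value of the integral depends on whether $\la$ is inside or outside the contour. The identity $\tilde r_1(\la)\Phi(\pi,\la) = \tilde\Phi(\pi,\la)Z(\la)$ is derived for $\la$ outside $\Gamma_N$, and the rational expression above is the meromorphic continuation of $Z$ to the interior of $\Gamma_N$; this is precisely the function needed for the reconstruction formula \eqref{r1_series} to yield a polynomial, since the factor $\prod_{k<N}(\la-\la_k)/(\la-\tilde\la_k)$ exactly cancels the ratio $\tilde\Delta_1/\Delta_1$. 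In the exceptional case when $\la_{n1}$ happens to coincide with some eigenvalue $\la_j$ of $L$, the polynomial $r_1(\la)$ must absorb the apparent pole from $\Delta_1$, and the required vanishing of $Z$ to order $m_{n1}$ follows by a continuity argument in the perturbation.
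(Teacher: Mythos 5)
Your closed forms are correct: from $\Phi = S + M\varphi$ and the Wronskian identity one indeed gets $\Phi(\pi,\la) = -r_1(\la)/\Delta_1(\la)$, $\Phi^{[1]}(\pi,\la) = r_2(\la)/\Delta_1(\la)$, and the factorization of $\tilde r_1(\la)\tilde E(\pi,\la,\mu)$ via the boundary condition for $\tilde\Phi$ gives $\tilde r_1(\la)\Phi(\pi,\la)=\tilde\Phi(\pi,\la)Z(\la)$ and $\tilde r_1(\la)\Phi^{[1]}(\pi,\la)=-\tilde\Phi(\pi,\la)Y(\la)$, hence $Z = r_1\tilde\Delta_1/\Delta_1$ and $Y = r_2\tilde\Delta_1/\Delta_1$. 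This is a genuinely different and far shorter route than the paper's, which instead evaluates the contour integral by explicit residue calculus at the multiple poles and, for the derivatives, compares Taylor coefficients against the auxiliary function $\tilde f(\la)=\tilde r_1(\la)-\tilde M(\la)\tilde\Delta_1(\la)\tilde\varphi(\pi,\la)$ using Lemma~\ref{lem:f}. The trade-off is important, though: your argument uses the problem $L$ itself (its characteristic function $\Delta_1$ and its Weyl solution), which is legitimate under the hypotheses of Theorem~\ref{mainthm}, where both $L$ and $\tilde L$ are assumed to exist. But the paper re-invokes Lemma~\ref{lem:cz} in Section~\ref{sec:solvstab}, where $\varphi(\pi,\la)$ is merely the solution of the main equation and the existence of $L$ is exactly what is being proved; there your argument would be circular. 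The paper's heavier proof is built entirely from $\tilde L$, the prescribed data $\{\la_n,\alpha_n\}_{n<N}$, and the solution of the main equation, which is why it survives that transfer. You should at least flag that your proof covers only the two-problems setting.

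Your treatment of the exceptional case is a genuine gap. From $Z=r_1\tilde\Delta_1/\Delta_1$ the order of vanishing at $\la_{n1}=\tilde\la_n$ is $m_{n1}+\mathrm{ord}_{\la_{n1}}(r_1)-\mathrm{ord}_{\la_{n1}}(\Delta_1)$, and when $\la_{n1}$ coincides with an eigenvalue $\la_{k0}$ of $L$ there is no reason for $r_1$ to ``absorb'' the zero of $\Delta_1$: take $\la_1=\tilde\la_1$ simple with $\alpha_1\neq\tilde\alpha_1$ and $r_1(\la_1)\neq 0$; then $Z(\la_1)=r_1(\la_1)\,\tilde\Delta_1^{\langle 1\rangle}(\la_1)/\Delta_1^{\langle 1\rangle}(\la_1)\neq 0$. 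A ``continuity argument in the perturbation'' cannot rescue a pointwise identity that fails. What survives in that case is only the weaker statement actually needed downstream, namely that $Z\prod_k(\la-\la_{k0})/(\la-\la_{k1})$ is entire, because the offending factor $(\la-\la_{k0})/(\la-\la_{k1})$ is then itself regular; your own identity $\tilde\Delta_1/\Delta_1=\prod_k(\la-\la_{k1})/(\la-\la_{k0})$ makes this transparent. So either restrict the lemma to the non-degenerate configuration $\{\la_{k0}\}_{k<N}\cap\{\la_{k1}\}_{k<N}=\emptyset$, or replace the claimed pointwise vanishing by the entirety of $g_1,g_2$.
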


\begin{proof}
Let us prove the lemma for $Z(\la)$. The proof for $Y(\la)$ is analogous.
From \eqref{series_varphi} and \eqref{Z_series}, we get
\begin{gather} \label {varphi_res}
\varphi(\pi, \la) = \tilde \varphi(\pi, \la) - \sum\limits_{\la_{kj} \in int \, \Gamma_N} \mathop{\mathrm{Res}}_{\mu=\la_{kj}}\dfrac{\tilde\varphi(\pi, \la)\tilde\varphi^{[1]}(\pi, \mu)-\tilde\varphi^{[1]}(\pi, \la)\tilde\varphi(\pi, \mu)}{\la - \mu}\varphi(\pi, \mu)\hat M_N(\mu) \\ \label{Z_res}
Z(\la) = \tilde r_1(\la) - \sum\limits_{\la_{kj} \in int \, \Gamma_N} \mathop{\mathrm{Res}}_{\mu=\la_{kj}}{\dfrac{\tilde r_1(\la)\tilde\varphi^{[1]}(\pi, \mu) + \tilde r_2(\la)\tilde\varphi(\pi, \mu)}{\la - \mu}\varphi(\pi, \mu)\hat M_N(\mu)}.
\end{gather}

Multiplying \eqref{varphi_res} by $\dfrac{\tilde r_1(\la)}{\tilde\varphi(\pi, \la)}$ and using \eqref{charfun}, we obtain
\begin{align*}
\dfrac{\tilde r_1(\la)\varphi(\pi, \la)}{\tilde\varphi(\pi, \la)} &= \tilde r_1(\la) - \sum\limits_{\la_{kj} \in int \, \Gamma_N} \mathop{\mathrm{Res}}_{\mu=\la_{kj}}{\dfrac{\tilde r_1(\la)\tilde\varphi^{[1]}(\pi, \mu) + \tilde r_2(\la)\tilde\varphi(\pi, \mu)}{\la - \mu}\varphi(\pi, \mu)\hat M_N(\mu)} + \\
&\dfrac{\tilde\Delta(\la)}{\tilde\varphi(\pi, \la)} \sum\limits_{\la_{kj} \in int \, \Gamma_N} \mathop{\mathrm{Res}}_{\mu=\la_{kj}}\dfrac{\tilde\varphi(\pi, \mu)}{\la - \mu}\varphi(\pi, \mu)\hat M_N(\mu).
\end{align*}
Consequently,
\begin{gather} \label{z_la}
\dfrac{\tilde r_1(\la)\varphi(\pi, \la)}{\tilde\varphi(\pi, \la)} = Z(\la) + \dfrac{\tilde\Delta_1(\la)}{\tilde\varphi(\pi, \la)} \sum\limits_{\la_{kj} \in int \, \Gamma_N} \mathop{\mathrm{Res}}_{\mu=\la_{kj}}\dfrac{\tilde\varphi(\pi, \mu)}{\la - \mu}\varphi(\pi, \mu)\hat M_N(\mu)
\end{gather}

Counting residues in a general case, due the \eqref{mn_func} we can get
$$
\mathop{\mathrm{Res}}_{\mu=\la_{kj}}\dfrac{\tilde\varphi(\pi, \mu)}{\la - \mu}\varphi(\pi, \mu)\hat M_N(\mu) = \sum\limits_{i=0}^{m_{kj}-1}\alpha_{k+i, j}\sum\limits_{u=0}^{i}(\la-\la_{kj})^{u-i-1}\sum\limits_{s=0}^{u}\dfrac{\tilde\varphi_{k+s, j}(\pi)\varphi_{k+u-s, j}(\pi)}{s!(u-s)!}.
$$

Substituting $\la = \la_{n1}$ into \eqref{z_la}, we see that
$$
\dfrac{\tilde\Delta_1(\la_{n1})}{\tilde\varphi(\pi, \la_{n1})} \sum\limits_{\la_{kj} \in int \Gamma_N} \mathop{\mathrm{Res}}_{\mu=\la_{kj}}\dfrac{\tilde\varphi(\pi, \mu)}{\la - \mu}\varphi(\pi, \mu)\hat M_N(\mu) = 0 \quad \text{if}\:\la_{kj}\neq \la_{n1}.
$$

So, we need to find the residues only at $\la_{n1}$. Using \eqref{double_sum} and the powers of $(\la - \la_{n1})$ in the numerator and the denominators into account, we derive
\begin{align*}
\lim_{\la \to \la_{n1}} \dfrac{\tilde\Delta_1(\la)}{\tilde\varphi(\pi, \la)} \sum\limits_{i=0}^{m_{n1}-1}\alpha_{n+i, 1}\sum\limits_{u=0}^{i}(\la-\la_{n1})^{u-i-1}\sum\limits_{s=0}^{u}\dfrac{\tilde\varphi_{n+s, 1}(\pi)\varphi_{n+u-s, 1}(\pi)}{s!(u-s)!} = \\
\alpha_{n+m_{n1}-1, 1}\lim_{\la \to \la_{n1}} {\dfrac{\tilde\Delta_1(\la)\varphi_{n1}(\pi)}{(\la - \la_{n1})^{m_{n1}}}}.
\end{align*}

Using  L’H\^opital’s Rule, we get
$$
\lim_{\la \to \la_{n1}} {\dfrac{\tilde\Delta_1(\la)\varphi_{n1}(\pi)}{(\la - \la_{n1})^{m_{n1}}}} = \dfrac{\tilde\Delta_1^{\langle m_{n1} \rangle}(\la_{n1})\varphi_{n1}(\pi)}{m_{n1}!},
$$
and, consequently, substituting this into \eqref{z_la}, we obtain
\begin{gather} \label{for_z_zero}
\dfrac{\tilde r_1(\la_{n1})\varphi_{n1}(\pi)}{\tilde\varphi_{n1}(\pi)} = Z(\la_{n1}) - \dfrac{\tilde\Delta_1^{\langle m_{n1} \rangle}(\la_{n1})\varphi_{n1}(\pi)}{m_{n1}!}\alpha_{n+m_{n1}-1, 1}.
\end{gather}

Next, the relations \eqref{double_sum} and \eqref{Mpsi} imply
\begin{multline} \label{calc_alpha}
\alpha_{n+m_{n1}-1, 1} = \mathop{\mathrm{Res}}_{\la=\la_{n1}} (\la-\la_{n1})^{m_{n1}-1}\tilde M(\la) \\ =-\mathop{\mathrm{Res}}_{\la=\la_{n1}} \dfrac{\tilde\psi(0, \la)}{\dfrac{\tilde\Delta_1(\la)}{(\la-\la_{n1})^{m_{n1}-1}}} =- \dfrac{\tilde\psi_{n1}(0)}{\Bigg(\dfrac{\tilde\Delta_1(\la)}{(\la-\la_{n1})^{m_{n1}-1}}\Bigg)^{\langle 1 \rangle}\Bigg|_{\la = \la_{n1}}},
\end{multline}
where $\tilde\psi_{n1}(x) = \tilde \psi(x, \la_{n1})$.
Using  L’H\^opital’s Rule, we get 
$$
\lim_{\la \to \la_{n1}} {\Bigg(\dfrac{\tilde\Delta_1(\la)}{(\la-\la_{n1})^{m_{n1}-1}}\Bigg)^{\langle 1 \rangle}} = \dfrac{\tilde\Delta_1^{\langle m_{n1} \rangle}(\la_{n1})}{m_{n1}!}.
$$

Using \eqref{yn}, we have  $\tilde\psi_{n1}(x) = \tilde\beta_n\tilde\varphi_{n1}(x)$, where $\tilde\beta_n \ne 0$. Hence
\begin{equation} \label{psin1}
\tilde\psi_{n1}(0) = \tilde \beta_n\tilde\varphi_{n1}(0) = \dfrac{\tilde\psi_{n1}(\pi)}{\tilde\varphi_{n1}(\pi)} = \dfrac{\tilde r_1(\la_{n1})}{\tilde\varphi_{n1}(\pi)}.
\end{equation}
Combining \eqref{calc_alpha} and \eqref{psin1}, we get
\begin{gather}\notag
\alpha_{n+m_{n1}-1, 1} = -\dfrac{m_{n1}\tilde r_1(\la_{n1})}{\tilde\Delta_1^{\langle m_{n1} \rangle}(\la_{n1})\tilde\varphi_{n1}(\pi)}.
\end{gather}
Substituting the latter equality into \eqref{for_z_zero}, we get that $Z(\la_{n1}) = 0$.

For derivatives, this way is technically hard, so we use an alternative method.
Counting residues in \eqref{z_la}, we obtain
\begin{multline*}
\tilde\varphi(\pi, \la) Z(\la) = \tilde r_1(\la)\varphi(\pi, \la) \\ + \tilde\Delta_1(\la)\sum\limits_{\la_{kj} \in int \, \Gamma_N}\sum\limits_{i=0}^{m_{kj}-1}\alpha_{k+i, j}\sum\limits_{u=0}^{i}(\la-\la_{kj})^{u-i-1}\sum\limits_{s=0}^{u}\dfrac{\tilde\varphi_{k+s, j}(\pi)\varphi_{k+u-s, j}(\pi)}{s!(u-s)!}
\end{multline*}

Passing to the Taylor series, we get for $u = \overline{1,m_{n1}-1}$ that
\begin{align*}
\sum\limits_{k=0}^{u-1}\dfrac{\tilde\varphi_{n+k, 1}(\pi)Z^{\langle u-k \rangle}(\la_{n1})}{k!(u-k)!} &= \sum\limits_{k=0}^{u}\dfrac{\tilde r_1^{\langle k \rangle}(\la_{n1})\varphi_{n+u-k1}(\pi)}{k!(u-k)!} +\\
& \sum\limits_{k=0}^{u}\dfrac{\tilde\Delta_1^{\langle m_{n1}+u-k \rangle}(\la_{n1})}{(m_{n1}+u-k)!}\sum\limits_{i=0}^{k}\alpha_{n+m_{n1}-1-i, 1}\sum\limits_{s=0}^{k-i}\dfrac{\tilde\varphi_{n+s, 1}(\pi)\varphi_{n+i-s, 1}(\pi)}{s!(i-s)!}.
\end{align*}

Introduce the function $\tilde f(\la) = \tilde r_1(\la) - \tilde M(\la)\tilde\Delta_1(\la)\tilde\varphi(\pi, \la)$. From the Taylor series expansion, we obtain
\begin{gather*}
\tilde f^{\langle k \rangle} (\la_{n1}) = \tilde r^{\langle k \rangle}_1(\la_{n1}) + \sum\limits_{i=0}^{k}\alpha_{n+m_{n1}-1-i, 1}\sum\limits_{j=0}^{k-i}\dfrac{\tilde\Delta_1^{\langle m_{n1}+j \rangle}(\la_{n1})\tilde\varphi_{n+k-i-j, 1}(\pi)}{(m_{n1}+j)!(k-i-j)!}, \quad k=\overline{0, m_{n1} - 1}.
\end{gather*}

Comparing this expansion with the one for $\tilde\varphi(\pi, \la)Z(\la)$, we get
\begin{gather*}
\varphi_{n1}(\pi)\tilde f(\la_{n1}) = 0, \\
\varphi^{\langle 1 \rangle}_{n1}(\pi)\tilde f(\la_{n1}) + \varphi_{n1}(\pi)\tilde f^{\langle 1 \rangle}(\la_{n1}) = \tilde\varphi_{n1}(\pi)Z^{\langle 1 \rangle}(\la_{n1}), \\
\dots,\\
\big(\varphi(\pi, \la)\tilde f(\la)\big)^{\langle m_{n1}-1 \rangle}\bigg|_{\la = \la_{n1}} = \sum\limits_{k=0}^{m_{n1}-2}\dfrac{\tilde\varphi_{n+k, 1}(\pi)Z^{\langle m_{n1}-1-k \rangle}(\la_{n1})}{k!(m_{n1}-1-k)!}
\end{gather*}

By virtue of Lemma~\ref{lem:f}, $\tilde f^{\langle k \rangle}(\la_{n1}) = 0$, $k=\overline{0, m_{n1}-1}$. Consequently, 
$Z^{\langle k \rangle}(\la_{n1}) = 0$, $k=\overline{0, m_{n1}-1}$, which concludes the proof.
\end{proof}

Put
\begin{equation} \label{defg12}
g_1(\la) := Z(\la) \prod\limits_{k = 1}^{N-1} \dfrac{\la-\la_{k0}}{\la-\la_{k1}}, \quad g_2(\la) := Y(\la) \prod\limits_{k = 1}^{N-1} \dfrac{\la-\la_{k0}}{\la-\la_{k1}}.
\end{equation}

According to Lemma~\ref{lem:cz}, $g_1(\la)$ and $g_2(\la)$ are entire functions.

\begin{lem} \label{degree}
$(g_1, g_2) \in \mathcal R_p$, where $g_1(\la)$ and $g_2(\la)$ are defined by \eqref{defg12}.
\end{lem}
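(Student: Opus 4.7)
My plan is to combine Lemma~\ref{lem:cz} with an asymptotic analysis of the contour integrals defining $Z(\la)$ and $Y(\la)$, and then invoke the standard extension of Liouville's theorem: an entire function satisfying $|f(\la)|\le C|\la|^p$ at infinity is a polynomial of degree at most $p$.

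By Lemma~\ref{lem:cz}, each factor $(\la-\la_{k1})^{-1}$ arising from the denominator of the rational product in \eqref{defg12} is cancelled by a zero of $Z(\la)$ (respectively $Y(\la)$) of at least the corresponding multiplicity, so $g_1$ and $g_2$ are entire. To control their growth at infinity, I would first observe that on the fixed compact contour $\Gamma_N$ the factors $\tilde\varphi(\pi,\mu)$, $\tilde\varphi^{[1]}(\pi,\mu)$, $\hat M_N(\mu)$ are continuous, hence bounded, while $\varphi(\pi,\mu)$ and $\varphi^{[1]}(\pi,\mu)$ are bounded by Lemma~\ref{lem:varphi} and its quasi-derivative analogue. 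For $|\la|$ large enough that $\la$ lies well outside $\Gamma_N$, the bound $|\la-\mu|^{-1}\le C|\la|^{-1}$ holds uniformly in $\mu\in\Gamma_N$, and $|\tilde r_j(\la)|\le C|\la|^p$. Substituting these estimates into the integrals in \eqref{Z_series} and the analogous expression for $Y(\la)$ gives
\[
Z(\la) = \tilde r_1(\la) + O(|\la|^{p-1}) = \la^p + O(|\la|^{p-1}), \qquad Y(\la) = \tilde r_2(\la) - K\tilde r_1(\la) + O(|\la|^{p-1}) = O(|\la|^p),
\]
where $K = \frac{1}{2\pi i}\oint_{\Gamma_N}(\tilde\varphi(\pi,\mu)\varphi(\pi,\mu)-1)\hat M_N(\mu)\,d\mu$ is independent of $\la$.

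The remaining rational factor satisfies $\prod_{k=1}^{N-1}(\la-\la_{k0})/(\la-\la_{k1}) = 1+O(|\la|^{-1})$ as $|\la|\to\infty$. Multiplying yields $g_1(\la)=\la^p+O(|\la|^{p-1})$ and $g_2(\la)=O(|\la|^p)$ uniformly on circles of radius tending to infinity. Since $g_1$ and $g_2$ are entire with polynomial growth, the extended Liouville theorem forces them to be polynomials: $g_1$ of degree exactly $p$ with leading coefficient~$1$, and $g_2$ of degree at most $p$. Writing $g_2$ formally as a polynomial of degree $p$ with possibly vanishing top coefficients delivers $(g_1,g_2)\in\mathcal R_p$.

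The only point requiring care is the uniform bound on $\varphi^{[1]}(\pi,\mu)$ for $\mu\in\Gamma_N$, which is not stated explicitly but follows by an argument parallel to Lemma~\ref{lem:varphi} applied to the companion representation \eqref{series_phi} (or equivalently by quasi-differentiating the main equation \eqref{main_varphi}). Everything else is routine bookkeeping: continuity on a compact contour, the elementary bound $|\la-\mu|^{-1}=O(|\la|^{-1})$ for $\la$ far from $\Gamma_N$, and the standard Liouville-type argument for entire functions of polynomial growth.
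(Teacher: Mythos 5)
Your proof is correct, but it reaches the conclusion by a genuinely different route from the paper. The paper argues algebraically: each residue in \eqref{Z_res} is written explicitly as $\bigl(\tilde r_1(\la)A_{m_{kj}-1}(\la)+\tilde r_2(\la)B_{m_{kj}-1}(\la)\bigr)(\la-\la_{kj})^{-m_{kj}}$ with polynomial numerators of degree $m_{kj}-1$, so that $g_1(\la)$ becomes a monic polynomial of degree $2N+p-2$ divided by $\prod_{k=1}^{N-1}(\la-\la_{k1})^2$; Lemma~\ref{lem:cz} then allows the fraction to be reduced completely, giving degree $2N+p-2-2(N-1)=p$ and leading coefficient $1$ by direct count. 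You instead use entirety plus the growth bounds $Z(\la)=\la^p+O(|\la|^{p-1})$, $Y(\la)=O(|\la|^p)$ and the extended Liouville theorem; this is sound, since the bound $|\la-\mu|^{-1}\le C|\la|^{-1}$ for $\mu\in\Gamma_N$ and $|\la|$ large, together with boundedness of the integrand on the compact contour, gives exactly the stated asymptotics, and Cauchy's estimates on large circles finish the argument. Your approach avoids the residue bookkeeping; the paper's explicit rational representation makes transparent where all the poles sit and is closer in spirit to the interpolation argument used later in Lemma~\ref{polynomials_est}. Two small remarks. First, in justifying entirety you only mention cancelling the factors $(\la-\la_{k1})^{-1}$ coming from the product in \eqref{defg12}, but $Z$ and $Y$ themselves, in their residue form \eqref{Z_res}, have potential poles at both families $\la_{k0}$ and $\la_{k1}$; the former are absorbed by the numerator $\prod_{k=1}^{N-1}(\la-\la_{k0})$ of the product, while the latter are removable by \eqref{z_la} (the factor $\tilde\Delta_1(\la)$ kills them) and then become zeros of the required order by Lemma~\ref{lem:cz}. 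You should state this explicitly, since your Liouville argument needs genuine entirety. Second, your concern about the bound on $\varphi^{[1]}(\pi,\mu)$ is not an issue in the setting of Theorem~\ref{mainthm}, where both problems exist and all functions involved are entire, hence bounded on the compact contour $\Gamma_N$; in the solvability setting of Section~\ref{sec:solvstab} the bound is supplied by Lemma~\ref{quasider}.
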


\begin{proof}
One can easily check that each of the residues in the formula \eqref{Z_res} can be represented in the form
$$
\dfrac{\tilde r_1(\la) A_{m_{kj} - 1}(\la) + \tilde r_2(\la) B_{m_{kj} - 1}(\la)}{(\la - \la_{kj})^{m_{kj}}},
$$
where $A_{m_{kj} - 1}(\la)$ and $B_{m_{kj} - 1}(\la)$ are polynomials of degree $m_{kj} - 1$. Then, the sum of these residues has a polynomial of degree not greater than $2(N-1)-2+p = 2N+p-4$ in the numerator.
So, we get
\begin{align*}
g_1(\la) = & \prod\limits_{k = 1}^{N-1} \dfrac{\la-\la_{k0}}{\la-\la_{k1}} \left( \tilde r_1(\la) - \sum\limits_{\la_{kj} \in int \, \Gamma_N} \mathop{\mathrm{Res}}_{\mu=\la_{kj}}{\dfrac{\tilde r_1(\la)\tilde\varphi^{[1]}(\pi, \mu) + \tilde r_2(\la)\tilde\varphi(\pi, \mu)}{\la - \mu}\varphi(\pi, \mu)\hat M_N(\mu)}\right) \\ =
& \dfrac{\sum\limits_{n=0}^{2N+p-3}s_n\la^n + \la^{2N+p-2}}{\prod\limits_{k = 1}^{N-1} (\la-\la_{k1})^{2}},
\end{align*}
where $s_n \in \mathbb C$, $n = \overline{0, 2N+p-3}$.

By virtue of Lemma~\ref{lem:cz}, this fraction can be reduced, so we get a polynomial of degree
$$
\mbox{deg} \, (g_1(\la)) = 2N+p-2 - 2(N-1) = p,
$$
and its leading coefficient equals $1$.
In the same way, one can prove that $g_2(\la)$ is a polynomial of degree not greater than $p$.
\end{proof}

The relations \eqref{fracr12}, \eqref{defg12} together with Lemma~\ref{degree} imply that $r_1(\la) \equiv g_1(\la)$, $r_2(\la) \equiv g_2(\la)$. This proves
the reconstruction formulae \eqref{r1_series} and \eqref{r2_series}.

\section{Local solvability and stability} \label{sec:solvstab}

In this section, we provide the proof of Theorem~\ref{stability_thm} on local solvability and stability of Inverse Problem~\ref{ip:main3}. The proof consists of the two steps:
\begin{enumerate}
	\item Perturbation of finite spectral data for $n < N$.
	\item Perturbation of simple eigenvalues for $n \ge N$.
\end{enumerate}

At the first step, we formulate Theorem~\ref{thm_mult} and prove it by relying on the results of Sections~\ref{sec:maineq} and~\ref{sec:rec}. Namely, we construct the solution $\varphi(x, \la)$ of the main equation \eqref{main_varphi}, find $\sigma(x)$, $r_1(\la)$, and $r_2(\la)$ by the reconstruction formulas of Theorem~\ref{mainthm}, and obtain all the needed properties for these functions. At the second step, we formulate the result for perturbation of simple eigenvalues from the previous study \cite{ChitBondStab} (see Theorem~\ref{stability_thm_m}). Finally, combining the two steps, we arrive at Theorem~\ref{stability_thm}.

\begin{thm} \label{thm_mult}
Let $\tilde L = L(\tilde \sigma, \tilde r_1, \tilde r_2)$ be a fixed boundary value problem of form \eqref{eqv}-\eqref{bc} with $\tilde \sigma \in L_2(0,\pi)$ and $(\tilde r_1, \tilde r_2) \in \mathcal R_p$, and let $N$ be the index defined in Notation~\ref{not:GammaN}.
Then, there exists $\delta_0 > 0$, depending on the problem $\tilde L$, such that, for any complex numbers $\{\la_n, \alpha_n\}_{n \ge 1}$ satisfying the conditions
\begin{gather*}
	\delta_1 := \max\limits_{\la \in \Gamma_N}|\hat M_N(\la)| \le \delta_0, \\
	\la_n = \tilde \la_n, \quad \alpha_n = \tilde \alpha_n, \quad n \ge N, 
\end{gather*}
there exist a complex-valued function $\sigma(x) \in L_2(0, \pi)$ and polynomials $(r_1, r_2)\in\mathcal R_p$, which are the solution of Inverse Problem~\ref{ip:main3} for the data $\{\la_n, \alpha_n\}_{n \ge 1}$. Moreover, the difference $(\sigma - \tilde \sigma)(x)$ is continuous on $[0,\pi]$ and
\begin{equation} \notag
	\|\sigma - \tilde\sigma\|_{C{[0, \pi]}} \le C\delta_1, \quad |c_n - \tilde c_n| \le C\delta_1, \quad |d_n - \tilde d_n| \le C\delta_1, \quad n = \overline{0, p},
\end{equation}
where the constant $C$ depends only on $\tilde L$.
\end{thm}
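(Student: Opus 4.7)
The plan is to take the perturbed spectral data $\{\la_n, \alpha_n\}_{n \ge 1}$ as input, form the function $\hat M_N(\la)$ and the operator $\tilde Q(x)$ on $\Gamma_N$, invert the main equation \eqref{main_varphi} to produce a candidate $\varphi(x, \la)$, and then define $\sigma$, $r_1$, $r_2$ by plugging $\varphi$ into the reconstruction formulas \eqref{sigma_series}--\eqref{r2_series}. The bulk of the work is then to verify that the resulting triple actually solves Inverse Problem~\ref{ip:main3}, while the stability estimates will fall out of the explicit formulas by majorization.

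Concretely, I would shrink $\delta_0$ so that Corollary~\ref{unique_solv} applies uniformly in $x \in [0,\pi]$; then Lemma~\ref{Q_est} yields $\|\tilde Q(x)\|_{C(\Gamma_N) \to C(\Gamma_N)} \le C\delta_1$, and $\tilde P(x) := (E + \tilde Q(x))^{-1}$ is well defined and uniformly bounded. Setting $\varphi(x, \la) := \tilde P(x)\tilde\varphi(x, \la)$, Lemma~\ref{lem:varphi} provides continuity of $\varphi(x, \cdot)$ in $x$ with values in $C(\Gamma_N)$ and the uniform bound $|\varphi(x, \la)| \le C$ on $[0,\pi] \times \Gamma_N$. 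With $\varphi$ in hand, formula \eqref{sigma_series} immediately defines a continuous function $\sigma - \tilde\sigma$ on $[0,\pi]$; majorizing the integrand gives the estimate $\|\sigma - \tilde\sigma\|_{C[0,\pi]} \le C\delta_1$, since $\hat M_N$ contributes a factor $\delta_1$ and the other ingredients are uniformly bounded on $[0,\pi] \times \Gamma_N$.

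The right-hand sides of \eqref{r1_series}, \eqref{r2_series} are a priori only rational functions of $\la$. However, the proofs of Lemmas~\ref{lem:cz} and~\ref{degree} are purely algebraic consequences of the main equation evaluated at $\la = \tilde\la_k$, so repeating them in the present direction shows that the poles at $\tilde\la_k$ cancel and the results are indeed polynomials forming a pair $(r_1, r_2) \in \mathcal R_p$. The coefficient estimates $|c_n - \tilde c_n|, |d_n - \tilde d_n| \le C\delta_1$ for $n = \overline{0, p}$ follow after expanding these polynomials in powers of $\la$ and bounding each contour integral; here one uses that the prefactor $\prod_{k=1}^{N-1}\tfrac{\la - \la_k}{\la - \tilde\la_k}$ depends continuously on the poles of $M_N$, and those poles cluster near the poles of $\tilde M_N$ by the remark following Theorem~\ref{stability_thm}.

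The main obstacle is the verification step: showing that the triple $(\sigma, r_1, r_2)$ just constructed is a genuine pre-image of the given spectral data under the direct spectral map $L \mapsto \{\la_n, \alpha_n\}$. My intended strategy, following the pattern of \cite{FrYu, BondTamkang, ChitBond}, is to differentiate the identity $(E + \tilde Q(x))\varphi(x, \la) = \tilde\varphi(x, \la)$ in $x$ twice, use that $\tilde\varphi$ satisfies the Sturm-Liouville equation with potential $\tilde\sigma$ and quasi-derivative $\tilde y^{[1]} = y' - \tilde\sigma y$, and then reconcile the extra terms produced by $\tfrac{d}{dx}\tilde Q(x)$ with the newly defined $\sigma$ through formula \eqref{sigma_series}; this yields that $\varphi$ solves \eqref{eqv} with the constructed $\sigma$, while the initial conditions \eqref{initphi} transfer from $\tilde\varphi$ because $\tilde Q(0)$ acts as a scalar multiple of $\tilde\varphi(0, \la) \equiv 1$. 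Once this is in place, the reconstruction formulas \eqref{r1_series}, \eqref{r2_series} together with \eqref{fracr12} and \eqref{charfun} produce the correct zero structure of $\Delta_1(\la)$ inside $\Gamma_N$ (giving the eigenvalues $\{\la_n\}_{n < N}$ with prescribed multiplicities), while the residue structure of $M(\la) - \tilde M(\la)$ inside $\Gamma_N$ reproduces $\hat M_N(\la)$ and vanishes outside, recovering the prescribed weight numbers $\{\alpha_n\}_{n < N}$ and preserving the untouched data $\{\la_n, \alpha_n\}_{n \ge N}$.
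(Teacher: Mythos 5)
Your proposal follows essentially the same route as the paper: solve the main equation via Corollary~\ref{unique_solv}, define $\sigma$, $r_1$, $r_2$ by the reconstruction formulas, reuse Lemmas~\ref{lem:cz} and~\ref{degree} to see that the rational expressions collapse to polynomials in $\mathcal R_p$, majorize the contour integrals for the stability bounds, and verify by differentiating the main equation that the constructed triple reproduces the prescribed spectral data. The only step you pass over is that the formulas \eqref{r1_series}--\eqref{r2_series} already require $\varphi^{[1]}(\pi,\la)$, whose existence, absolute continuity, and boundedness do not come for free from the main equation and are established in the paper by a separate argument (Lemma~\ref{quasider}, via the identity \eqref{defvarphi1} involving $\tilde P(x)$, $\tilde Q'(x)$, and $\hat\sigma$); this is implicit in your verification step but should be made explicit before the polynomials are defined.
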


The proof of Theorem~\ref{thm_mult} relies on several technical lemmas. Suppose that the problem $\tilde L$ and the data $\{ \la_n, \alpha_n\}_{n \ge 1}$ fulfill the hypothesis of Theorem~\ref{thm_mult} for so small $\delta_0 > 0$ that any $\delta_1 \le \delta_0$ satisfies the conditions of Corollary~\ref{unique_solv}. By using $\tilde L$ and $\{ \la_n, \alpha_n \}_{n = 1}^{N-1}$, construct the main equation \eqref{main_varphi}. By virtue of Corollary~\ref{unique_solv}, equation \eqref{main_varphi} is uniquely solvable. Furthermore, its solution $\varphi(x, \la)$ satisfies the conclusion of Lemma~\ref{lem:varphi}. Next, construct the function $\sigma(x)$ via \eqref{sigma_series}. The properties of this function are described by the following lemma.

\begin{lem} \label{est_sigma}
$\sigma \in L_2(0, \pi)$, 
$\hat\sigma(x) := \sigma(x) - \tilde\sigma(x) \in C[0, \pi]$ and $\|\hat\sigma(x)\|_{C{[0, \pi]}} \le C\delta_1$, where $C$ depends only on $\tilde L$.
\end{lem}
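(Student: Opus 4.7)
The plan is to read the assertion directly off the reconstruction formula \eqref{sigma_series}, which gives
\[
\hat\sigma(x) = -\frac{1}{2\pi i}\oint_{\Gamma_N} \bigl(2\tilde\varphi(x,\mu)\varphi(x,\mu) - 1\bigr)\hat M_N(\mu)\,d\mu,
\]
and to control this contour integral uniformly in $x$ using the bounds already established in Section~\ref{sec:maineq}. Since $\tilde\sigma\in L_2(0,\pi)$ by hypothesis, the $L_2$-membership of $\sigma$ will follow from $\hat\sigma\in C[0,\pi]\subset L_2(0,\pi)$, so the entire content of the lemma reduces to the continuity and the $C\delta_1$ bound for $\hat\sigma$.

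First I would assemble the ingredients. The function $\tilde\varphi(x,\mu)$ is entire in $\mu$ and, together with its quasi-derivative, continuous in $x\in[0,\pi]$, hence
$\sup_{x\in[0,\pi],\ \mu\in\Gamma_N}|\tilde\varphi(x,\mu)| \le C$
with $C$ depending only on $\tilde L$. Moreover, $x\mapsto\tilde\varphi(x,\cdot)$ is continuous as a map $[0,\pi]\to C(\Gamma_N)$. For $\varphi$, Lemma~\ref{lem:varphi} supplies exactly the two properties I need: continuity of $x\mapsto\varphi(x,\cdot)$ as a map $[0,\pi]\to C(\Gamma_N)$ and a uniform bound $|\varphi(x,\mu)|\le C$ on $[0,\pi]\times\Gamma_N$. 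Finally, by the definition of $\delta_1$ one has $\|\hat M_N\|_{C(\Gamma_N)}=\delta_1$.

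Next I would estimate the modulus of the integrand pointwise:
\[
\bigl|\bigl(2\tilde\varphi(x,\mu)\varphi(x,\mu) - 1\bigr)\hat M_N(\mu)\bigr| \le \bigl(2|\tilde\varphi(x,\mu)|\,|\varphi(x,\mu)| + 1\bigr)\delta_1 \le C\delta_1,
\]
uniformly in $x\in[0,\pi]$ and $\mu\in\Gamma_N$. Since $\Gamma_N$ has fixed finite length $2\pi(N-p-\tfrac{3}{2})^2$, integration gives $|\hat\sigma(x)|\le C\delta_1$ for every $x\in[0,\pi]$, which is the desired norm bound. For continuity, I write
\[
\hat\sigma(x)-\hat\sigma(x')
= -\frac{1}{2\pi i}\oint_{\Gamma_N} 2\bigl(\tilde\varphi(x,\mu)\varphi(x,\mu)-\tilde\varphi(x',\mu)\varphi(x',\mu)\bigr)\hat M_N(\mu)\,d\mu,
\]
split the difference in the usual way $(\tilde\varphi(x)-\tilde\varphi(x'))\varphi(x)+\tilde\varphi(x')(\varphi(x)-\varphi(x'))$, and use the joint continuity of $\tilde\varphi$ and the $C(\Gamma_N)$-continuity of $\varphi(x,\cdot)$ together with their uniform bounds to conclude that the integrand tends to zero uniformly in $\mu\in\Gamma_N$ as $x'\to x$. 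Hence $\hat\sigma\in C[0,\pi]$.

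I do not expect a real obstacle here: every required estimate on $\tilde\varphi$, $\varphi$, and $\hat M_N$ has already been proved in Section~\ref{sec:maineq}, and the reconstruction formula \eqref{sigma_series} is taken for granted from Theorem~\ref{mainthm}. The only mild point to be careful about is that the continuity of $\varphi(x,\cdot)$ and $\tilde\varphi(x,\cdot)$ is asserted in the Banach space $C(\Gamma_N)$, so when passing to the limit inside the integral one must invoke the uniformity in $\mu$ that this continuity provides, rather than merely pointwise continuity; but this is immediate from Lemma~\ref{lem:varphi}. Combining the bound and the continuity yields $\hat\sigma\in C[0,\pi]$, $\|\hat\sigma\|_{C[0,\pi]}\le C\delta_1$, and then $\sigma=\tilde\sigma+\hat\sigma\in L_2(0,\pi)$ as required.
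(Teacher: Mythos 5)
Your proposal is correct and follows exactly the paper's route: the paper also reads $\hat\sigma$ off the reconstruction formula \eqref{sigma_series} and states that the claim ``readily follows'' from that identity together with Lemma~\ref{lem:varphi}. You have simply filled in the uniform bound on the integrand and the continuity argument that the paper leaves implicit.
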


\begin{proof}
From \eqref{sigma_series} we get:
\begin{equation} \label{hatsigma}
\hat\sigma(x) = \dfrac{1}{2\pi i} \oint\limits_{\Gamma_N} {(1-2\tilde\varphi(x, \mu)\varphi(x, \mu))\hat M_N(\mu)}d\mu.
\end{equation}
The assertion of this lemma readily follows from \eqref{hatsigma} and Lemma~\ref{lem:varphi}.
\end{proof}

In addition to $\varphi(x, \la)$, for further constructions, we need the quasi-derivative $\varphi^{[1]}(x, \la) = \varphi'(x, \la) - \sigma(x) \varphi(x, \la)$. However, the existence of the derivative $\varphi'(x, \la)$ does not follow from the above arguments and has to be justified.

\begin{lem} \label{quasider}
The functions $\varphi(x, \la)$ and $\varphi^{[1]}(x, \la)$ are absolutely continuous with respect to $x \in [0,\pi]$ for each fixed $\la \in \Gamma_N$. Moreover, $\varphi^{[1]}(\pi, \la)$ belongs to $C(\Gamma_N)$ and
$|\varphi^{[1]}(\pi, \la)| \le C$ for $\la \in \Gamma_N$, where $C$ depends only on $\tilde L$.
\end{lem}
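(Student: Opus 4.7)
The plan is to differentiate the main equation \eqref{main_varphi} with respect to $x$, using the integral representation $\tilde D(x, \la, \mu) = \int_0^x \tilde\varphi(t, \la)\tilde\varphi(t, \mu)\,dt$ recalled in the proof of Lemma~\ref{Q_est}. This representation shows that $\tilde Q(x)$ is absolutely continuous in $x$ as a function into $\mathcal B(C(\Gamma_N))$, with operator-valued derivative
$$
\tilde Q'(x) f(\la) = \tilde\varphi(x, \la) \cdot \dfrac{1}{2\pi i}\oint_{\Gamma_N}\tilde\varphi(x, \mu)\hat M_N(\mu)f(\mu)\,d\mu,
$$
which is a rank-one bounded operator depending continuously on $x$, with norm controlled by $\delta_1$ and by quantities coming from $\tilde L$ only. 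Similarly, $\tilde\varphi(x, \cdot)$ is AC in $x$ with values in $C(\Gamma_N)$, with Bochner derivative $\tilde\varphi^{[1]}(x, \cdot) + \tilde\sigma(x)\tilde\varphi(x, \cdot)$ for a.e.\ $x$.

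Next, I would differentiate the main equation $(E+\tilde Q(x))\varphi(x, \cdot) = \tilde\varphi(x, \cdot)$ and invert, using that $(E+\tilde Q(x))^{-1}$ is uniformly bounded on $[0,\pi]$ by Corollary~\ref{unique_solv} and (the proof of) Lemma~\ref{lem:varphi}, to obtain
$$
\partial_x\varphi(x, \cdot) = (E + \tilde Q(x))^{-1}\bigl[\partial_x\tilde\varphi(x, \cdot) - \tilde Q'(x)\varphi(x, \cdot)\bigr].
$$
This expression shows that $\varphi(x, \la)$ is AC in $x$ for each fixed $\la \in \Gamma_N$ and that $\|\partial_x\varphi(x, \cdot)\|_{C(\Gamma_N)}$ is bounded by a constant depending only on $\tilde L$.

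To handle the quasi-derivative, I would write $\sigma = \tilde\sigma + \hat\sigma$ and $\varphi = \tilde\varphi + \hat\varphi$ and regroup:
$$
\varphi^{[1]}(x, \la) = \tilde\varphi^{[1]}(x, \la) + \bigl[\partial_x\hat\varphi(x, \la) - \tilde\sigma(x)\hat\varphi(x, \la)\bigr] - \hat\sigma(x)\varphi(x, \la).
$$
Here $\tilde\varphi^{[1]}(x, \la)$ is AC in $x$ by definition of $\tilde L$; the function $\hat\sigma$ is AC in $x$ in view of \eqref{hatsigma} together with the $C(\Gamma_N)$-valued AC of $\varphi(x, \cdot)$ from the previous step; and the bracketed expression, which is the $\tilde\sigma$-quasi-derivative of $\hat\varphi$, is AC in $x$ because $\hat\varphi$ admits the integral representation $\hat\varphi(x, \la) = -\frac{1}{2\pi i}\oint_{\Gamma_N}\tilde D(x, \la, \mu)\varphi(x, \mu)\hat M_N(\mu)\,d\mu$, and applying the $\tilde\sigma$-quasi-derivative in $x$ uses only the AC of $\tilde\varphi$, $\tilde\varphi^{[1]}$, and of $\varphi(x, \cdot)$ established above. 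Combining the uniform $C(\Gamma_N)$-bounds on $\varphi$, $\partial_x\varphi$, and the continuity of $\hat\sigma, \tilde\varphi^{[1]}$ in $\la \in \Gamma_N$ then yields both continuity of $\varphi^{[1]}(\pi, \la)$ on $\Gamma_N$ and the bound $|\varphi^{[1]}(\pi, \la)| \le C$.

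The main technical obstacle is precisely the quasi-derivative step: since $\sigma \in L_2(0,\pi)$ is not continuous, the product $\sigma(x)\varphi(x, \la)$ is not itself AC in $x$, and AC of $\varphi^{[1]}$ depends on a cancellation between the singular parts of $\sigma\varphi$ and $\partial_x\varphi$. This cancellation has to be traced through the regrouping above, using that $\hat\sigma$ has already been upgraded from $L_2$ to $C[0,\pi]$ in Lemma~\ref{est_sigma}, so that only the $\tilde\sigma$-quasi-derivative of $\hat\varphi$ remains to be analyzed, and that object is controlled by the AC structure of the kernel $\tilde D(x, \la, \mu)$.
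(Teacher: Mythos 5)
Your overall plan coincides with the paper's: quasi-differentiate the main equation \eqref{main_varphi}, invert $(E+\tilde Q(x))$, and exploit the fact that $\hat\sigma$ has already been upgraded to $C[0,\pi]$ in Lemma~\ref{est_sigma}; your decomposition $\varphi^{[1]} = \tilde\varphi^{[1]} + \bigl[\partial_x\hat\varphi - \tilde\sigma\hat\varphi\bigr] - \hat\sigma\varphi$ is algebraically equivalent to the paper's formula \eqref{defvarphi1}. However, the step that you yourself call ``the main technical obstacle'' is not resolved by what you write, and one auxiliary claim is false. The false claim first: $\|\partial_x\varphi(x,\cdot)\|_{C(\Gamma_N)}$ is \emph{not} bounded uniformly in $x$ by a constant depending only on $\tilde L$, since $\partial_x\tilde\varphi = \tilde\varphi^{[1]} + \tilde\sigma\tilde\varphi$ with $\tilde\sigma$ merely in $L_2(0,\pi)$; the same non-bounded $L_2$ part sits inside $\partial_x\varphi = \varphi^{[1]} + \sigma\varphi$. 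Only the quasi-derivative is uniformly bounded, and that is precisely the object under construction; yet in your final paragraph you invoke the ``uniform $C(\Gamma_N)$-bound on $\partial_x\varphi$'' to obtain $|\varphi^{[1]}(\pi,\la)| \le C$.

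The more serious issue is a circularity in the quasi-derivative step. When you apply the $\tilde\sigma$-quasi-derivative to $\hat\varphi(x,\la) = -\frac{1}{2\pi i}\oint_{\Gamma_N}\tilde D(x,\la,\mu)\varphi(x,\mu)\hat M_N(\mu)\,d\mu$, the product rule produces, besides the harmless contribution from $\partial_x\tilde D(x,\la,\mu) = \tilde\varphi(x,\la)\tilde\varphi(x,\mu)$, the term $\tilde D(x,\la,\mu)\bigl(\partial_x\varphi(x,\mu) - \tilde\sigma(x)\varphi(x,\mu)\bigr)$ under the integral sign, and $\partial_x\varphi(x,\mu) - \tilde\sigma(x)\varphi(x,\mu) = \varphi^{[1]}(x,\mu) + \hat\sigma(x)\varphi(x,\mu)$ is essentially the unknown itself. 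So the bracketed term is \emph{not} ``controlled by the AC structure of the kernel $\tilde D$'' alone: the resulting identity is an operator equation of the form \eqref{main_quasi_varphi} that must be inverted once more, which is exactly what the paper does to arrive at the explicit formula \eqref{defvarphi1} defining $\varphi^{[1]}$. Equivalently, in your two-stage setup one must note that multiplication by the scalar $\tilde\sigma(x)$ commutes with $\tilde P(x) = (E+\tilde Q(x))^{-1}$, so that $\partial_x\varphi - \tilde\sigma\varphi = \tilde P(x)\tilde\varphi^{[1]} - \tilde P(x)\tilde Q'(x)\varphi$, the singular $L_2$ parts cancelling; without this the argument assumes what it proves. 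Finally, to get absolute continuity of $\varphi^{[1]}$ (rather than mere continuity in $x$) one still has to differentiate the resulting formula once more and verify that each term, including $\tilde Q''(x)\varphi$ and $\hat\sigma'$, is integrable, as in \eqref{varphi1dif}; this verification is absent from your sketch.
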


\begin{proof}
From \eqref{main_varphi} we can get by formal calculations that
\begin{gather} \label{main_quasi_varphi}
\tilde\varphi^{[1]}(x, \la) = (E+\tilde Q(x))\varphi^{[1]}(x, \la) + (E+\tilde Q(x))\hat\sigma(x)\varphi(x, \la)+\tilde Q'(x)\varphi(x, \la).
\end{gather}
Hence
\begin{equation} \label{defvarphi1}
\varphi^{[1]}(x, \la) = \tilde P(x)\tilde\varphi^{[1]}(x, \la)-\hat\sigma(x)\varphi(x, \la)-\tilde P(x)\tilde Q'(x)\varphi(x, \la).
\end{equation}

From \eqref{operator_Q} we can get
$$
\tilde Q'(x)\varphi(x, \la) = \tilde\varphi(x, \la)\dfrac{1}{2\pi i}\oint\limits_{\Gamma_N} {\tilde\varphi(x, \mu)\hat M_N(\mu)\varphi(x, \mu)}d\mu.
$$
Consequently, $\tilde Q'(x)\varphi(x, \la)$ is continuous with respect to $x \in [0,\pi]$ in the space $C(\Gamma_N)$. Note that $\tilde P(x)$, $\tilde \varphi^{[1]}(x, \la)$, and $\hat \sigma(x)$ are also continuous with respect to $x \in [0,\pi]$ in the space of linear operators from $C(\Gamma_N)$ to $C(\Gamma_N)$, in $C(\Gamma_N)$, and as a complex-valued function, respectively. Hence, the right-hand side of \eqref{defvarphi1} is continuous by $x \in [0,\pi]$ in $C(\Gamma_N)$. Therefore, the relation \eqref{defvarphi1} can be used as the definition of $\varphi^{[1]}(x, \la)$. Hence, the function $\varphi'(x, \la) = \varphi^{[1]}(x, \la) + \sigma(x) \varphi(x, \la)$ is integrable by $x$ on $(0,\pi)$ for each fixed $\la \in \Gamma_N$.

From \eqref{main_quasi_varphi} we obtain by formal calculations that
\begin{multline} \label{varphi1dif}
(\varphi^{[1]}(x, \la))' = \tilde P(x)(\tilde\varphi^{[1]}(x, \la))' - \tilde P(x)\tilde Q'(x)(2\varphi^{[1]}(x, \la) + \sigma(x)\varphi(x, \la) + \hat\sigma(x)\varphi(x, \la)) \\ 
-\hat\sigma'(x)\varphi(x, \la)-\hat\sigma(x)\varphi'(x, \la) - \tilde P(x)\tilde Q''(x)\varphi(x, \la).
\end{multline}

Let us consider the behavior of the terms in the right-hand side of \eqref{varphi1dif} with respect to $x$.
As $\tilde\varphi^{[1]}(x, \la) \in AC{[0, \pi]}$, then $(\tilde\varphi^{[1]}(x, \la))' \in L_1(0, \pi)$ as a function of $x$.
Next,
\begin{multline*}
\tilde Q''(x)\varphi(x, \la) = \dfrac{1}{2\pi i}\oint_{\Gamma_N} (2\tilde\sigma(x)\tilde\varphi(x, \la)\tilde\varphi(x, \mu) + \tilde\varphi^{[1]}(x, \la)\tilde\varphi(x, \mu) \\ + \tilde\varphi(x, \la)\tilde\varphi^{[1]}(x, \mu)) \hat M_N(\mu) \varphi(x, \mu) d\mu \in L_2(0,\pi).
\end{multline*}
\vspace*{-10pt}
\begin{multline*}
\hat\sigma'(x) = -\dfrac{1}{\pi i}\oint_{\Gamma_N} {(\tilde\varphi^{[1]}(x, \mu)\varphi(x, \mu) + \tilde\varphi(x, \mu)\varphi^{[1]}(x, \mu))\hat M_N(\mu)}d\mu \\
-(\tilde\sigma(x) + \sigma(x))\dfrac{1}{\pi i}\oint_{\Gamma_N} {\tilde\varphi(x, \mu)\varphi(x, \mu)\hat M_N(\mu)}d\mu,
\end{multline*}
so $\hat\sigma' \in L_2(0, \pi)$ and $\hat\sigma \in AC{[0, \pi]}$.
Consequently, $(\varphi^{[1]}(x, \la))' \in L_1(0, \pi)$ and $\varphi^{[1]}(x, \la) \in AC[0, \pi]$ for each fixed $\la \in C(\Gamma_N)$.

The estimate for $|\varphi^{[1]}(\pi, \la)|$ is obtained from the relation \eqref{defvarphi1} and Lemmas~\ref{Q_est}, \ref{lem:varphi}, \ref{est_sigma}.
\end{proof}

Next, using the continuous functions $\varphi(\pi, \la)$ and $\varphi^{[1]}(\pi, \la)$ for $\la \in \Gamma_N$, we find $r_1(\la)$ and $r_2(\la)$ by the formulae \eqref{r1_series} and \eqref{r2_series}, respectively. Using \eqref{series_varphi}, we can analytically extend the functions $\varphi(x, \la)$ and $\varphi^{[1]}(x, \la)$ into the whole $\la$-plane. In particular, $\varphi(\pi, \la)$ and $\varphi^{[1]}(\pi, \la)$ become analytic inside the contour $\Gamma_N$, so the integrals in \eqref{r1_series} and \eqref{r2_series} can be computed by the Residue Theorem. Consequently, the proofs of Lemmas~\ref{lem:cz} and \ref{degree} remain valid in this case. By Lemma~\ref{degree}, the constructed functions $r_1(\la)$ and $r_2(\la)$ are polynomials and $(r_1, r_2) \in \mathcal R_p$. The stability of reconstruction for the coefficients $\{ c_n \}$ and $\{ d_n \}$ of these polynomials is established by the following lemma.

\begin{lem} \label{polynomials_est}
$|c_n - \tilde c_n| \le C\delta_1$, $|d_n - \tilde d_n| \le C\delta_1$, $n = \overline{0, p}$.
\end{lem}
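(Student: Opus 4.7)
My proof plan. The strategy is to show that $|r_j(\la) - \tilde r_j(\la)| \le C\delta_1$ for $j = 1, 2$ uniformly on an auxiliary circle $\Gamma_{R_0} = \{|\la| = R_0\}$ with $R_0 > R_N := (N-p-3/2)^2$ (so that $\Gamma_{R_0}$ encloses $\Gamma_N$), and then to extract the coefficient bounds via Cauchy's integral formula applied to the polynomial differences $r_j - \tilde r_j$.

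First I would estimate the contour integrals in \eqref{r1_series} and \eqref{r2_series}. For $\la \in \Gamma_{R_0}$ and $\mu \in \Gamma_N$ one has $|\la - \mu| \ge R_0 - R_N > 0$, the polynomials $\tilde r_1(\la)$ and $\tilde r_2(\la)$ are bounded on $\Gamma_{R_0}$, and the factors $\tilde\varphi(\pi, \mu)$, $\tilde\varphi^{[1]}(\pi, \mu)$, $\varphi(\pi, \mu)$, $\varphi^{[1]}(\pi, \mu)$ are all uniformly bounded by a constant depending only on $\tilde L$, by Lemmas~\ref{lem:varphi} and~\ref{quasider} together with the classical bounds for the model problem. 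Combined with $|\hat M_N(\mu)| \le \delta_1$, this shows that every integral appearing on the right-hand sides of \eqref{r1_series} and \eqref{r2_series} is $O(\delta_1)$ uniformly for $\la \in \Gamma_{R_0}$.

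The hard part will be controlling the rational prefactor $\Pi(\la) := \prod_{k=1}^{N-1}(\la-\la_k)/(\la-\tilde\la_k)$. Since all $\la_k$ and $\tilde\la_k$ lie inside $\Gamma_N$ (the inclusion for $\{\la_k\}$ being guaranteed by the version of [BondGaidel, Lemma A1] cited after Theorem~\ref{stability_thm}), $\Pi(\la)$ is automatically uniformly bounded on $\Gamma_{R_0}$; but to obtain $O(\delta_1)$-closeness to $1$ I need the quantitative pole-stability estimate $|\la_k - \tilde\la_k| \le C\delta_1$ for $k = \overline{1, N-1}$. I would derive this either by invoking the quantitative form of [BondGaidel, Lemma A1] (which exploits the fact that the map sending the poles and weights of a rational function of fixed type to its values on $\Gamma_N$ is locally invertible with Lipschitz inverse), or by a direct Rouch\'e-type argument applied to the denominator polynomial $\prod_{k=1}^{N-1}(\la-\la_k)$. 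Once this is granted, the algebraic identity
\[
\Pi(\la) - 1 = \frac{\prod_{k=1}^{N-1}(\la-\la_k) - \prod_{k=1}^{N-1}(\la-\tilde\la_k)}{\prod_{k=1}^{N-1}(\la-\tilde\la_k)}
\]
gives $\|\Pi - 1\|_{C(\Gamma_{R_0})} \le C\delta_1$, since the elementary symmetric functions of $\{\la_k\}$ and $\{\tilde\la_k\}$ then differ by $O(\delta_1)$.

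Combining these ingredients via the decomposition $r_1 - \tilde r_1 = (\Pi - 1)\tilde r_1 - \Pi\cdot\frac{1}{2\pi i}\oint_{\Gamma_N}\cdots\, d\mu$, and the analogous one for $r_2 - \tilde r_2$, yields $\|r_j - \tilde r_j\|_{C(\Gamma_{R_0})} \le C\delta_1$ for $j = 1, 2$. Since $r_j - \tilde r_j$ is a polynomial of degree at most $p$ by Lemma~\ref{degree}, Cauchy's coefficient formula
\[
c_n - \tilde c_n = \frac{1}{2\pi i}\oint_{\Gamma_{R_0}}\frac{r_1(\la) - \tilde r_1(\la)}{\la^{n+1}}\,d\la, \qquad n = \overline{0, p},
\]
then produces $|c_n - \tilde c_n| \le R_0^{-n}\|r_1 - \tilde r_1\|_{C(\Gamma_{R_0})} \le C\delta_1$, and the analogous bound for $|d_n - \tilde d_n|$ follows in exactly the same way.
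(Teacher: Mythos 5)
Your overall architecture (bound the contour integrals in \eqref{r1_series}--\eqref{r2_series}, show the prefactor $\Pi(\la)=\prod_{k=1}^{N-1}\frac{\la-\la_k}{\la-\tilde\la_k}$ is $O(\delta_1)$-close to $1$, then extract coefficient bounds from a uniform bound on the polynomial $r_j-\tilde r_j$) matches the paper's proof; the paper extracts the coefficients by an interpolation argument rather than Cauchy's formula on a larger circle, which is an immaterial difference. The integral estimates via Lemmas~\ref{lem:varphi} and \ref{quasider} and $|\hat M_N|\le\delta_1$ are exactly as in the paper.

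However, there is a genuine gap in your treatment of $\Pi(\la)$. The intermediate estimate you rely on, $|\la_k-\tilde\la_k|\le C\delta_1$ for $k=\overline{1,N-1}$, is \emph{false} in the setting of this paper, which is precisely about multiple eigenvalues that split under perturbation. If $\tilde\la_k$ is a pole of $\tilde M_N$ of multiplicity $m>1$, a perturbation of size $\delta_1$ of $M_N$ on $\Gamma_N$ typically moves the resulting simple poles by $O(\delta_1^{1/m})$, not $O(\delta_1)$: e.g.\ perturbing $\tilde\alpha/\la^2$ to $\tfrac{\tilde\alpha}{2}\bigl(\tfrac{1}{\la-\sqrt{\varepsilon}}+\tfrac{1}{\la+\sqrt{\varepsilon}}\bigr)$ changes the function on $\Gamma_N$ by $O(\varepsilon)$ while the poles move by $\sqrt{\varepsilon}$. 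A Rouch\'e argument (your fallback) only localizes the perturbed poles in small disks and cannot improve this. What \emph{is} true --- and what the paper uses --- is that the coefficients $q_k,\tilde q_k$ of the monic polynomials $\prod_{k=1}^{N-1}(\la-\la_k)$ and $\prod_{k=1}^{N-1}(\la-\tilde\la_k)$ satisfy $|q_k-\tilde q_k|\le C\delta_1$; this is Lemma~A3 of \cite{BondGaidel}, which obtains the symmetric functions directly from contour integrals of $\hat M_N$ over $\Gamma_N$ rather than from individual root locations (in the example above, $\la^2-\varepsilon$ versus $\la^2$). Your final conclusion $\|\Pi-1\|_{C(\Gamma_{R_0})}\le C\delta_1$ is therefore correct, but you must reach it by estimating the symmetric functions directly, not by routing through individual pole displacements. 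With that repair the rest of your argument goes through.
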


\begin{proof}

At first, let us estimate the term
$$
\Bigg|1 - \prod\limits_{k = 1}^{N-1} \dfrac{\la-\la_{k0}}{\la-\la_{k1}}\Bigg|.
$$

Denote
\begin{align*}
Q(\la) & = \prod\limits_{k = 1}^{N-1} (\la-\la_{k0}) = \sum\limits_{k=0}^{N-1}q_k\la^k, \\
\tilde Q(\la) & = \prod\limits_{k = 1}^{N-1} (\la-\la_{k1}) = \sum\limits_{k=0}^{N-1}\tilde q_k\la^k.
\end{align*}

The functions $\tilde M_N(\la)$ and $M_N(\la)$ (for sufficiently small $\delta_1$) satisfy the conditions of Lemma A3 from \cite{BondGaidel} with respect to the contour $\Gamma_N$. Therefore, $|q_k - \tilde q_k| \le C \delta_1$ for $k = \overline{1,N-1}$. Consequently,
$$
\Bigg|1-\dfrac{Q(\la)}{\tilde Q(\la)}\Bigg| \le \dfrac{\Bigg|\sum\limits_{k=0}^{N-1}(q_k-\tilde q_k)\la^k\Bigg|}{\Bigg|\sum\limits_{k=0}^{N-1}\tilde q_k\la^k\Bigg|} \le C\delta_1
$$
for $\lambda$ on compact sets excluding eigenvalues $\{ \la_{n1} \}_{n = 1}^{N-1}$.

Next, using \eqref{r1_series} and taking Lemmas~\ref{lem:varphi} and \ref{quasider} into account, we estimate
\begin{gather*}
|\tilde r_1(\la) - r_1(\la)| \le |\tilde r_1(\la)|\Bigg|1 - \prod\limits_{k = 1}^{N-1} \dfrac{\la-\la_{k0}}{\la-\la_{k1}}\Bigg| + \\
\Bigg|1 - \prod\limits_{k = 1}^{N-1} \dfrac{\la-\la_{k0}}{\la-\la_{k1}}\Bigg|\Bigg|\dfrac{1}{2\pi i}\oint\limits_{\Gamma_N}{\dfrac{\tilde r_1(\la)\tilde\varphi^{[1]}(\pi, \mu) + \tilde r_2(\la)\tilde\varphi(\pi, \mu)}{\la - \mu}\varphi(\pi, \mu)\hat M_N(\mu)}d\mu\Bigg| + \\
\Bigg|\dfrac{1}{2\pi i}\oint\limits_{\Gamma_N}{\dfrac{\tilde r_1(\la)\tilde\varphi^{[1]}(\pi, \mu) + \tilde r_2(\la)\tilde\varphi(\pi, \mu)}{\la - \mu}\varphi(\pi, \mu)\hat M_N(\mu)}d\mu\Bigg| \le C\delta_1.
\end{gather*}

Using the previous statement, we get that $|\tilde r_1(\la) - r_1(\la)| \le C\delta_1$ for $\la$ on compact sets. Applying interpolation argument, we deduce the following estimate for the polynomial coefficients: $|c_n - \tilde c_n| \le C\delta_1$, $n = \overline{0, p}$. For $r_2(\la)$ the proof is similar.
\end{proof}

Thus, we have reconstructed the function $\sigma \in L_2(0,\pi)$ and the polynomials $r_1(\la)$ and $r_2(\la)$. Consider the problem $L = L(\sigma, r_1, r_2)$. Using \eqref{series_phi}, we find $\Phi(x, \la)$ and its quasi-derivative. Then, the following lemma can be proved by technical calculations.

\begin{lem} \label{solutions}
The following relations hold:
\begin{gather*}
-(\varphi^{[1]}(x, \la))' - \sigma(x)\varphi^{[1]}(x, \la) - \sigma^2(x)\varphi(x, \la) = \la\varphi(x, \la),\\
\varphi(0, \la) = 1, \quad \varphi^{[1]}(0, \la) = 0;\\
-(\Phi^{[1]}(x, \la))' - \sigma(x)\Phi^{[1]}(x, \la) - \sigma^2(x)\Phi(x, \la) = \la\Phi(x, \la),\\
\Phi^{[1]}(0, \la) = 1, \quad r_1(\la)\Phi^{[1]}(\pi, \la) + r_2(\la)\Phi(\pi, \la) = 0.
\end{gather*}

Moreover, $\{\la_n, \alpha_n\}_{n \ge 1}$ are the spectral data of the problem $L$.
\end{lem}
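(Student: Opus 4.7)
The plan is to check in sequence that (i) $\varphi(x,\la)$ constructed via the main equation solves the Sturm-Liouville equation with coefficient $\sigma(x)$ together with the required initial conditions at $x=0$; (ii) $\Phi(x,\la)$ built from \eqref{series_phi} solves the same equation and fulfils both boundary conditions of the reconstructed $L$; and (iii) the Weyl function of this reconstructed $L$ has the prescribed pole structure, so that $\{\la_n,\alpha_n\}_{n\ge 1}$ are indeed its spectral data. The verification of the initial conditions for $\varphi$ proceeds by evaluating at $x=0$: since $\tilde D(0,\la,\mu)\equiv 0$ (from the integral representation of $\tilde D$ used in the proof of Lemma~\ref{Q_est}), we have $\tilde Q(0)=0$ and $\tilde P(0)=E$, whence $\varphi(0,\la)=\tilde\varphi(0,\la)=1$ by \eqref{main_varphi}. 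In \eqref{defvarphi1} the first summand then vanishes because $\tilde\varphi^{[1]}(0,\la)=0$, while the remaining two are explicit contour integrals of $\hat M_N$ computed from \eqref{hatsigma} and from the expression for $\tilde Q'(x)\varphi(x,\la)$ obtained in the proof of Lemma~\ref{quasider}; with $\tilde\varphi(0,\cdot)=\varphi(0,\cdot)=1$ these two integrals cancel and yield $\varphi^{[1]}(0,\la)=0$.

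The differential equation for $\varphi$ is then checked by forming the residual $-(\varphi^{[1]})'-\sigma\varphi^{[1]}-\sigma^2\varphi-\la\varphi$, substituting the expressions \eqref{defvarphi1} and \eqref{varphi1dif} for $\varphi^{[1]}$ and its derivative, and invoking the analogous equation for $\tilde\varphi$ together with the reconstruction formula \eqref{sigma_series} for $\sigma$. After regrouping, every surviving contribution is a contour integral over $\Gamma_N$ with kernel built from $\tilde\varphi$, $\tilde\varphi^{[1]}$ and the operators $\tilde Q'$, $\tilde Q''$ applied to $\varphi$; careful bookkeeping of these integrals, together with the explicit formula for $\hat\sigma'(x)$ derived in the proof of Lemma~\ref{quasider}, produces termwise cancellation, and the residual vanishes identically. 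This algebraic reduction is the main technical obstacle of the proof, as it requires tracking a large number of contour-integral contributions and matching them pairwise.

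For $\Phi$ the scheme is parallel but based on \eqref{series_phi} and \eqref{quasiPhi}. The initial condition $\Phi^{[1]}(0,\la)=1$ is obtained at $x=0$ using $\tilde\Phi^{[1]}(0,\la)=1$ together with $\tilde\varphi(0,\mu)=1$, $\tilde\varphi^{[1]}(0,\mu)=0$, which make the integral contribution in \eqref{quasiPhi} vanish. The right boundary condition $r_1(\la)\Phi^{[1]}(\pi,\la)+r_2(\la)\Phi(\pi,\la)=0$ is automatic: the polynomials $r_1,r_2$ defined by \eqref{r1_series}, \eqref{r2_series} coincide with the functions $g_1,g_2$ of Section~\ref{sec:rec}, which were constructed from \eqref{fracPhi} and \eqref{quasiPhi} precisely so that this identity holds. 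The Sturm-Liouville equation for $\Phi$ is then verified by the same residual argument as for $\varphi$.

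Finally, denote by $M^L$ the Weyl function of the reconstructed $L$. Since $M^L(\la)=\Phi(0,\la)$, evaluating \eqref{series_phi} at $x=0$ with $\varphi(0,\mu)=1$ and $\tilde E(0,\la,\mu)=-(\la-\mu)^{-1}$ gives
$$
M^L(\la)=\tilde M(\la)+\frac{1}{2\pi i}\oint_{\Gamma_N}\frac{\hat M_N(\mu)}{\la-\mu}\,d\mu.
$$
For $\la$ outside $\Gamma_N$, the residue theorem applied to $\hat M_N(\mu)/(\la-\mu)$, which decays like $\mu^{-2}$ at infinity, identifies the above integral with $\hat M_N(\la)$, so that $M^L(\la)=\tilde M(\la)+\hat M_N(\la)=M(\la)$ by virtue of the hypothesis $\la_n=\tilde\la_n$, $\alpha_n=\tilde\alpha_n$ for $n\ge N$. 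By meromorphic continuation $M^L\equiv M$ on $\mathbb C$, and the comparison of principal parts with \eqref{mn_func} and \eqref{double_sum} shows that $\{\la_n,\alpha_n\}_{n\ge 1}$ are the spectral data of $L$.
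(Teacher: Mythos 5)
The paper gives no proof of this lemma --- it states only that it ``can be proved by technical calculations'' --- and your outline is the standard verification underlying that remark: the evaluations at $x=0$ (including the cancellation of $-\hat\sigma(0)$ against $\tilde Q'(0)\varphi(0,\la)$ giving $\varphi^{[1]}(0,\la)=0$), the observation that the right boundary condition for $\Phi$ is built into the definitions of $Z$, $Y$ and hence of $r_1$, $r_2$, and the computation $M^L(\la)=\tilde M(\la)+\hat M_N(\la)$ via $\tilde E(0,\la,\mu)=-(\la-\mu)^{-1}$ are all correct. The only part you leave at the level of a sketch is the termwise cancellation establishing the differential equation for $\varphi$ and $\Phi$, which is precisely the ``technical calculation'' the paper itself omits, so your proposal is at least as complete as the paper's treatment.
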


Lemmas \ref{est_sigma}, \ref{polynomials_est}, \ref{solutions} together prove Theorem~\ref{thm_mult}.

Proceed to a perturbation of an infinite number of simple eigenvalues.

\begin{thm}[\cite{ChitBondStab}]\label{stability_thm_m}
	Let $\tilde L = L(\tilde \sigma, \tilde r_1, \tilde r_2)$ be a fixed boundary value problem of form \eqref{eqv}-\eqref{bc} with $\tilde \sigma \in L_2(0,\pi)$ and $(\tilde r_1, \tilde r_2) \in \mathcal R_p$. Let the eigenvalues $\tilde \la_n$ with numbers $n \ge N$ be simple.
	Then, there exists $\delta_0 > 0$, depending on the problem $\tilde L$, such that, for any complex numbers $\{\la_n, \alpha_n\}_{n \ge 1}$ satisfying the conditions
	\begin{gather*}
	\la_n = \tilde \la_n, \quad \alpha_n = \tilde \alpha_n, \quad n < N, \\
	\delta_2 := \bigg(\sum\limits_{n=N}^\infty(|\tilde \rho_n - \rho_n|+|\tilde\alpha_n - \alpha_n|)^2\bigg)^{\frac{1}{2}} \le \delta_0,
	\end{gather*}
	there exist a complex-valued function $\sigma \in L_2(0, \pi)$ and polynomials $(r_1, r_2)\in\mathcal R_p$, which are the solution of Inverse Problem~\ref{ip:main3} for the data $\{\la_n, \alpha_n\}_{n \ge 1}$. Moreover,
	\begin{equation*} 
	\|\sigma - \tilde\sigma\|_{L_2(0, \pi)} \le C\delta_2, \quad |c_n - \tilde c_n| \le C\delta_2, \quad |d_n - \tilde d_n| \le C\delta_2, \quad n = \overline{0, p},
	\end{equation*}
	where the constant $C$ depends only on $\tilde L$.
\end{thm}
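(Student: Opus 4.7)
The plan is to mimic the structure of the proof of Theorem~\ref{thm_mult} while replacing the finite-dimensional/contour ingredients by their infinite-dimensional counterparts adapted to the simple-spectrum tail. Since $\la_n = \tilde\la_n$ and $\alpha_n = \tilde\alpha_n$ for $n < N$ and only the simple eigenvalues $\{\tilde \la_n\}_{n \ge N}$ are perturbed, the natural Banach space is no longer $C(\Gamma_N)$ but a weighted space of bounded (or $l_2$) double-indexed sequences $\{ f_{n,i}\}_{n \ge N,\, i=0,1}$ encoding the values at the pair of corresponding eigenvalues $\la_n$ and $\tilde\la_n$. The whole perturbation is controlled by a single parameter, namely the $l_2$ quantity $\delta_2$ defined in the statement.

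First, I would introduce the vectors $\varphi_{n,i}(x) := \varphi(x,\la_{n,i})$ and $\tilde\varphi_{n,i}(x) := \tilde\varphi(x,\la_{n,i})$, $n \ge N$, $i=0,1$, together with the weight quantities $\alpha_{n,i}$, and, by specialising the contour representation \eqref{series_varphi} (now with poles outside $\Gamma_N$), derive a discrete main equation of the form
\begin{equation} \label{disc_main}
\tilde\varphi_{n,i}(x) \;=\; \varphi_{n,i}(x) \;+\; \sum_{k \ge N,\, j=0,1}(-1)^{j}\,\tilde D(x,\la_{n,i},\la_{k,j})\,\varphi_{k,j}(x)\,\hat\xi_{k,j},
\end{equation}
in which the coefficients $\hat\xi_{k,j}$ are linear expressions in $\tilde\rho_k - \rho_k$ and $\tilde\alpha_k - \alpha_k$, and $\tilde D(x,\cdot,\cdot)$ is the kernel from Section~\ref{sec:maineq}. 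Using the asymptotics $\tilde\varphi(x,\la) = \cos\rho x + O(\rho^{-1})$ and the standard eigenvalue asymptotics \eqref{la_asymp}, I would check that $\tilde D(x,\la_{n,i},\la_{k,j})$ enjoys the usual Hilbert-kernel-type bounds (each factor $\frac{1}{\la - \mu}$ is compensated by a difference of trigonometric-like terms), so that the operator defined by the right-hand side of \eqref{disc_main} is bounded on the chosen sequence space with norm $O(\delta_2)$.

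With this bound, for $\delta_0$ sufficiently small, $E + \tilde R(x)$ is invertible uniformly in $x \in [0,\pi]$ and \eqref{disc_main} is uniquely solvable; moreover $x \mapsto \varphi_{n,i}(x)$ is continuous and uniformly bounded. Next, in parallel to Theorem~\ref{mainthm}, I would convert the contour formulas \eqref{sigma_series}, \eqref{r1_series}, \eqref{r2_series} into series over $n \ge N$, $i=0,1$ (since the relevant poles are now outside $\Gamma_N$ and only the tail differs), producing
$$
\sigma(x) = \tilde\sigma(x) + \sum_{n \ge N, i}(\cdots)_{n,i},\qquad r_j(\la) = \tilde r_j(\la) + \sum_{n \ge N, i}(\cdots)_{n,i}(\la),\quad j=1,2,
$$
where each summand is quadratic in $\tilde\varphi, \varphi$ and linear in $\hat\xi$. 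The $l_2$ decay of $\{\hat\xi_{k,j}\}$ combined with $|\tilde\varphi_{n,i}(x)|, |\varphi_{n,i}(x)| \le C$ gives convergence of these series, and a Parseval-type estimate yields $\|\sigma - \tilde\sigma\|_{L_2(0,\pi)} \le C\delta_2$ together with the polynomial-coefficient estimates, the latter via the fact that only finitely many degrees of freedom must be extracted.

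The main obstacle is step two: proving $l_2$-type boundedness and the smallness $O(\delta_2)$ of the operator in \eqref{disc_main} in the presence of the Cauchy-like kernel $(\la_{n,i} - \la_{k,j})^{-1}$. This requires splitting the sum into diagonal, near-diagonal and far-from-diagonal parts and exploiting the $\sin\rho\pi$/$\cos\rho\pi$ factors and the asymptotics \eqref{la_asymp} to recover a Hilbert-matrix-type bound; simultaneously one must keep track that the derived $\sigma$ lies in $L_2(0,\pi)$ (not merely $C[0,\pi]$ as in Theorem~\ref{thm_mult}), which forces all estimates to be performed in the $L_2$ norm of $x$ and uses Bessel/Parseval inequalities rather than the crude uniform contour bound used for the finite perturbation.
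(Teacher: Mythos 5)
A preliminary but important observation: the paper does not prove this theorem at all. It is imported verbatim from \cite{ChitBondStab} (note the citation in the theorem header) and used as a black box in the second step of the proof of Theorem~\ref{stability_thm}. So there is no in-paper proof to compare against; your outline has to be judged against the method of \cite{ChitBondStab}, which is indeed the discrete, sequence-space analogue of Sections~\ref{sec:maineq}--\ref{sec:solvstab} that you describe: a linear main equation in a Banach space of double-indexed sequences, reconstruction by infinite series rather than contour integrals, and Bessel/Parseval estimates in place of the crude uniform bound on $\Gamma_N$. The paper itself says as much in the discussion following Theorem~\ref{mainthm}. At that level your plan has the right shape.

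However, your displayed main equation contains a genuine error that would sink the key operator-norm step. Computing the residues of $\frac{1}{2\pi i}\oint \tilde D(x,\la,\mu)\varphi(x,\mu)\hat M(\mu)\,d\mu$ at the simple poles $\la_{k,j}$ gives the coefficients $(-1)^{j}\alpha_{k,j}$, \emph{not} quantities that are ``linear expressions in $\tilde\rho_k-\rho_k$ and $\tilde\alpha_k-\alpha_k$'': the individual weight numbers $\alpha_{k,j}$ are not small in $k$, the series does not converge term by term, and the resulting operator certainly does not have norm $O(\delta_2)$ on the raw vector $(\varphi_{k,j})$. Smallness lives only in the \emph{combination} of the $j=0$ and $j=1$ terms, so one must regroup $\tilde D(\cdot,\la_{k0})\varphi_{k0}\alpha_{k0}-\tilde D(\cdot,\la_{k1})\varphi_{k1}\alpha_{k1}$ into difference brackets, each of which is $O(\xi_k)$ with $\xi_k=|\rho_{k0}-\rho_{k1}|+|\alpha_{k0}-\alpha_{k1}|$; and since one of those brackets involves the unknown itself, the standard device is to take as the unknown the renormalized vector $\chi_{k0}=\xi_k^{-1}(\varphi_{k0}-\varphi_{k1})$, $\chi_{k1}=\varphi_{k1}$. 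Only after this transformation does the matrix of the main equation satisfy a bound of the type $C\xi_k/(|n-k|+1)$, which via Cauchy--Schwarz yields the norm $O(\delta_2)$ and, downstream, the relation $\varphi_{k0}-\varphi_{k1}=O(\xi_k)$ that makes the series for $\hat\sigma$ converge in $L_2$ with the claimed bound. Your remarks about diagonal splitting and Hilbert-matrix bounds address the Cauchy kernel but not this renormalization of the unknown, and without it the argument as written does not close.
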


Now, Theorem~\ref{stability_thm} can be obtained by combining Theorems~\ref{thm_mult} and~\ref{stability_thm_m}. In order to show this, we will perturb the spectral data of the problem $\tilde L$ by parts. 
Let $\tilde L = L(\tilde \sigma, \tilde r_1, \tilde r_2)$ be a fixed problem satisfying the hypothesis of Theorem~\ref{thm_mult}.
First, we perturb the spectral data only for multiple eigenvalues. 
Consider any data $\{ \check \la_n, \check \alpha_n\}_{n \ge 1}$ satisfying the conditions of Theorem~\ref{thm_mult} with so small $\delta_1$ that the assertion of this theorem holds. That is,
$$
\check\la_n = \tilde\la_n, \quad \check\alpha_n = \tilde\alpha_n, \quad n \ge N,
$$
and $\{ \check\la_n, \check\alpha_n \}$  are arbitrarily perturbed values $\{ \tilde \la_n, \tilde \alpha_n\}$ for $n < N$, whose perturbation fulfills the estimate
$$
\delta_1 := \max_{\la \in \Gamma_N} |\check M_N(\la) - \tilde M_N(\la)| \le \delta_0.
$$
Then, $\{ \check\la_n, \check\alpha_n \}_{n \ge 1}$ are the spectral data of some problem $\check L = L(\check \sigma, \check r_1, \check r_2)$ and
\begin{equation} \label{estcheck}
\|\tilde\sigma(x) - \check\sigma(x)\|_{L_2(0,\pi)} \le C\delta_1, \quad |\tilde c_n - \check c_n| \le C\delta_1, \quad |\tilde c_n - \check c_n| \le C\delta_1, \quad n = \overline{0, p},
\end{equation}
where $C$ depends only on $\tilde L$.

Next, we perturb the infinite part of the spectral data, which contains only simple eigenvalues. Let us apply Theorem~\ref{stability_thm_m} to the problem $\check L$ instead of $\tilde L$ and to arbitrary data $\{ \la_n, \alpha_n \}_{n \ge 1}$ satisfying the conditions
\begin{gather} \nonumber
\la_n = \check \la_n, \quad \alpha_n = \check \alpha_n, \quad n < N, \\ \label{checkrho}
\delta_2 := \left( \sum_{n = N}^{\infty} (|\check \rho_n - \rho_n| + |\check \alpha_n - \alpha_n|)^2\right)^{\frac{1}{2}} \le \delta_0
\end{gather}
for sufficiently small $\delta_0$. Then, $\{ \la_n, \alpha_n \}_{n \ge 1}$ are the spectral data of some problem $L = L(\sigma, r_1, r_2)$ and
\begin{equation} \label{Lest}
\|\check\sigma(x) - \sigma(x)\|_{L_2(0,\pi)} \le C\delta_2, \quad |\check c_n - c_n| \le C\delta_2, \quad |\check c_n - c_n| \le C\delta_2, \quad n = \overline{0, p}.
\end{equation}
Note that the choice of $\delta_0$ in \eqref{checkrho} and of the constant $C$ in \eqref{Lest} depends on $\check L$. However, it follows from the proof of Theorem~\ref{stability_thm_m} in \cite{ChitBondStab} that one can choose fixed $\delta_0$ and $C$ for any $\check L$, whose coefficients are sufficiently close to the ones of $\tilde L$ in the sense of the estimate \eqref{estcheck}.
So, we get from \eqref{estcheck} and \eqref{Lest} that
$$
\|\tilde\sigma(x) - \sigma(x)\|_{L_2(0,\pi)} \le C\delta, \quad |\tilde c_n - c_n| \le C\delta, \quad |\tilde c_n - c_n| \le C\delta, \quad n = \overline{0, p},
$$
where $\delta = \delta_1 + \delta_2$ and $C$ depends only on $\tilde L$. This 
yields the claim of Theorem~\ref{stability_thm}.

\section{Generalized Cauchy data} \label{sec:cauchy}

In this section, we provide the proof of Theorem~\ref{cauchy_thm} on local solvability and stability of Inverse Problem~\ref{ip:c}. In order to prove this theorem, we use reduce of the inverse problem by the generalized Cauchy data to the one by the spectral data $\{ \hat M_N(\la) \}_{\la \in \Gamma_N} \cup \{ \la_n, \alpha_n\}_{n \ge N}$. After that, we can apply our results for Inverse Problem~\ref{ip:main3} which are presented in Theorem~\ref{stability_thm}.

Using \eqref{delta_0}-\eqref{delta_1} and Rouche's Theorem, we prove the following lemma:

\begin{lem} \label{lem:asympt}
Let $G(t), J(t)$ be arbitrary functions of $L_2(0, \pi)$, $C_0, \dots, C_{p}, D_0, \dots, D_{p-1} \in \mathbb C$ be arbitrary complex numbers. Let the functions $\Delta_0(\la)$ and $\Delta_1(\la)$ be constructed by formulae \eqref{delta_0} and \eqref{delta_1}, respectively. Then, for $j = 0, 1$, the function $\Delta_j(\la)$ has the set of zeros $\{\theta_{nj}\}_{n \ge 1}$, which are numbered so that $|\theta_{nj}| \le |\theta_{n+1,j}|$ and have the asymptotic behavior
$$
\sqrt{\theta_{nj}} = n - p - \dfrac{j + 1}{2} + \varkappa_{nj}, \quad \{\varkappa_{nj}\} \in l_2.
$$
\end{lem}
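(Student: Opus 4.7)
The plan is to analyze each characteristic function $\Delta_j(\la)$ by comparing it with its principal part, namely $\rho^{2p}\cos\rho\pi$ for $j=0$ and $-\rho^{2p+1}\sin\rho\pi$ for $j=1$. Fix $j$ and rewrite $\Delta_j(\la) = \rho^{2p+j} h_j(\rho) + R_j(\rho)$, where $h_0(\rho) := \cos\rho\pi + \int_0^\pi J(t)\cos\rho t\,dt$, $h_1(\rho) := -\sin\rho\pi + \int_0^\pi G(t)\sin\rho t\,dt$, and $R_j(\rho)$ is the polynomial remainder of degree $\le 2p$ in $\rho$. For $\rho$ on the contours $|\rho| = n - p - \tfrac{j+1}{2} + \tfrac{1}{2}$ with $n$ large, standard estimates for $\cos\rho\pi$ and $\sin\rho\pi$ give $|h_j(\rho)| \ge c\,e^{|\operatorname{Im}\rho|\pi}/|\rho|^0$ uniformly (using the Riemann--Lebesgue-type bound $|\int_0^\pi J(t)\cos\rho t\,dt|\to 0$ and its analogue with exponential growth controlled by $e^{|\operatorname{Im}\rho|\pi}$), while $|R_j(\rho)| = O(|\rho|^{2p})$ is dominated by $|\rho^{2p+j} h_j(\rho)|$. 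Hence Rouché's theorem applied to these contours yields that $\Delta_j$ has the same number of zeros inside as $\rho^{2p+j}\cos\rho\pi$ (or $\rho^{2p+j}\sin\rho\pi$), which after sorting by modulus produces a sequence $\{\theta_{nj}\}_{n\ge1}$ with $\sqrt{\theta_{nj}} = n - p - \tfrac{j+1}{2} + o(1)$.

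Next I would refine $o(1)$ to the $l_2$ estimate. Denote $\rho_n := \sqrt{\theta_{nj}}$ and set $\varkappa_{nj} := \rho_n - (n - p - \tfrac{j+1}{2})$; by the Rouché argument $\varkappa_{nj} = o(1)$, so for large $n$ the zero $\rho_n$ lies in a small disk around the unperturbed zero. Substituting into the equation $\Delta_j(\rho_n^2) = 0$ and dividing by $\rho_n^{2p+j}$, one obtains, for $j=0$,
\begin{equation*}
\cos\rho_n\pi + \int_0^\pi J(t)\cos\rho_n t\,dt + O(\rho_n^{-2}) = 0,
\end{equation*}
and an analogous identity with $\sin$ for $j=1$. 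Using $\cos\rho_n\pi = \pm\sin(\pi\varkappa_{n0})$ (resp.\ $\sin\rho_n\pi = \pm\sin(\pi\varkappa_{n1})$) and expanding to first order, this rearranges to
\begin{equation*}
\varkappa_{nj} = \frac{(-1)^{?}}{\pi}\int_0^\pi F_j(t)\cos\bigl((n-p-\tfrac{j+1}{2})t\bigr)dt + O(\varkappa_{nj}^2) + O(n^{-2}),
\end{equation*}
with $F_j \in L_2(0,\pi)$ (being $J$ or $G$ up to a trigonometric factor). The leading term is a Fourier coefficient of an $L_2$ function with respect to a Riesz basis of sines/cosines on $(0,\pi)$, so by Bessel's inequality its sequence lies in $l_2$. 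Consequently $\{\varkappa_{nj}\} \in l_2$.

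The first two ingredients (rough Rouché counting and substitution to get the linearized relation) are essentially routine once the representations \eqref{delta_0}--\eqref{delta_1} are at hand. The main obstacle is the \emph{uniform} lower bound on $|h_j(\rho)|$ on the chosen contours in the complex plane, since the naive Riemann--Lebesgue bound is only valid on the real axis: for complex $\rho$ one needs the sharper estimate
\begin{equation*}
\Bigl|\int_0^\pi J(t)\cos\rho t\,dt\Bigr| \le \varepsilon(|\rho|)\, e^{|\operatorname{Im}\rho|\pi},\qquad \varepsilon(|\rho|)\to 0,
\end{equation*}
which follows from the density of smooth functions in $L_2$ and the elementary bound for smooth $J$ via integration by parts. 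Combining this with $|\cos\rho\pi|\ge C e^{|\operatorname{Im}\rho|\pi}$ away from the zeros of $\cos\rho\pi$ produces the dominance needed for Rouché and, in the refinement step, ensures that the $O(\varkappa_{nj}^2)$ and $O(n^{-2})$ error terms are genuinely $l_2$-summable, completing the proof.
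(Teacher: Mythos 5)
Your proposal is correct and takes essentially the same route as the paper, which proves this lemma only in outline (``using \eqref{delta_0}--\eqref{delta_1} and Rouch\'e's Theorem''): comparison with the leading terms $\rho^{2p}\cos\rho\pi$ and $-\rho^{2p+1}\sin\rho\pi$ on circles halfway between unperturbed zeros, followed by substitution and Bessel's inequality to upgrade $o(1)$ to $l_2$, is exactly the intended argument, and you correctly identify the key technical ingredient (the bound $\varepsilon(|\rho|)e^{|\operatorname{Im}\rho|\pi}$ for the transforms of $L_2$ functions off the real axis). The only point to tighten in a full writeup is the linearization: replacing $\cos(\rho_n t)$ by $\cos\bigl((n-p-\tfrac{j+1}{2})t\bigr)$ perturbs the integral term by $\varkappa_{nj}$ times a Fourier coefficient of $tF_j(t)$, i.e.\ by $o(1)\cdot\varkappa_{nj}$ rather than $O(\varkappa_{nj}^2)$, which is still absorbable and does not affect the conclusion.
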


Let $\{ G(t), J(t), C_0, \dots, C_p, D_0, \dots, D_{p-1} \}$ be arbitrary data satisfying the hypothesis of Lemma~\ref{lem:asympt} (not necessarily being the generalized Cauchy data of a certain boundary value problem $L$). Then, we can construct $\Delta_0(\la)$ and $\Delta_1(\la)$ by \eqref{delta_0} and \eqref{delta_1}, respectively, and define 
$$
M(\la) := -\dfrac{\Delta_0(\la)}{\Delta_1(\la)}.
$$

Next, for brevity, put $\theta_n := \theta_{n1}$ for $n \ge 1$.
Introduce the following values similarly to weight numbers:
$$
\alpha_{n+k} := \mathop{\mathrm{Res}}_{\la=\theta_n} (\la - \theta_n)^k M(\la), \quad n \in I, \quad k = \overline{0, m_n-1},
$$
where $m_n$ is the multiplicity of $\theta_n$ and $I$ is the set of the indexes of the distinct values among $\{ \theta_n \}_{n \ge 1}$. So, we can define the index $N$ and the contour $\Gamma_N$ for the data $\{ \tilde G(t), \tilde J(t), \tilde C_0, \dots, \tilde C_p, \tilde D_0, \dots, \tilde D_{p-1} \}$ according to Notation~\ref{not:GammaN}. Using these definitions, we formulate the following lemma.

\begin{lem} \label{lem:Cauchy}
Let $\tilde G(t)$, $\tilde J(t) \in L_2(0, \pi)$ be fixed complex-valued functions and $\tilde C_0, \dots, \tilde C_{p}, \tilde D_0, \dots, \tilde D_{p-1}$ be fixed complex numbers. Then, there exists $\varepsilon > 0$ such that, for all $G(t), J(t) \in L_2(0, \pi)$, $C_0, \dots, C_{p}, D_0, \dots, D_{p-1} \in \mathbb C$, satisfying the condition \eqref{difCauchy} from Theorem~\ref{cauchy_thm}, the points $\{\theta_n\}_{n=1}^{N-1}$ are located inside $\Gamma_N$, $m_n = 1$ for $n \ge N$, and
\begin{gather} \label{hatM}
	\max\limits_{\la\in\Gamma_N}|\tilde M(\la) - M(\la)| \le C\delta, \\ \label{sumde} 
\Big(\sum\limits_{n=N}^{\infty}(|\sqrt{\theta_n} - \sqrt{\tilde\theta_n}| + |\alpha_n - \tilde\alpha_n|)^2\Big)^{\frac{1}{2}}\le C\delta.
\end{gather}
\end{lem}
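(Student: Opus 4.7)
The plan is to derive uniform upper bounds on the perturbations $\hat\Delta_j := \Delta_j - \tilde\Delta_j$, $j = 0, 1$, from the explicit representations \eqref{delta_0}--\eqref{delta_1}, and then transfer these bounds to the zeros of $\Delta_1$, to the Weyl function $M(\la)$, and to the weight numbers $\{\alpha_n\}$. Throughout, $\rho = \sqrt{\la}$.

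\textbf{Step 1: uniform bounds on $\hat\Delta_j$.} Subtracting \eqref{delta_1} for the two problems yields
\begin{equation*}
\hat\Delta_1(\la) = \rho^{2p+1}\int_0^\pi (G - \tilde G)(t)\sin\rho t\,dt + \sum_{n=0}^p (C_n - \tilde C_n)\rho^{2n},
\end{equation*}
and an analogous formula for $\hat\Delta_0$. Applying Cauchy-Schwarz to the integral and estimating the polynomial part via \eqref{difCauchy}, I obtain
\begin{equation*}
|\hat\Delta_0(\la)|\le C\delta\,|\rho|^{2p}\exp(|\Im\rho|\pi),\qquad |\hat\Delta_1(\la)|\le C\delta\,|\rho|^{2p+1}\exp(|\Im\rho|\pi),
\end{equation*}
valid in any fixed horizontal strip $|\Im\rho|\le h$.

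\textbf{Step 2: location and simplicity of the zeros.} Standard Sturm-Liouville estimates combined with Lemma~\ref{lem:asympt} applied to $\tilde L$ give lower bounds $|\tilde\Delta_j(\la)| \ge c|\rho|^{2p+j}\exp(|\Im\rho|\pi)$ on $\Gamma_N$ and on the circles $|\la - \tilde\theta_n| = c/n$ for $n\ge N$. Combining these with Step 1 and choosing $\delta_0$ small enough, I get $|\hat\Delta_1(\la)| < |\tilde\Delta_1(\la)|$ on all these contours. Rouché's theorem then shows that $\Delta_1$ has exactly $N-1$ zeros inside $\Gamma_N$ (so $\{\theta_n\}_{n=1}^{N-1} \subset \mbox{int}\,\Gamma_N$) and exactly one simple zero $\theta_n$ inside each small circle around $\tilde\theta_n$ for $n\ge N$, which establishes the first two assertions of the lemma.

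\textbf{Step 3: estimate \eqref{hatM}.} The algebraic identity
\begin{equation*}
M(\la) - \tilde M(\la) = \frac{\tilde\Delta_0(\la)\hat\Delta_1(\la) - \hat\Delta_0(\la)\tilde\Delta_1(\la)}{\Delta_1(\la)\tilde\Delta_1(\la)},
\end{equation*}
together with Step 1 and the lower bounds on $|\tilde\Delta_1(\la)|$ and $|\Delta_1(\la)|$ on $\Gamma_N$, immediately yields \eqref{hatM}.

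\textbf{Step 4 (main obstacle): the $l_2$ estimate \eqref{sumde}.} This is the technical heart of the proof. For $n\ge N$, Taylor expansion of $\Delta_1 = \tilde\Delta_1 + \hat\Delta_1$ around the simple zero $\tilde\theta_n$ and division by $2\sqrt{\tilde\theta_n}$ give
\begin{equation*}
\sqrt{\theta_n} - \sqrt{\tilde\theta_n} = -\frac{\hat\Delta_1(\tilde\theta_n)}{2\sqrt{\tilde\theta_n}\,\tilde\Delta_1'(\tilde\theta_n)} + (\text{higher-order terms}).
\end{equation*}
Since $|\tilde\Delta_1'(\tilde\theta_n)| \sim n^{2p}$ by the asymptotics in Lemma~\ref{lem:asympt} together with $\sin\sqrt{\tilde\theta_n}\pi = O(1/n)$, the decisive step is to show that the sequence
\begin{equation*}
a_n := \int_0^\pi (G - \tilde G)(t)\sin\sqrt{\tilde\theta_n}\,t\,dt
\end{equation*}
is $l_2$ with norm $\le C\|G - \tilde G\|_{L_2} \le C\delta$. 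This follows from the Bessel-type inequality for systems close to a Riesz basis, applied to $\{\sin\sqrt{\tilde\theta_n}\,t\}_{n\ge N}$, which is an $l_2$-perturbation of the orthogonal system $\{\sin(n-p-1)t\}$ thanks to $\sqrt{\tilde\theta_n} = n - p - 1 + \tilde\varkappa_n$ with $\{\tilde\varkappa_n\}\in l_2$. All remaining contributions (polynomial part of $\hat\Delta_1$, the $O(1/n)$ discrepancy between $\sqrt{\tilde\theta_n}$ and $n-p-1$, the higher-order Taylor terms) are bounded coefficientwise by $C\delta/n$ and so contribute at most $C\delta$ to the $l_2$ norm. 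The analogous estimate for $\{\alpha_n - \tilde\alpha_n\}$ comes from the residue formula $\alpha_n = -\Delta_0(\theta_n)/\Delta_1'(\theta_n)$ at simple zeros, differentiating and expanding as above, and applying the parallel Bessel-type argument to $(J - \tilde J)$ against $\{\cos\sqrt{\tilde\theta_n}\,t\}$. Careful bookkeeping of all these error terms so that they aggregate into an $l_2$ sequence of norm $O(\delta)$ is the technically demanding part.
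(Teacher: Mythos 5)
Your overall route coincides with the paper's: bound $\hat\Delta_j:=\Delta_j-\tilde\Delta_j$ directly from the representations \eqref{delta_0}--\eqref{delta_1}, locate the zeros by Rouch\'e's theorem, obtain \eqref{hatM} from the algebraic identity for the difference of the Weyl functions, and derive \eqref{sumde} from a Taylor expansion of $\tilde\Delta_1$ at its simple zeros combined with Bessel-type inequalities for the systems $\{\sin\sqrt{\tilde\theta_n}\,t\}$ and $\{\cos\sqrt{\tilde\theta_n}\,t\}$ (the paper reduces these to the classical orthogonal systems $\sin nt$, $\cos nt$ and absorbs the discrepancy into an $l_2$ error term $C\delta\varkappa_n$, which is the same device). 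The residue formula $\alpha_n=-\Delta_0(\theta_n)/\Delta_1^{\langle 1\rangle}(\theta_n)$ and the bookkeeping you describe in Step~4 are exactly the paper's Steps~2 and~3.

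There is, however, one step that fails as written: the Rouch\'e comparison for $n\ge N$ on the circles $|\la-\tilde\theta_n|=c/n$. Since $\tilde\Delta_1(\tilde\theta_n)=0$, on such a circle one only has $|\tilde\Delta_1(\la)|\asymp|\tilde\Delta_1^{\langle 1\rangle}(\tilde\theta_n)|\cdot|\la-\tilde\theta_n|\asymp n^{2p}\cdot n^{-1}=n^{2p-1}$, not the claimed $c|\rho|^{2p+1}e^{|\Im\rho|\pi}\asymp n^{2p+1}$ (that lower bound cannot hold on a contour shrinking onto a zero of $\tilde\Delta_1$); meanwhile $|\hat\Delta_1(\la)|$ can be as large as $C\delta n^{2p+1}$ there, e.g.\ when $\hat G$ is concentrated on the $n$-th sine harmonic. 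The inequality $|\hat\Delta_1|<|\tilde\Delta_1|$ would then force $\delta\lesssim n^{-2}$ and cannot hold for all $n\ge N$ with one fixed $\delta_0$. The fix is the one the paper uses: pass to the $\rho=\sqrt{\la}$ plane and take circles $\gamma_n=\{\rho:|\rho-\tilde\nu_n|=r\}$ of \emph{fixed} radius $r\le|\tilde\nu_n-\tilde\nu_{n+1}|/2$ (equivalently, circles of radius $\asymp n$ in the $\la$-plane), on which $|\tilde\Delta_1(\rho^2)|\ge c_r n^{2p+1}$ while $|\hat\Delta_1(\rho^2)|\le C\delta n^{2p+1}$, so Rouch\'e applies uniformly in $n$ once $\delta_0<c_r/C$. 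With this correction your Steps~3 and~4 go through and the argument matches the paper's.
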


\begin{proof}
Let $\{ \tilde G(t), \tilde J(t), \tilde C_0, \dots, \tilde C_p, \tilde D_0, \dots, \tilde D_{p-1} \}$ satisfy the hypothesis of the lemma, and let $\{ G(t), J(t), C_0, \dots, C_p, D_0, \dots, D_{p-1} \}$ fulfill the inequality \eqref{difCauchy} with some $\varepsilon > 0$. Throughout this proof, the constants $C$ in estimates depend only on $\{ \tilde G(t), \tilde J(t), \tilde C_0, \dots, \tilde C_p, \tilde D_0, \dots, \tilde D_{p-1} \}$ and $\varepsilon$.
We divide the proof into three steps.

\smallskip

\textit{Step 1. Behavior of $\{ \theta_n\}_{n = 1}^{N-1}$}. Clearly, the functions $\Delta_0(\la)$ and $\Delta_1(\la)$ defined via \eqref{delta_0}--\eqref{delta_1} are entire in $\la$. So, relying on Notation~\ref{not:GammaN} and on the relations \eqref{delta_0} and \eqref{difCauchy}, we get the estimates
\begin{equation} \label{estD1}
|\tilde\Delta_1(\la)| \ge C, \quad |\tilde \Delta_1(\la) - \Delta_1(\la)| \le C \delta, \quad \la\in\Gamma_N.
\end{equation}

Hence, if $\varepsilon > 0$ is sufficiently small, then
$$
\dfrac{|\tilde\Delta_1(\la) - \Delta_1(\la)|}{|\tilde\Delta_1(\la)|} \le C\delta \le C\varepsilon < 1, \quad \la\in\Gamma_N.
$$
Below in this proof, we assume that $\varepsilon$ is so small that all the proved estimates hold. Consequently, by Rouche's Theorem, $\Delta_1(\la)$ has the same number of zeros (counting with multiplicities) inside $\Gamma_N$ as $\tilde\Delta_1(\la)$ has. Obviously, these zeros are $\{\theta_n\}_{n=1}^{N-1}$.

It can be also shown that
\begin{equation} \label{estD2}
\dfrac{|\tilde\Delta_0(\la) - \Delta_0(\la)|}{|\tilde\Delta_1(\la)|}\le C\delta, \quad |\Delta_1(\la)| \ge C, \quad  \la \in \Gamma_N.
\end{equation}
Using the estimates \eqref{estD1} and \eqref{estD2}, we obtain
$$
|\tilde M(\la) - M(\la)| = \Bigg|\dfrac{\Delta_0(\la)}{\Delta_1(\la)} - \dfrac{\tilde\Delta_0(\la)}{\tilde\Delta_1(\la)}\Bigg| 
 \le C\delta, \quad \la \in \Gamma_N.
$$

\smallskip

\textit{Step 2. Behavior of $\{ \theta_n \}_{n \ge N}$}.
Put $\tilde \nu_n = \sqrt{\tilde\theta_n}$, $\arg \tilde\nu_n \in (-\tfrac{\pi}{2}, \tfrac{\pi}{2}]$.
For $n \ge N$, consider the contours $\gamma_n = \{\rho \in \mathbb C\colon |\rho - \tilde\nu_n| = r\}$, where $r \le \dfrac{|\tilde\nu_n - \tilde\nu_{n+1}|}{2}$ is fixed.

Let $\rho_0 \in \gamma_n$, so $\rho_0 = \tilde\nu_n + re^{i\varphi}$, where $\varphi \in [0, 2\pi]$. In view of Lemma~\ref{lem:asympt} 
$$
\tilde\nu_n = n\big(1 + O(n^{-1})\big), \quad n \ge N.
$$
It can be shown that $|\sin\rho_0t| \le C$. Using the latter estimate together with \eqref{delta_1} and \eqref{difCauchy}, we derive
\begin{gather*}
|\tilde\Delta_1(\rho^2) - \Delta_1(\rho^2)| \le \big|\rho^{2p+1}\big|\Bigg|\int\limits_{0}^{\pi} {(\tilde G(t) - G(t))\sin\rho t}dt \Bigg| + \sum\limits_{n=0}^{p}|\tilde C_n - C_n|\big|\rho^{2n}\big| \le \\
Cn^{2p+1}\|\tilde G(t) - G(t)\|_{L_2(0, \pi)} + Cn^{2p}\delta \le Cn^{2p+1}\delta, \quad \rho \int \mbox{int}\, \gamma_n.
\end{gather*}

Let us introduce the contour $\gamma^1_n = \{\rho \in \mathbb C\colon |\rho - \tilde \nu_n| = r_1\}$, where $r_1 < r$.
Using Taylor's Theorem, we can get for $\rho \in \mbox{int}\,\gamma^1_n$ that
$$
\tilde\Delta_1(\rho^2) = \tilde\Delta_1(\tilde\nu_n^2) + \dfrac{d}{dz}\tilde\Delta_1(z^2)\Big|_{z=\tilde\nu_n}(\rho-\tilde\nu_n) + R_n(\rho),
$$
where
$$
R_n(\rho) = \dfrac{1}{2\pi i}(\rho - \tilde\nu_n)^2\oint\limits_{\gamma_n} {\dfrac{\tilde\Delta_1(z^2)}{(z-\tilde\nu_n)^2(z - \rho)}}dz.
$$

Using direct calculations, we obtain the estimate 
$$
\Bigg| \dfrac{1}{2\pi i}\oint\limits_{\gamma_n} {\dfrac{\tilde\Delta_1(z^2)}{(z-\tilde\nu_n)^2(z - \rho)}}dz \Bigg| \le Cn^{2p+1}, \quad \rho \in \mbox{int} \, \gamma_n^1,
$$
and, consequently,
$$
R_n(\rho) = O(n^{2p+1}(\rho - \tilde\nu_n)^2), \quad \rho \in \mbox{int}\,\gamma_n^1.
$$

Then, we get:
\begin{align} \notag
\tilde\Delta_1(\nu_n^2) - \Delta_1(\nu_n^2) &= \nu_n^{2p+1}\int\limits_{0}^{\pi} {(\tilde G(t) - G(t))\sin\nu_n t}dt + \sum\limits_{k=0}^{p}(\tilde C_k - C_k)\nu_n^{2n} = \\ \label{for_diff_nu}
&\dfrac{d}{dz}\tilde\Delta_1(z^2)\Bigg|_{z=\tilde\nu_n}(\nu_n-\tilde\nu_n) + O(n^{2p+1}(\nu_n - \tilde\nu_n)^2), \quad n \to \infty.
\end{align}

Using direct calculations, we obtain the estimates
\begin{gather*}
\Bigg|\dfrac{d}{dz}\tilde\Delta_1(z^2)\Big|_{z=\tilde\nu_n}\Bigg| \ge Cn^{2p+1}, \\
\Bigg|\int\limits_{0}^{\pi} {(\tilde G(t) - G(t))\sin\nu_n t}dt\Bigg| \le C|\hat G_n| + C\delta\varkappa_n,
\end{gather*}
where $\hat G_n = \int\limits_{0}^{\pi}\hat G(t)\sin nt dt$, $\{ \varkappa_n \} \in l_2$, and $\| \{ \varkappa_n\} \|_{l_2} \le C$.
So, we get from \eqref{for_diff_nu}:
$$
|\nu_n - \tilde\nu_n| \le C|\hat G_n| + C\delta\varkappa_n + \dfrac{C\delta}{n} + \dots + \dfrac{C\delta}{n^{2p-1}}, \quad n \ge N.
$$

Using Bessel's inequality, we obtain
$$
\Big(\sum\limits_{n=N}^{\infty} |\hat G_n|^2\Big)^{\frac{1}{2}} \le C\delta.
$$

Hence
\begin{gather}\label{neq_nu}
\Big(\sum\limits_{n=N}^{\infty} |\nu_n - \tilde\nu_n|^2\Big)^{\frac{1}{2}} \le C\delta.
\end{gather}

\smallskip 

\textit{Step 3. Behavior of $\{ \alpha_n \}_{n \ge N}$}.
Next, we have that $\{\tilde\theta_n\}_{n\ge N}$ are simple poles of $\tilde M(\la)$, then
$$
\tilde\alpha_n = -\dfrac{\tilde\Delta_0(\tilde\theta_n)}{\tilde\Delta_1^{\langle 1 \rangle}(\tilde\theta_n)}, \quad n \ge N.
$$

Consequently,
\begin{gather} \label{alpha_diff}
\alpha_n - \tilde\alpha_n = \dfrac{\Delta_1^{\langle 1 \rangle}(\theta_n)(\tilde\Delta_0(\tilde\theta_n) - \Delta_0(\theta_n)) + \Delta_0(\theta_n)(\Delta_1^{\langle 1 \rangle}(\theta_n) - \tilde\Delta_1^{\langle 1 \rangle}(\tilde\theta_n))}{\tilde\Delta_1^{\langle 1 \rangle}(\tilde\theta_n)\Delta_1^{\langle 1 \rangle}(\theta_n)}.
\end{gather}

Using direct calculations, we can get the following estimates:
\begin{gather} \label{cos_diff}
|\cos\tilde\nu_n\pi - \cos\nu_n\pi| \le C|\tilde\nu_n - \nu_n|, \\ \label{sin_diff}
|\sin\tilde\nu_n\pi - \sin\nu_n\pi| \le C|\tilde\nu_n - \nu_n|.
\end{gather}

From \eqref{delta_1}, we obtain
\begin{align} \nonumber
\tilde\Delta_1^{\langle 1 \rangle}(\tilde\theta_n) - \Delta_1^{\langle 1 \rangle}(\theta_n) &= -\dfrac{\pi}{2}(\tilde\nu_n^{2p}\cos\tilde\nu_n\pi - \nu_n^{2p}\cos\nu_n\pi) + \\ \nonumber
&\dfrac{1}{2}\Big(\tilde\nu_n^{2p}\int\limits_{0}^{\pi} t\tilde G(t)\cos\tilde\nu_n t \, dt - \nu_n^{2p}\int\limits_{0}^{\pi} tG(t)\cos\nu_n t \, dt\Big) - \\ \nonumber
& \Big(p + \dfrac{1}{2}\Big)(\tilde\nu_n^{2p-1}\sin\tilde\nu_n\pi - \nu_n^{2p-1}\sin\nu_n\pi) + \\ \nonumber
& \Big(p + \dfrac{1}{2}\Big)\Big(\tilde\nu_n^{2p-1}\int\limits_{0}^{\pi} \tilde G(t)\sin\tilde\nu_n t \, dt - \nu_n^{2p-1}\int\limits_{0}^{\pi} G(t)\sin\nu_n t \, dt\Big) + \\ \label{difDeltap}
& \sum\limits_{k=0}^{p-1}(p-k)(\tilde C_k\tilde\nu_n^{2(k-1)} - C_k\nu_n^{2(k-1)}).
\end{align}

Using \eqref{difCauchy}, \eqref{cos_diff}, \eqref{sin_diff}, and \eqref{difDeltap}, we deduce
\begin{gather}\label{der_delta_1_diff}
|\tilde\Delta_1^{\langle 1 \rangle}(\tilde\theta_n) - \Delta_1^{\langle 1 \rangle}(\theta_n)| \le Cn^{2p}|\tilde\nu_n - \nu_n| + Cn^{2p}|\hat L_n| + Cn^{2p-1}|\hat P_n| + Cn^{2p - 2}\delta,
\end{gather}
where $\hat L_n := \int\limits_{0}^{\pi} t\hat G(t)\cos\tilde\nu_n t \, dt$, $\hat P_n := \int\limits_{0}^{\pi} \hat G(t)\sin\tilde\nu_n t \, dt$.

In the same way, we can get
\begin{gather}\label{delta_0_diff}
|\tilde\Delta_0(\tilde\theta_n) - \Delta_0(\theta_n)| \le Cn^{2p}|\tilde\nu_n - \nu_n| + Cn^{2p}|\hat S_n| + Cn^{2p - 2}\delta,
\end{gather}
where $\hat S_n = \int\limits_{0}^{\pi} \hat J(t)\cos\tilde\nu_n t \, dt$.

Substituting \eqref{der_delta_1_diff}-\eqref{delta_0_diff} into \eqref{alpha_diff}, we obtain
$$
|\alpha_n - \tilde\alpha_n| \le C|\tilde\nu_n - \nu_n| + C|\hat S_n| + C|\hat L_n| + \dfrac{C|\hat P_n|}{n} + \dfrac{C\delta}{n^2}, \quad n \ge N.
$$

According to Bessel's inequalities for $\hat S_n$, $\hat L_n$, and $\hat P_n$, and the inequality \eqref{neq_nu}, we get the estimate
\begin{gather*}
\Big(\sum\limits_{n=N}^{\infty} |\alpha_n - \tilde\alpha_n|^2\Big)^{\frac{1}{2}} \le C\delta,
\end{gather*}
which concludes the proof.
\end{proof}

Using technical calculations, one can easily show that the estimates \eqref{hatM} and \eqref{sumde} together imply $\max\limits_{\la \in \Gamma_N} |\hat M_N(\la)| \le C \delta$.
Consequently, Lemma~\ref{lem:Cauchy} and Theorem~\ref{stability_thm} immediately yield Theorem~\ref{cauchy_thm}.

\medskip

\textbf{Funding}: This work was supported by Grant 21-71-10001 of the Russian Science Foundation, https://rscf.ru/en/project/21-71-10001/.






\medskip

\noindent Egor Evgenevich Chitorkin \\
1. Institute of IT and Cybernetics, Samara National Research University, \\
Moskovskoye Shosse 34, Samara 443086, Russia, \\
2. Department of Mechanics and Mathematics, Saratov State University, \\
Astrakhanskaya 83, Saratov 410012, Russia, \\
e-mail: {\it chitorkin.ee@ssau.ru} \\

\noindent Natalia Pavlovna Bondarenko \\
1. Department of Mechanics and Mathematics, Saratov State University, \\
Astrakhanskaya 83, Saratov 410012, Russia, \\
2. Department of Applied Mathematics and Physics, Samara National Research University, \\
Moskovskoye Shosse 34, Samara 443086, Russia, \\
3. Peoples' Friendship University of Russia (RUDN University), \\
6 Miklukho-Maklaya Street, Moscow, 117198, Russia, \\
e-mail: {\it bondarenkonp@info.sgu.ru}
\end{document}